\documentclass[english]{amsart}
\usepackage{amsmath,amssymb,amscd,amsfonts,mathrsfs}
\usepackage[alphabetic]{amsrefs}
\usepackage{tabularx}
\usepackage{latexsym}
\usepackage[all,cmtip,line]{xy}
\usepackage{enumitem}
\usepackage[only,llbracket,rrbracket]{stmaryrd}
\usepackage{todonotes}
\usepackage{comment}





\DeclareSymbolFont{euletters}{U}{eur}{m}{n}
\DeclareSymbolFont{eufrakletters}{U}{euf}{m}{n}
\DeclareFontFamily{U}{wncy}{}
    \DeclareFontShape{U}{wncy}{m}{n}{<->wncyr10}{}
    \DeclareSymbolFont{mcy}{U}{wncy}{m}{n}
    \DeclareMathSymbol{\Sha}{\mathord}{mcy}{"58}

\newcommand{\id}{\text {\rm id}}

\newcommand{\Hom}{\text {\rm Hom}}

\newcommand{\Ext}{\text{\rm Ext}}

\newcommand{\Res}{\text{\rm Res}}

\newcommand{\Lie}{\text {\rm Lie\,}}
\newcommand{\Spec}{{\operatorname{Spec\,}}}
\DeclareMathOperator{\Sp}{\text{\rm Sp}}

\newcommand{\an}{{\text{\rm an}}}

\newcommand{\hyp}{\mathbb{H}}

\newcommand{\ord}{{\text{\rm ord}}}

\newcommand{\ed}{{\text{\rm ed}}}

\renewcommand{\Im}{\text {\rm Im }}

\newcommand{\PSL}{\text{\rm PSL}}
\newcommand{\PGL}{\text{ \rm PGL}}
\newcommand{\SL}{\text{\rm SL}}
\newcommand{\GL}{\text{\rm GL}}

\newcommand{\SO}{\text{\rm SO}}

\newcommand{\PU}{\text{\rm PU}}
\newcommand{\U}{\text{\rm U}}
\newcommand{\PSp}{\text{\rm PSp}}

\newcommand{\ev}{ev}

\renewcommand{\th}{^{\text{th}}}

\newcommand{\Gal}{\text{\rm Gal}}

\newcommand{\A}{\mathcal A}

\renewcommand{\AA}{\mathbb A}

\newcommand{\B}{\mathcal B}
\newcommand{\C}{\mathcal C}
\newcommand{\CC}{\mathbb C}

\newcommand{\E}{\mathcal E}

\DeclareMathSymbol{\fk}\mathord{eufrakletters}{"6B}

\newcommand{\F}{\mathbb F}

\renewcommand{\H}{\mathcal H}

\DeclareMathSymbol{\ei}\mathord{euletters}{"69}
\DeclareMathSymbol{\etau}\mathord{euletters}{"1C}
\DeclareMathSymbol{\eiota}\mathord{euletters}{"13}

\DeclareMathSymbol{\eK}\mathord{euletters}{"4B}

\renewcommand{\L}{\mathcal L}

\newcommand{\M}{\mathcal M}

\newcommand{\gm}{\mathfrak m}

\renewcommand{\O}{\mathcal O}

\newcommand{\Q}{\mathbb Q}

\newcommand{\RR}{\mathbb R}

\newcommand{\UU}{\mathscr U}

\newcommand{\Z}{\mathbb Z}
\newcommand{\ZZ}{\mathcal Z}

\newcommand{\et}{\text{\rm\'et}}

\newcommand{ \iso} {\overset \sim \longrightarrow}

\newcommand{\Aut}{\text {\rm Aut}}

\newcommand{\lps}{[\![}
\newcommand{\rps}{]\!]}
\newcommand{\Pb}{\mathbb{P}}

\newcommand{\Rb}{\mathbb{R}}
\DeclareMathOperator{\RD}{RD}

\DeclareMathOperator{\Stab}{Stab}

\DeclareMathOperator{\Mon}{Mon}



\setcounter{secnumdepth}{3}
\numberwithin{equation}{section}

\newtheorem{ithm}{Theorem}

\newtheorem{iprop}[ithm]{Proposition}
\newtheorem{iconj}[ithm]{Conjecture}

\newtheorem{athm}{Theorem}[section]

\newtheorem{acor}[athm]{Corollary}

\newtheorem{alemma}{Lemma}[athm]
\newtheorem{thm}{Theorem}[subsection]
\newtheorem{corollary}[thm]{Corollary}
\newtheorem{cor}[thm]{Corollary}

\newtheorem{lemma}[thm]{Lemma}

\newtheorem{prop}[thm]{Proposition}

 \theoremstyle{definition}
\newtheorem{defn}[thm]{Definition} 

\newtheorem{example}[thm]{Example}
\newtheorem{para}[thm]{\bf}

\newtheorem{question}[thm]{Question}{\bf}
{\bf}
\newcommand{\mar}[1]{\marginpar{\tiny #1}}
\theoremstyle{definition}

\newenvironment{customthm}[1]
{\innercustomthm}
{\endinnercustomthm}

\theoremstyle{definition}

\newtheorem{rem}[thm]{\bf Remark}
\newtheorem{remark}[thm]{\bf Remark}

\setcounter{tocdepth}{2}

\begin{document}

\title{Modular functions and resolvent problems}


\author[Benson Farb, Mark Kisin and Jesse Wolfson]{Benson Farb, Mark Kisin and Jesse Wolfson \\
\  \\
\ 
with an appendix by Nate Harman}
\address{ Department of Mathematics, University of Chicago}
\email{farb@math.uchicago.edu}
\address{ Department of Mathematics, Harvard }
\email{kisin@math.harvard.edu}
\address{ Department of Mathematics, University of California-Irvine}
\email{wolfson@uci.edu}
\address{ Department of Mathematics, University of Chicago}
\email{nateharman1234@gmail.com}


\thanks{The authors are partially supported by NSF grants DMS-1811772 and Jump Trading Mathlab Research Fund (BF), DMS-1601054 (MK) and
DMS-1811846 (JW)}


\begin{abstract} 

The link between modular functions and algebraic functions was a driving force behind the 19th century study of both.  Examples include the solutions by Hermite and Klein of the quintic via elliptic modular functions and the general sextic via level $2$ hyperelliptic functions.  This paper aims to apply modern arithmetic techniques to the circle of ``resolvent problems'' formulated and pursued by Klein, Hilbert and others.  As one example, we 
prove that the essential dimension at $p=2$ for the symmetric groups $S_n$ is equal to the essential dimension at $2$ of certain $S_n$-coverings defined using moduli spaces of principally polarized abelian varieties.  Our proofs use the deformation theory of abelian varieties in characteristic $p$, specifically Serre-Tate theory, as well as a family of remarkable mod $2$ symplectic $S_n$-representations constructed by Jordan.  As shown in an appendix by Nate Harman, the properties we need for such representations exist only in the $p=2$ case.

In the second half of this paper we introduce the notion of $\E$-versality as a kind of generalization of Kummer theory, and we prove that many congruence covers are $\E$-versal.  We use these $\E$-versality result to deduce the equivalence of Hilbert's 13th Problem (and related conjectures) with problems about congruence covers.

%
%

\end{abstract}



\maketitle
\tableofcontents

\section{Introduction}\label{sec:intro}
The link between modular functions and algebraic functions was a driving force behind the 19th century development of both. Examples include the solutions by Hermite and Klein of the quintic via elliptic modular functions, degree 7 and 8 equations with Galois group $\PSL_2(\F_7)$ via the level the level $7$ modular curve, the general sextic via level $2$ hyperelliptic functions, the $27$ lines on smooth cubic surfaces via level $3$, dimension $2$ abelian functions, and the $28$ bitangents on a smooth quartic via level $2$, dimension $3$ abelian functions.\footnote{See e.g. \cites{Klein8,KleinIcos,KleinLetter,Burkhardt1,Burkhardt2,Burkhardt3,KleinFricke,FrickeKlein,Fricke}, as well as \cites{KleinCW,KleinCW3}.}  With the Nazi destruction of the G\"ottingen research community this connection was largely abandoned, and the study of algebraic functions and resolvent problems, as pioneered by Klein, Hilbert and others, fell into relative obscurity.  The purpose of this paper to reconsider the link between modular functions and classical resolvent problems.  We do this from a modern viewpoint, using arithmetic techniques.

\medskip
\noindent
{\bf Essential dimension at \boldmath$p$ of modular functions.}  To fix ideas we work over $\CC$.  Recall that an {\em algebraic function} is a finite correspondence 
$X\dashrightarrow^{\!\!\!\!\!\!\!\!\!\!1:n} \Pb^1$; that is, a rational function $f: \tilde{X}\dashrightarrow \Pb^1$ on some (finite, possibly branched) cover $\tilde{X}\to X.$  \footnote{When the functions are understood, we denote an algebraic function simply by the cover $\tilde{X}\to X$.}  A fundamental example is the {\em general degree $n$ polynomial}, equivalently the cover 
\[
	\M_{0,n}\to \M_{0,n}/S_n,
\]
where $\M_{0,n}$ denotes the moduli space of $n$ distinct marked points in $\Pb^1$.  When $X$ is a locally symmetric variety $f$ is called a {\em modular function}.  A basic example is the cover  $\A_{g,N}\to A_g$ where $A_g$ is the (coarse) moduli space of principally polarized $g$-dimensional abelian varieties and $\A_{g,N}$ is the moduli of pairs $(A,\B)$ with $A\in\A_g$ and $\B$ a symplectic basis for $H_1(A;\Z/N\Z)$.  

The relationship between modular functions and the solutions of the general degree $n$ polynomial motivated Klein \cite{KleinIcos,KleinLetter}, 
Kronecker  \cite{Kronecker} and others to ask about the intrinsic complexity of these algebraic functions, as measured by the number of variables to which they can be reduced after a rational change of variables. In modern terms (as defined by Buhler-Reichstein, see e.g. \cite{ReICM}), the  {\em essential dimension}  $ed(\tilde{X}/X)\leq \dim(X)$ of an algebraic function is the smallest $d\geq 1$ so that $\tilde{X}\to X$ is the birational pullback of a cover $\tilde{Y}\to Y$ of $d$-dimensional varieties.    

One can also allow, in addition to rational changes of coordinates, the adjunction of radicals or other algebraic functions.  This is done by specifying a class $\E$ of covers 
under which  $\tilde{X}\to X$ can be pulled back before taking $\ed$ of the resulting cover.  This gives 
the essential dimension $\ed(\tilde{X}/X;\E)$ relative to the class $\E$ of ``accesory irrationalities''.  For example, if one fixes a prime $p$ and pulls back by covers of degree prime to $p,$ one obtains the notion of {\em essential dimension at $p$}, denoted $\ed(\tilde X/X; p)$ (see e.g. \cite{ReiYo}).  The idea of accessory irrationality was  central to the approaches of Klein and Hilbert to solving equations.  We axiomatize this notion in Definition~\ref{definition:accesory} below and explore its consequences in Section~\ref{s:equiv}.

The general degree $n$ polynomial is universal for covers with Galois group $S_n$, even allowing prime-to-$p$ accessory 
irrationalities; that is, for all $p\geq 2$ and for $\ed(S_n;p)$ defined as the maximum of 
$\ed(\tilde{X}/X;p)$ for all $S_n$-covers $\tilde{X}\to X$, we have:
\[\ed(\M_{0,n}/\M_{0,n};p)=\ed(S_n;p).\]

With the many examples relating the general degree $n$ polynomial to modular functions, it is natural to ask if the same ``maximal complexity property'' holds for modular functions.  Our first result states that for $p=2$ this is indeed the case.  To explain this, for a subgroup $G\subset\Sp_{2g}(\Z/N\Z)$ set $\A_{g,G}:=\A_{g,N}/G.$

\begin{ithm}\label{it:main} Let $n\geq 2,$ $g=\lceil \frac{n}{2}\rceil -1,$  and let $N \geq 3$ be odd. 
 There exists an embedding $S_n \subset \Sp_{2g}(\F_2) \subset \Sp_{2g}(\Z/2N\Z)$ such that 
$$ \ed(\A_{g,2N}/\A_{g,S_n}; 2) = \lfloor n/2\rfloor =  \ed(S_n; 2). $$
            \end{ithm}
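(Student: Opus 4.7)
The plan is to prove the two equalities in the display independently. The identity $\ed(S_n;2)=\lfloor n/2\rfloor$ will be taken as input: it is the theorem of Meyer--Reichstein, with the lower bound coming from Karpenko--Merkurjev applied to a Sylow $2$-subgroup of $S_n$ and the upper bound from classical Tschirnhaus transformations. The substantive work is to show
$$\ed(\A_{g,2N}/\A_{g,S_n};2)=\lfloor n/2\rfloor.$$

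The upper bound is almost formal: since by definition $\A_{g,S_n}=\A_{g,2N}/S_n$, the map $\A_{g,2N}\to\A_{g,S_n}$ is generically a Galois $S_n$-cover, and the essential $p$-dimension of any Galois $G$-cover is bounded by $\ed(G;p)$. Hence $\ed(\A_{g,2N}/\A_{g,S_n};2)\leq\ed(S_n;2)=\lfloor n/2\rfloor$.

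For the matching lower bound I would work $2$-adically via Serre--Tate theory. First, the embedding $S_n\hookrightarrow\Sp_{2g}(\F_2)$ is realized using Jordan's symplectic $S_n$-representation $V$ over $\F_2$ of dimension $2g$; by the appendix of Harman, the properties of $V$ being exploited here are genuinely peculiar to the prime $2$. Second, one locates an ordinary principally polarized abelian variety $(A_0,\lambda_0)$ over $\overline{\F}_2$ whose $2$-torsion $A_0[2]$, with its Weil pairing, is $S_n$-equivariantly isomorphic to $V$; this existence step is nontrivial and is most naturally addressed by producing $A_0$ as the distinguished member of a family with $S_n$-symmetry (e.g.\ a Jacobian of a suitable curve, or via an explicit Dieudonn\'e-module construction). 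Third, one invokes Serre--Tate to identify the formal deformation space $\widehat T$ of $(A_0,\lambda_0)$ with a formal torus of dimension $g(g+1)/2$ over $W(\overline{\F}_2)$ whose cocharacter lattice is canonically $\Sym^2 T_2A_0$. The $S_n$-action on $A_0[2]$ lifts uniquely to $T_2A_0$ and hence acts on $\widehat T$ through its action on $\Sym^2$.

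With these ingredients in place, a rational compression of $\A_{g,2N}\to\A_{g,S_n}$ of degree prime to $2$ produces, after completion along a lift of $(A_0,\lambda_0)$, an $S_n$-equivariant prime-to-$2$ correspondence from $\widehat T$ to a formal scheme of strictly smaller dimension. The fixed-point/incompressibility machinery of Karpenko--Merkurjev, applied to the Sylow $2$-subgroup of $S_n$ acting on the cocharacters $\Sym^2 V$, should then force the dimension of the target to be at least $\lfloor n/2\rfloor$. The main obstacle is precisely this last step: one must (a) choose the special point so that its formal neighborhood detects the full essential $2$-dimension of the cover, (b) ensure that compressions respect this neighborhood, and (c) carry out the representation-theoretic calculation on $\Sym^2 V$ that extracts the exact number $\lfloor n/2\rfloor$. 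Step (c), together with the existence statement for $(A_0,\lambda_0)$ realizing Jordan's representation on $2$-torsion, is where the deformation theory of abelian varieties in characteristic $p=2$ and the special nature of Jordan's representation truly enter the argument.
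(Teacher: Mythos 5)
Your upper bound is exactly the paper's: $\A_{g,2N}\to\A_{g,S_n}$ is generically an $S_n$-cover, so its $\ed$ at $2$ is at most $\ed(S_n;2)$, which equals $\lfloor n/2\rfloor$ by Meyer--Reichstein. The lower bound is where your route diverges from the paper's, and it has a gap that I believe is fatal as written. You propose to find an ordinary principally polarized $(A_0,\lambda_0)$ over $\overline{\F}_2$ whose $2$-torsion, with the Weil pairing, realizes Jordan's representation $V$ $S_n$-equivariantly, and then to run a fixed-point/incompressibility argument on the Serre--Tate torus with cocharacter lattice $\Sym^2 T_2A_0$. But in characteristic $2$ the $2$-torsion of an ordinary abelian variety is not a $\Z/2$-vector space: it sits in the connected-\'etale sequence $0\to A_0[2]^m\to A_0[2]\to A_0[2]^{\et}\to 0$, which is preserved by \emph{every} automorphism of $(A_0,\lambda_0)$. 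So any finite group acting on $A_0[2]$ must preserve the Lagrangian $A_0[2]^m$; since Jordan's $V$ is irreducible for the relevant $n$, no such $(A_0,\lambda_0)$ exists. Relatedly, the physical Tate module $T_2A_0$ of an ordinary abelian variety over $\overline{\F}_2$ has rank $g$, not $2g$, so the Serre--Tate cocharacter lattice only ever sees a $g$-dimensional representation, not $\Sym^2V$. Your step (c) is therefore aimed at the wrong representation-theoretic quantity, and your existence step (the second ingredient) cannot be carried out.

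The paper avoids this entirely by not asking for $S_n$ to act on any single fiber. Instead $S_n\subset\Sp_{2g}(\F_2)$ enters only through the \emph{monodromy} of $\A_{g,2N}\to\A_{g,N}$, and the relevant invariant is $\dim_{\F_2}\bigl(U(\F_2)\cap S_n\bigr)$, where $U$ is the unipotent radical of the Siegel parabolic singled out by the connected-\'etale sequence over the ordinary locus (Proposition~\ref{prop:monodromy}). At any ordinary point, the covering of the punctured formal neighborhood $\Spec A[1/2]$ has monodromy exactly $U(\F_2)$, and the lower bound $\ed\geq\dim_{\F_2}(U(\F_2)\cap G)$ comes from the nondegeneracy of the syntomic extension classes of $\A[2]$ (Lemmas~\ref{lem:nondeg} and~\ref{lem:span}, Proposition~\ref{prop:essdimAg}), not from Karpenko--Merkurjev. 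The arithmetic--geometric input is thus qualitatively different from a fixed-point argument, and the group theory that produces $\lfloor n/2\rfloor$ is the elementary computation of Proposition~\ref{p:di}: in Dickson's form there is a Lagrangian $W$ spanned by the vectors $e_{2i-1}+e_{2i}$, so that $(12),(34),\dots,(2\lfloor n/2\rfloor-1\ 2\lfloor n/2\rfloor)$ all lie in the unipotent radical of $\Stab(W)$, and these span a maximal elementary abelian $2$-subgroup of $S_n$. If you want to salvage a fixed-point approach, the place to look is the toroidal boundary (as in the Brosnan--Fakhruddin--Reichstein work cited in the paper), not an interior special point.
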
          
            
We remark that what we actually prove is the first equality. The second equality then 
follows from a result of Meyer-Reichstein \cite[Corollary 4.2]{MR}. In particular, one sees from their result that 
$\ed(S_n;p)$ takes its maximal value for $p=2,$ so this case is, in some sense, the most interesting. 

One ingredient in the proof of Theorem~\ref{it:main} comes from the link between binary forms and hyperelliptic functions; specifically,  Jordan proved that the monodromy of the $2$-torsion points on the universal hyperelliptic Jacobian gives a mod $2$ symplectic $S_n$-representation.   These remarkable representations were rediscovered and studied by Dickson \cite{Di}  in 1908.  We deduce Theorem~\ref{it:main} by applying the following general result to these representations.
\begin{ithm}\label{ithm:mainthm} Let $G$ be a finite group, and $G\to \Sp_{2g}(\F_p)$ a representation. If $U \subset \Sp_{2g}$ 
is the unipotent of a Siegel parabolic then 
	\[
		\ed(\A_{g,pN}/\A_{g,G};p)\ge \dim_{\F_p} G \cap U(\F_p)
	\]
\end{ithm}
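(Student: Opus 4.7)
The plan is to use Serre--Tate theory to identify, near a well-chosen ordinary point $A_0/\bar{\F}_p$, the restriction of the cover $\A_{g,pN} \to \A_{g,G}$ with an explicit Kummer cover, and then to apply an incompressibility bound for elementary abelian $p$-covers. Write $H := G \cap U(\F_p)$ and $r := \dim_{\F_p} H$; the subgroup $H$ is elementary abelian of rank $r$.

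First, choose an ordinary principally polarized abelian variety $A_0/\bar{\F}_p$ whose Lagrangian decomposition $A_0[p] = A_0[p]^{\et} \oplus \mu_p^g$ matches the Siegel parabolic $P = U \rtimes L$ under $G \to \Sp_{2g}(\F_p)$; then $U(\F_p)$ is exactly the unipotent subgroup acting trivially on both $A_0[p]^{\et}$ and $A_0[p]/A_0[p]^{\et}$. Serre--Tate theory provides a canonical identification of the formal completion of $\A_g$ at $A_0$ (over $W(\bar{\F}_p)$) with a formal torus $\widehat{T}$ of dimension $d = g(g+1)/2 = \dim U$, together with
\[
\widehat{T}[p] \;\simeq\; \Sym^2(A_0[p]^{\et}) \otimes \mu_p \;\simeq\; U(\F_p) \otimes \mu_p.
\]
Under this identification, the pullback of $\A_{g,pN}$ to the rigid generic fiber of $\widehat{T}$ is (up to an \'etale prime-to-$p$ factor from the level-$N$ part) the $\widehat{T}[p]$-torsor arising from the $p$-power isogeny $[p]\colon \widehat{T}\to\widehat{T}$, and $H$ acts on this cover by translation via $H \to U(\F_p) \hookrightarrow \widehat{T}[p]$.

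The conclusion then follows from two standard facts: (i) because $\A_{g,pN} \to \A_{g,pN}/H \to \A_{g,G}$ is a sub-factorization of the original $G$-cover, any compression (after prime-to-$p$ base change) of the $G$-cover induces a compression of the $H$-subcover, so $\ed(\A_{g,pN}/\A_{g,G};p) \geq \ed(\A_{g,pN}/(\A_{g,pN}/H);p)$; and (ii) by the Kummer description above, on the rigid generic fiber the $H$-subcover is classified by $r$ $\F_p$-linearly independent elements of $H^1(K,\mu_p)$, where $K = \text{Frac}(\widehat{T})$, namely the images under the Kummer sequence of $r$ Serre--Tate coordinates dual to a basis of $H \subset \widehat{T}[p]$. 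By Karpenko--Merkurjev such an $H$-torsor has essential dimension at $p$ equal to $r$, giving the desired bound. The main technical obstacle is the compatibility in the middle paragraph between the $G$-action on the moduli cover and the Serre--Tate translation structure on $\widehat{T}$; this should follow from the canonicity of Serre--Tate for deformations with extra automorphisms, but requires careful tracking of the mixed-characteristic setup and of the interaction with the level-$N$ structure, and it may be convenient to arrange that $A_0$ itself carry a genuine $G$-action (for instance by taking $A_0$ to be a suitable power of an ordinary elliptic curve).
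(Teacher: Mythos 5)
Your overall strategy is the right one, and it is closely aligned with the paper's: work near an ordinary point, use the Serre--Tate formal torus $\widehat T$ with $\widehat T[p]\simeq U(\F_p)\otimes\mu_p$, identify the cover on the rigid generic fiber with a Kummer cover given by Serre--Tate coordinates, and reduce to the subgroup $H = G\cap U(\F_p)$. The reduction step (your item (i)) is also exactly what the paper does. But two steps are glossed over, and they are where the actual mathematical content sits.

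First, your item (ii) invokes Karpenko--Merkurjev to conclude that an $H$-torsor over $K=\operatorname{Frac}(\widehat T)$ classified by $r$ $\F_p$-linearly independent elements of $H^1(K,\mu_p)$ has essential dimension at $p$ equal to $r$. This is not what Karpenko--Merkurjev gives: their theorem computes $\ed(H;p)$, i.e.\ the \emph{maximum} over all $H$-torsors; it says nothing about a specific torsor. And the naive statement is false. If $K = \CC(t_1,t_2)$ and $a_1=t_1$, $a_2=t_1+t_2$, $a_3=t_1+2t_2$, these three classes are $\F_p$-linearly independent in $K^*/(K^*)^p$, yet the corresponding $(\Z/p)^3$-torsor has essential dimension at most $2$. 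Linear independence in $K^*/(K^*)^p$ is far weaker than the ``nondegeneracy'' condition one actually needs. The paper's Lemma~\ref{lem:nondeg} gives the correct criterion: the subspace $\UU\subset\Ext^1_{\tilde X}(\Z/p\Z,\mu_p)$ of syntomic extension classes must survive every restriction to $k[\epsilon]$, which is what forces the image in $\bar\gm_A/\bar\gm_A^2$ to be full. This nondegeneracy, verified via \cite[3.2.2]{FKW}, is precisely the incompressibility input you are missing.

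Second, and more fundamental, you need a bridge between the essential dimension of the global cover over $K$ (or its algebraic closure) and the local structure at the ordinary point. Passing to the formal or rigid-analytic neighborhood of an ordinary point is not a prime-to-$p$ base change, and a compression of the global cover does not automatically yield a compression of the local cover. This is the content of the paper's Lemma~\ref{lem:basicedestimate} and Proposition~\ref{prop:essdimAg}: one shows that any compression of the syntomic group scheme $\A[p]$ over a power series ring over $\O_K$ is constrained at the level of cotangent spaces, using the classification of extensions over $k[\epsilon]$. You acknowledge the ``mixed-characteristic setup'' as a potential obstacle, but this is not a cleanup detail — it is the engine of the lower bound. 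Finally, a smaller point: Proposition~\ref{prop:essdimAg} in the paper is set up so that $\ZZ\to\A_{g,N}$ need not be an immersion, which is essential when $\ZZ=\A_{g,G}$, and this requires $\A_{g,N,U\cap G}$ to be identified with a power series ring $A(\UU'_G)$ via Lemma~\ref{lem:span}; your proposal does not address why this local ring is smooth. So the plan is sound, but the two incompressibility lemmas that it implicitly assumes are the theorem.
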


Theorem~\ref{ithm:mainthm} is of most interest for those $G$ which admit a symplectic representation with $\dim_{\F_p} G \cap U(\F_p) = \ed(G;p),$ 
where $\ed(G;p)$ is the essential dimension at $p$ of a {\em versal} branched cover with group $G$ (see Definition~\ref{d:evers} below). 
For $G = S_n,$ a result of Harman (Theorems \ref{charnot2} and \ref{char2}) 
says that this is possible only for $p=2,$ and only using the particular mod $2$ symplectic representation of Jordan/Dickson!   We also show that for $G$ the $\F_q$-points of a split semisimple group of classical type, there is a symplectic representation of $G$ for which the lower bound in Theorem \ref{ithm:mainthm} is either equal or nearly equal to the maximal rank of an elementary abelian $p$-group in $G.$  The only near-misses occur for odd orthogonal groups.  Note however, that this rank is in general less than $\ed(G;p).$ 

\medskip
\noindent
{\bf \boldmath$\E$-versal modular functions.} Kummer theory gives that for each $d\geq 2$ the cover $\Pb^1\to \Pb^1/(\Z/d\Z)$ has the following universal property: any $\Z/d\Z$ cover $\tilde{X}\to X$ is pulled back from it.  It follows that  $\ed(\tilde{X}/X;p)=1$ for any such $\tilde{X}\to X$.  
Klein's {\it Normalformsatz} states that, while the icosahedral cover $\Pb^1\to \Pb^1/A_5$ is not universal in the above sense (indeed $\ed(\M_{0,5}\to \M_{0,5}/A_5)=2$), there exists a $\Z/2\Z$  accessory irrationality 
\[
\begin{array}{ccc}
\tilde{Y}&\to&\tilde{X}\\
\downarrow&&\downarrow\\
Y&\to&X
\end{array}
\]
such that $\tilde{Y}\to Y$ is a pullback of $\Pb^1\to \Pb^1/A_5$. This nonabelian version of Kummer's theorem is a kind of classification of actions of $A_5$ on all varieties.  We say in this case that $\Pb^1\to \Pb^1/A_5$ is {\em $\E$-versal} with respect to any collection $\E$ of covers containing $\Z/2\Z$ covers. Note that this cover is modular; indeed it is equivariantly birational to the cover $\hyp^2/\Gamma_2(5)\to\hyp^2/\SL_2(\Z)$, where $\hyp^2$ is the hyperbolic plane and $\Gamma_2(5)$ is the level $5$ congruence subgroup of $\SL_2(\Z)$; here we are using the natural isomorphism $\PSL_2(\F_5)\cong A_5$.    

In \S\ref{s:equiv} we axiomatize the idea of $\E$-versality and we give a number of examples (most classically known) of congruence covers that are $\E$-versal for various groups $G$. One sample result on $\E$-versality is the following (see \S\ref{subsection:versal} for terminology).   For $\Gamma<\SL_2(\Rb)\times\SL_2(\Rb)$ a lattice, let  $M_\Gamma:(\hyp^2\times\hyp^2)/\Gamma$; these are complex-algebraic varieties called {\em Hilbert modular surfaces}. 

\begin{iprop}
 For $\E$ any class of accessory irrationalities containing all quadratic and cubic covers and composites thereof, the Hilbert modular surface
			\[
				M_{\widetilde{\SL_2(\Z[\frac{1+\sqrt{5}}{2}],3)}}\to M_{\SL_2(\Z[\frac{1+\sqrt{5}}{2}])}
			\]
			is $\E$-versal for $A_6$, where $\widetilde{\SL_2(\Z[\frac{1+\sqrt{5}}{2}],3)}$ denotes the kernel of the map \[\SL_2(\Z[\frac{1+\sqrt{5}}{2}])\to\PGL_2(\F_9)\cong A_6.\]  In particular, Hilbert's Sextic Conjecture is equivalent to the statement that the resolvent degree of this cover equals $2$.
	\end{iprop}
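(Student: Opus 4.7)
The plan is to establish $\E$-versality by invoking the general $\E$-versality result for congruence covers from Section~\ref{s:equiv}, and then to translate the resulting identity of resolvent degrees into Hilbert's Sextic Conjecture via the standard quadratic relationship between $A_6$ and $S_6$.

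First, I would verify the Galois-group identification. The prime $3$ is inert in the ring of integers $\O_K := \Z[\frac{1+\sqrt{5}}{2}]$ of $\Q(\sqrt{5})$, since $x^2 - x - 1$ is irreducible modulo $3$, so $\O_K/3\O_K \cong \F_9$. Reducing mod $3$ and then passing to the adjoint quotient yields a surjection $\SL_2(\O_K) \twoheadrightarrow \PSL_2(\F_9) \cong A_6$, whose kernel is $\widetilde{\SL_2(\O_K,3)}$ by definition. Thus $M_{\widetilde{\SL_2(\O_K,3)}} \to M_{\SL_2(\O_K)}$ is an $A_6$-Galois cover of the $2$-dimensional Hilbert modular surface $M_{\SL_2(\O_K)}$.

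Next, for $\E$-versality I would use the modular interpretation: $M_{\SL_2(\O_K)}$ is the coarse moduli of principally polarized abelian surfaces with real multiplication by $\O_K$, and the congruence cover parametrizes such surfaces with a compatible level-$3$ structure. Given an arbitrary $A_6$-torsor $\tilde X \to X$, the goal is to construct, after pullback by a composite of quadratic and cubic accessory irrationalities in $\E$, a rational map $X \dashrightarrow M_{\SL_2(\O_K)}$ whose pullback of the congruence cover recovers $\tilde X \to X$. A quadratic accessory irrationality is used to lift the $\PSL_2(\F_9)$-torsor through the central $\Z/2$-extension by $\SL_2(\F_9)$, and cubic accessory irrationalities enter when rigidifying the $3$-torsion data on the family of RM abelian surfaces one must construct over a suitable cover of $X$. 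I expect the main obstacle to be precisely this construction of a family of RM abelian surfaces realizing the prescribed torsor on $3$-torsion; the general $\E$-versality theorem of Section~\ref{s:equiv} is what provides it, via a Serre--Tate style deformation argument combined with the fineness of the moduli problem after imposing sufficient level structure.

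Once $\E$-versality is in hand, the resolvent degree of the cover equals $\RD(A_6)$. Since $A_6$ has index $2$ in $S_6$, passage between $A_6$- and $S_6$-covers is effected by the quadratic accessory irrationality of adjoining $\sqrt{\mathrm{disc}}$, giving $\RD(A_6) = \RD(S_6)$. By universality of $\M_{0,6} \to \M_{0,6}/S_6$ this is the resolvent degree of the general sextic, so Hilbert's Sextic Conjecture asserts this common value is at most $2$. The upper bound $\RD \leq 2$ for the Hilbert modular cover comes from its base being $2$-dimensional, while the lower bound $\RD(A_6) \geq 2$ follows from $A_5 \subset A_6$ together with the classical fact $\RD(A_5) = 2$ (Klein's icosahedral solution, combined with the nonsolvability of $A_5$). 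Therefore Hilbert's Sextic Conjecture is equivalent to the assertion that the resolvent degree of $M_{\widetilde{\SL_2(\O_K,3)}} \to M_{\SL_2(\O_K)}$ equals $2$.
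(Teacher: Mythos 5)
Your proposal for the $\E$-versality step is not a proof: there is no ``general $\E$-versality theorem for congruence covers'' in Section~\ref{s:equiv} to invoke, and no Serre--Tate deformation argument appears there (Serre--Tate theory is used in Section~\ref{sec:moduli} for lower bounds on essential dimension at $p$, a different question entirely). The actual proof of part (3) of Proposition~\ref{p:sext} goes via two concrete steps: first, by van der Geer \cite[Chapter VIII, Theorem 2.6]{vdG} there is an $A_6$-equivariant birational isomorphism of $M_{\widetilde{\SL_2(\Z[\frac{1+\sqrt{5}}{2}],3)}}$ with the surface $V_{1,2,4}\subset\Pb^5$ cut out by the $1$st, $2$nd and $4$th elementary symmetric polynomials (with $A_6$ acting by permuting coordinates); second, one builds an equivariant dominant map $\AA^6|_E\to V_{1,2,4}$ after a suitable accessory irrationality $E\to\AA^6/A_6$ by the classical theory of Tschirnhaus transformations (solve the linear condition, pass to a $2$-group cover to put a ruling on the Tschirnhaus quadric, then take an $S_4$-cover to pick a root of the residual quartic). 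Nothing in your sketch — lifting through $\SL_2(\F_9)\to\PSL_2(\F_9)$, constructing families of RM abelian surfaces with level-$3$ structure — is carried out or is obviously carryable out; you have identified an obstacle and then asserted that an unnamed theorem overcomes it.

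Your final paragraph also contains a substantive error. You claim $\RD(A_5)=2$ and use $A_5\subset A_6$ to conclude $\RD(A_6)\geq 2$. But Klein's \emph{Normalformsatz} (stated in Section~\ref{s:equiv}) gives $\RD(\tilde X/X)=1$ for every $A_5$-cover, so $\RD(A_5)=1$; you appear to be conflating $\RD(A_5)$ with $\ed(A_5)=2$. More importantly, \emph{no} lower bound $\RD(A_6)\geq 2$ is known — this is precisely the content of Hilbert's Sextic Conjecture. The correct logic is the one the paper gives: $\E$-versality plus Lemma~\ref{l:versmax} shows the resolvent degree of the congruence cover equals $\RD(6)$ (using that $\M_{0,6}\to\M_{0,6}/A_6$ is also $\E$-versal, cf.\ Remark~\ref{remark:M0n:versal}), and the Sextic Conjecture \emph{is} the assertion that this common value is $2$; no unconditional lower bound is claimed.
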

	
The connection between these $\E$-versality results with the first part of this paper is that $\E$-versal $G$-covers always maximize $\ed(\tilde{X}/X;\E)$ over all $G$-covers $\tilde{X}\to X$.   In \S\ref{s:equiv} we apply such $\E$-versality results to exhibit further the close relationship between modular functions and roots of polynomials. Specifically, Hilbert's 13th Problem, and his Sextic and Octic Conjectures (see \S\ref{s:equiv} for their exact statements) are phrased in terms of the resolvent degree of the degree $6,7$ and $8$ polynomials.  The {\em resolvent degree} $\RD(\tilde{X}/X)$ is the smallest $d$ such that $\tilde{X}\to X$ is covered by a composite of covers, each of essential dimension $\leq d$ (see e.g. \cite{AS,Bra,FW}). 
 Applying various $\E$-versality results, we deduce in \S\ref{s:equiv}  the equivalence of each of Hilbert's conjectures with a conjecture about the resolvent degree of a specific modular cover. Similarly, we show that such a modular reformulation is possible not only for general polynomials of low degree, but also for each of the algebraic functions considered by Klein and his school \cites{KleinFirst,KleinLetter,KleinHist,Fricke}. 

\medskip
\noindent
{\bf Methods. }The proof of Theorem \ref{ithm:mainthm} uses a refinement of the results of \cite{FKW}, which is explained in \S 1. 
In {\em loc.~cit}, we used Serre-Tate theory to give lower bounds on the essential at $p$ for the coverings $\A_{g,pN} \rightarrow \A_{g,N},$ 
when restricted to (some) subvarieties $\ZZ \subset \A_{g,N}.$ Here we drop the assumption that $\ZZ$ is a subvariety and allow certain maps 
$\ZZ \rightarrow \A_{g,N}$ (cf.\ Proposition~\ref{prop:essdimAg}).   In particular, we can apply the resulting estimate to $\ZZ = \A_{g,G}$ for 
$G$ a subgroup of $\Sp_{2g}(\F_p),$ which yields the lower bound for $\ed(\A_{g,pN}/ \A_{g,G};p)$ in Theorem \ref{ithm:mainthm}.

One may compare the bounds given by Theorem \ref{ithm:mainthm} to those obtained in \cite[\S 4]{FKW} for 
certain finite simple groups of Lie type. 
The bound in the case of odd orthogonal groups in {\em loc.~cit} is weaker than the one given here because of the restriction on 
the signature of Hermitian symmetric domains associated to odd orthogonal groups. On the other hand the coverings we consider here correspond to rather more exotic congruence subgroups than those of {\em loc.~cit}.

\bigskip
\noindent
{\bf Acknowledgements.} {The authors would like to thank Robert Guralnick for pointing out a mistake in an earlier version of this paper. 
We also thank Igor Dolgachev, Bert van Geemen, Bruce Hunt,  Aaron Landesman, Zinovy Reichstein and Ron Solomon for helpful correspondence. }

\section{Moduli of Abelian varieties}\label{sec:moduli}
\numberwithin{equation}{thm}
\subsection{Extension classes}\label{subsec:extns}
\begin{para}
Fix a prime $p,$ and let $V$ be a complete discrete valuation ring of characteristic $0,$
with perfect residue field $k$ of characteristic $p,$ and a uniformizer $\pi \in V.$
Let $A = V\lps x_1, \dots, x_n \rps$ be a power series ring over $V.$
We denote by $\gm_A \subset A$ the maximal ideal, and $\bar \gm_A = \gm_A/\pi A.$
and set $\tilde X = \Spec A,$ and $X = \Spec A[1/p].$ We will denote by $k[\epsilon] = k[X]/X^2$ the dual numbers over $k.$

Recall \cite[3.1.2]{FKW} that there is a commutative diagram
$$\xymatrix{
A^\times/(A^\times)^p \ar[r]^\sim\ar[d] & \Ext^1_{\tilde X}(\Z/p\Z, \mu_p) \ar[d] \\
A[1/p]^\times/(A[1/p]^\times)^p \ar[r]^{\phantom{ttt}\sim} & \Ext^1_X(\Z/p\Z, \mu_p)
 }$$
where the terms on the right are extensions as $\Z/p\Z$-sheaves.
The vertical maps are injective, and the extensions in the image of the map on the right are 
called {\it syntomic}.  There is also a map \cite[3.1.5]{FKW}
$$ \theta_A: \Ext^1_{\tilde X}(\Z/p\Z, \mu_p) \iso A^\times/(A^\times)^p \rightarrow \bar\gm_A/\bar\gm_A^2.$$
which sends a class represented by a function $f \in 1 + \gm_A$ to $f-1.$

\end{para}

\begin{lemma}\label{lem:nondeg} Let $\UU \subset  \Ext^1_{\tilde X}(\Z/p\Z, \mu_p)$ be an $\F_p$-subspace of dimension $\leq n.$
Suppose that for every map $h:A \rightarrow k[\epsilon]$ the image of $\UU$ under the induced map
\begin{equation}\label{eqn:nondeg}
\Ext^1_{\tilde X}(\Z/p\Z, \mu_p) \rightarrow \Ext^1_{\Spec k[\epsilon]}(\Z/p\Z, \mu_p)
\end{equation}
is nontrivial. Then the map
\begin{equation}\label{eqn:nondegII}
\theta_A: \UU\otimes_{\F_p} k \rightarrow \bar \gm_A/\bar \gm_A^2
\end{equation}
is an isomorphism; in particular $\dim_{\F_p}\UU = n.$
\end{lemma}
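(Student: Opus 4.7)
The plan is to exploit the fact that $\bar\gm_A/\bar\gm_A^2$ is an $n$-dimensional $k$-vector space with basis the images of $x_1, \dots, x_n,$ and to use the hypothesis through a duality argument. Since $\dim_k(\UU \otimes_{\F_p} k) = \dim_{\F_p} \UU \leq n = \dim_k(\bar\gm_A/\bar\gm_A^2),$ it suffices to prove that (\ref{eqn:nondegII}) is \emph{surjective}: surjectivity combined with the dimension bound automatically forces injectivity and $\dim_{\F_p} \UU = n.$ Surjectivity is equivalent to the statement that $\theta_A(\UU)$ is contained in no proper $k$-subspace of $\bar\gm_A/\bar\gm_A^2,$ i.e.~that for every nonzero $k$-linear functional $\phi : \bar\gm_A/\bar\gm_A^2 \to k,$ the composition $\phi \circ \theta_A : \UU \to k$ is nonzero.

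Next, I would associate to each such $\phi$ a specific ring map $h_\phi : A \to k[\epsilon]$ defined by $h_\phi(v) = \bar v$ for $v \in V$ (in particular $h_\phi(\pi) = 0$) and $h_\phi(x_i) = \phi(\bar x_i) \cdot \epsilon.$ Continuity for the $(\pi, x)$-adic topology is automatic since $\epsilon^2 = 0.$ Using the identification $\Ext^1_{\Spec k[\epsilon]}(\Z/p\Z, \mu_p) \cong k[\epsilon]^\times/(k[\epsilon]^\times)^p \cong k$ via $1 + b\epsilon \mapsto b$ (which relies on $(a + b\epsilon)^p = a^p$ in characteristic $p$ together with $k$ being perfect, so $k^\times = (k^\times)^p$), the map (\ref{eqn:nondeg}) for $h = h_\phi$ sends a class represented by $f = 1 + g \in 1 + \gm_A$ to the $\epsilon$-coefficient of $h_\phi(g).$ Writing $g = \pi a + \sum_i x_i b_i$ with $a, b_i \in A,$ the condition $h_\phi(\pi) = 0$ kills the first summand, and $\epsilon^2 = 0$ kills every higher-order contribution, yielding $h_\phi(g) \equiv \bigl(\sum_i \phi(\bar x_i)\, \bar b_i(0)\bigr) \epsilon \pmod{\epsilon^2},$ where $\bar b_i(0) \in k$ denotes the constant term of $b_i$ modulo $\pi.$ Since $\theta_A(f) = \sum_i \bar b_i(0)\, \bar x_i$ in $\bar\gm_A/\bar\gm_A^2,$ the induced map on Ext-groups is precisely $\phi \circ \theta_A.$

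The hypothesis applied to each $h_\phi$ then says exactly that $\phi \circ \theta_A$ is nonzero on $\UU,$ for every nonzero $k$-linear functional $\phi.$ Thus $\theta_A(\UU)$ lies in no hyperplane of $\bar\gm_A/\bar\gm_A^2,$ giving surjectivity of the $k$-linear extension $\theta_A : \UU \otimes_{\F_p} k \to \bar\gm_A/\bar\gm_A^2,$ and the isomorphism and dimension conclusions follow as above.

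The main point to verify carefully is the compatibility between the map on Ext-groups induced by $h_\phi$ and $\phi \circ \theta_A.$ This is essentially Kummer theory, but one must keep track of why the $\pi A$-contribution to $g$ vanishes, which is precisely why we restrict to $h$'s with $h(\pi) = 0$ (choices $\pi \mapsto c\epsilon$ with $c \ne 0$ would introduce a term depending on $\bar a(0),$ which is not visible in $\bar\gm_A/\bar\gm_A^2$). The hypothesis being quantified over \emph{all} maps $h$ makes this restriction harmless, so the argument goes through.
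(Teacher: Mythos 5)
Your proof is correct and follows essentially the same route as the paper's: the paper also reduces to surjectivity of $\theta_A$ on $\UU\otimes_{\F_p}k$ by noting that every nonzero $k$-linear functional on $\bar\gm_A/\bar\gm_A^2$ arises from a map $h:A\to k[\epsilon]$ (with $h(\pi)=0$), so that nontriviality of the image of $\UU$ under each induced Ext-map forces $\theta_A(\UU)$ to miss every hyperplane, hence span. You have simply made explicit the Kummer-theoretic identification of the Ext-map with $\phi\circ\theta_A$ and the harmlessness of only invoking $h$'s killing $\pi$, both of which the paper leaves implicit.
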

\begin{proof}
Since the image of $\UU$ under \ref{eqn:nondeg} is nontrivial, the composite
$$ \theta_A: \UU\otimes_{\F_p} k \rightarrow \bar \gm_A/\bar \gm_A^2 \rightarrow \epsilon\cdot k$$
is nontrivial for every $h.$ This implies that \ref{eqn:nondegII} is surjective, and since $\dim_{\F_p}\UU \leq n$
it is injective, and $\dim_{\F_p}\UU = n.$
\end{proof}

\begin{para}\label{para:extns} We call a subspace $\UU \subset  \Ext^1_{\tilde X}(\Z/p\Z, \mu_p)$ satisfying the conditions of Lemma~\ref{lem:nondeg} {\em nondegenerate},
and we fix such a subspace. Now assume that $V$ contains a primitive $p\th$ root of unity, and fix a geometric point $\bar x$ of $X.$
Then 
$$ \Ext^1_X(\Z/p\Z, \mu_p) \iso H^1(X, \mu_p) = H^1_{\et}(X, \mu_p) = \Hom(\pi_1(X,\bar x), \mu_p).$$
If $\UU' \subset \UU$ is a subspace,  denote by $X(\UU') \rightarrow X$ the finite \'etale cover corresponding to $\UU'.$
That is, $X(\UU')$ is the cover corresponding to the intersection of all the elements of $\Hom(\pi_1(X,\bar x), \mu_p)$ that are images of elements of $\UU'.$ We let $\tilde X' = \Spec A(\UU')$ denote the normalization of $\tilde X$ in $X(\UU').$
\end{para}

\begin{lemma}\label{lem:span} For any $\UU' \subset \UU$ the ring $A(\UU')$ is a power series ring over $V.$  Further, 

\begin{equation}\label{eqn:span}
\dim_k \, \Im\!\!(\bar\gm_A/\bar\gm_A^2 \rightarrow \bar \gm_{A(\UU')}/ \gm^2_{A(\UU')}) = \dim_{\F_p}(\UU/\UU').
\end{equation}
\end{lemma}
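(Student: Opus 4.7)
The plan is to adapt the coordinates on $A$ to the Kummer generators determined by $\UU'$ and then read off both claims by inspection. First, pick a basis $u_1, \dots, u_n$ of $\UU$ whose first $m$ elements form a basis of $\UU'$, and choose representatives $f_i \in 1 + \gm_A$. Lemma~\ref{lem:nondeg} says the classes $f_i - 1 \in \bar\gm_A/\bar\gm_A^2$ form a $k$-basis, so together with $\pi$ they span $\gm_A/\gm_A^2$. By Nakayama, the $V$-algebra map $V\lps t_1, \dots, t_n \rps \to A$ sending $t_i \mapsto f_i - 1$ is surjective; since source and target are regular local rings of dimension $n+1$, it is an isomorphism. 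Relabelling, I may assume $A = V\lps x_1, \dots, x_n \rps$ with $f_i = 1 + x_i$.

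Next, define $B := A[y_1, \dots, y_m]/I$, where $I$ is generated by the elements $(1+y_i)^p - (1+x_i)$ for $i = 1, \dots, m$. Then $B$ is a free $A$-module of rank $p^m$, and $B[1/\pi]$ is the Kummer cover $X(\UU')$ of~\ref{para:extns}. The core step, which I expect to be the main obstacle, is to show that $B$ is a regular local ring; once this is done, $B$ is normal, so it must equal the normalization $A(\UU')$ of $A$ in $X(\UU')$, establishing the first assertion. To prove regularity I reduce modulo $\pi$: since $A/\pi$ has characteristic $p$ and $\pi \mid p$, the defining relations collapse to $y_i^p = x_i$, and using these to eliminate $x_1, \dots, x_m$ identifies $B/\pi$ with $k\lps y_1, \dots, y_m, x_{m+1}, \dots, x_n \rps$, a regular local ring of dimension $n$. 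Since $\pi$ is a non-zero-divisor in $B$, this forces $B$ to be regular of dimension $n+1$, with $y_1, \dots, y_m, x_{m+1}, \dots, x_n$ as a regular system of parameters.

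Finally, the image of $\bar\gm_A/\bar\gm_A^2 \to \bar\gm_{A(\UU')}/\bar\gm_{A(\UU')}^2$ is the $k$-span of the images of $x_1, \dots, x_n$. For $i > m$, $x_i$ belongs to the regular system of parameters found above, so its image is a basis vector. For $i \le m$, expanding gives $x_i = p y_i + \binom{p}{2} y_i^2 + \dots + y_i^p$; each $\binom{p}{k}$ with $1 \le k \le p-1$ is divisible by $p$, which lies in $\pi V$ because $V$ contains $\zeta_p$ (so $v_V(p) \ge p - 1$), while $y_i^p \in \gm_B^p \subseteq \gm_B^2$. Hence $x_i \in \pi B + \gm_B^2$ and its image vanishes, so the image is the $(n-m)$-dimensional subspace spanned by the classes of $x_{m+1}, \dots, x_n$, yielding $n - m = \dim_{\F_p}(\UU/\UU')$ as required.
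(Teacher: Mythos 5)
Your argument is correct and follows essentially the same route as the paper: choose coordinates so that $f_i = 1 + x_i$ (using nondegeneracy of $\UU$), identify $A(\UU')$ with the Kummer quotient $A[y_1,\dots,y_m]/\bigl((1+y_i)^p - f_i\bigr)$, verify this is a power series ring, and read off the image of the cotangent map. Two small slips that do not affect the argument: a regular system of parameters for the $(n+1)$-dimensional ring $B$ must include $\pi$ alongside $y_1,\dots,y_m,x_{m+1},\dots,x_n$; and for the cotangent computation you only need $p \in \pi V$, which is automatic since the residue field of $V$ has characteristic $p$, not the sharper $v_V(p)\ge p-1$ coming from $\zeta_p\in V$.
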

\begin{proof}
 Let $f_1,\dots, f_r \in A^\times$ be elements with $1-f_i \in \gm_A,$ and such that the images of $f_1,\dots, f_r$
in $\Ext^1_{\tilde X}(\Z/p\Z, \mu_p)$ form an $\F_p$-basis for $\UU'.$
By definition,  $X(\UU') = \Spec A[1/p](\sqrt[p]{f_1}, \dots, \sqrt[p]{f_r}).$ To prove the first claim,
it suffices to show that
$$ A(\sqrt[p]{f_1}, \dots, \sqrt[p]{f_r}) = A[z_1,\dots, z_r]/(z_i^p-f_i)$$
is a power series ring over $V.$  
Since $\UU$ is nondegenerate, the images of $f_1,\dots, f_r$ are $k$-linearly independent in $\gm_A/\gm_A^2.$
Hence, after a change of coordinates, we can assume that $A \iso V\lps x_1,\dots, x_n\rps $ with
$x_i = f_i-1$ for $i=1,\dots r.$ Then we have
$$ A[z_1,\dots, z_r]/(z_i^p-f_i) \iso V\lps z_1-1, \dots, z_r-1, x_{r+1}, \dots, x_n \rps.$$

This also shows \ref{eqn:span}, as both sides are equal to $n-r.$
\end{proof}

\subsection{Monodromy on the ordinary locus}
\begin{para}
Fix an integer $g \geq 1$, a prime $p\geq 2$, and a positive integer $N \geq 2$ coprime to $p.$
Consider the ring $\Z[\zeta_N][1/N],$ where $\zeta_N$ is a primitive $N^{\text{\rm th}}$ root of $1.$
Denote by $\A_{g,N}$ the $\Z[\zeta_N][1/N]$-scheme which is the coarse moduli space of principally polarized abelian schemes
 $A$ of dimension $g$ equipped with a basis of $A[N]$ that is symplectic with respect to the Weil pairing defined by $\zeta_N.$
When $N \geq 3,$ this is a fine moduli space which is smooth over $\Z[\zeta_N][1/N].$ For a $\Z[\zeta_N][1/N]$-algebra $B,$ denote by
$\A_{g,N/B}$ the base change of $\A_{g,N}$ to $B.$ If no confusion is likely to result, we sometimes denote this base change simply by $\A_{g,N}.$

From now on, unless stated otherwise, we assume that $N \geq 3$ and we let $\A \rightarrow \A_{g,N}$ be the universal abelian scheme. The $p$-torsion subgroup $\A[p] \subset \A$ is a finite flat group scheme over $\A_{g,N}$ which is \'etale over $\Z[\zeta_N][1/Np].$ Let $x \in \A_{g,N}$ be a point with residue field $\kappa(x)$ of characteristic $p,$ and $\A_x$ the corresponding abelian variety over $\kappa(x).$

The set of points $x$ such that $\A_x$ is ordinary  is an open subscheme $\A_{g,N}^\ord \subset \A_{g,N}\otimes \F_p$.
We denote by $\widehat \A_{g,N}^\ord$ the formal completion of $\A_{g,N}$ along
$\A_{g,N}^\ord.$ We denote by $\A_{g,N}^{\ord,\an}$ the ``generic fibre'' of $\widehat \A_{g,N}^\ord$ as a $p$-adic analytic space.\footnote{The reader may think of any version of the theory of $p$-adic analytic spaces they prefer (Tate, Raynaud, Berkovich, or H\"uber's adic spaces), as this will have no bearing on our arguments.}

Denote by $k$ an algebraically closed perfect field of characteristic $p,$ and let $K/W[1/p]$ be a finite extension with ring of integers $\O_K$ and uniformizer $\pi.$  Assume that $K$ is equipped with a choice of primitive $N^{\text{\rm th}}$ root of $1,$ $\zeta_N \in K,$
so that we may consider all the objects introduced above over $\O_K.$  Let $\bar K/K$ be an algebraic closure.  

\end{para}
\begin{prop}\label{prop:monodromy} Fix a geometric point $x \in \widehat \A_{g,N}^{\ord,\an}(\bar K)$ and denote by $\bar x \in \A_{g,N}^{\ord}$ its reduction.The covering $\A_{g,pN} \rightarrow  \A_{g,N}$ corresponds to a surjective representation
\begin{equation}\label{eqn:monodromy}
\pi_1(\A_{g,N}, x) \rightarrow \Sp_{2g}(\F_p).
\end{equation}
\begin{enumerate}
\item There exists a Siegel parabolic $P \subset  \Sp_{2g}/_{\F_p}$ with unipotent radical $U,$ such that
\ref{eqn:monodromy} induces a surjective representation
\begin{equation}\label{eqn:monodromyII}
\pi_1(\widehat \A_{g,N}^{\ord,\an},x) \rightarrow P(\F_p).
\end{equation}
\item Let $A = \widehat \O_{\A_{g,N},\bar x}$ be the completion of the local ring at $\bar x.$
Then (\ref{eqn:monodromy}) induces a surjective representation
\begin{equation}\label{eqn:monodromyIII}
\pi_1(\Spec A[1/p], x) \rightarrow U(\F_p).
\end{equation}
\end{enumerate}
\end{prop}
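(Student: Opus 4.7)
My plan is to handle the preliminary surjectivity of \eqref{eqn:monodromy} by citing the classical fact about moduli monodromy, to prove (1) using the canonical connected-étale filtration of $\A[p]$ over the ordinary locus, and to prove (2) using Serre-Tate theory at the ordinary point $\bar x$. The lower-half unipotent monodromy coming out of (2) will then feed back into (1) to give surjectivity onto the full parabolic.

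For the preliminary surjectivity $\pi_1(\A_{g,N}, x) \to \Sp_{2g}(\F_p)$, I would invoke the classical irreducibility of $\A_{g,pN} \to \A_{g,N}$ (e.g., by Deligne-Mumford or by descent from the complex-analytic picture, where the monodromy of the universal family is a full congruence subgroup of $\Sp_{2g}(\Z)$). For part (1), I would observe that on $\widehat\A_{g,N}^{\ord,\an}$ the universal $p$-torsion sits in its canonical connected-étale sequence
$$0 \to \A[p]^0 \to \A[p] \to \A[p]^{\et} \to 0,$$
with $\A[p]^0$ of multiplicative type (locally $\mu_p^g$) and $\A[p]^{\et}$ étale (locally $(\Z/p\Z)^g$), and with $\A[p]^0$ Lagrangian for the Weil pairing (isotropic of maximal rank, Cartier dual to $\A[p]^{\et}$). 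The monodromy must preserve this Lagrangian sub-local system, so \eqref{eqn:monodromyII} factors through the stabilizer of a Lagrangian, which is precisely a Siegel parabolic $P \subset \Sp_{2g}$. Surjectivity onto the Levi quotient $\GL_g(\F_p)$ is the classical monodromy of the étale part over the ordinary locus; combined with the surjectivity onto $U(\F_p)$ proved in Step~2 below, this gives surjectivity onto $P(\F_p)$.

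For part (2), I would apply Serre-Tate theory at the ordinary point $\bar x$: the completed local ring $A = \widehat\O_{\A_{g,N}, \bar x}$ is canonically endowed with Serre-Tate coordinates $q_{ij} = q_{ji}$ ($1 \leq i,j \leq g$), making $\Spf A$ into a formal torus of dimension $g(g+1)/2$. Over $\Spf A$, both $\A[p]^0$ and $\A[p]^{\et}$ are constant, and the extension class of $\A[p]$ in $\Ext^1_A(\A[p]^{\et}, \A[p]^0)$ is prescribed by the symmetric matrix $(q_{ij})$ through the Kummer identification of \S\ref{subsec:extns}. Hence on $\Spec A[1/p]$, the cover $\A_{g,pN}$ is the Kummer cover obtained by adjoining all $\sqrt[p]{q_{ij}}$, and its Galois group is the full symmetric Hom space $\Hom^{\text{sym}}(\A[p]^{\et}, \A[p]^0) \cong U(\F_p)$; surjectivity of \eqref{eqn:monodromyIII} follows.

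The main obstacle I anticipate is the careful matching between the Serre-Tate coordinates $q_{ij}$ and the extension classes in $\Ext^1(\Z/p\Z,\mu_p)$ of \S\ref{subsec:extns}, together with the verification that principal polarization forces symmetry and produces exactly the $g(g+1)/2$-dimensional $U(\F_p)$ (not a proper subquotient). Once this identification is in hand, it also positions the Serre-Tate parameters to serve as the nondegenerate subspace $\UU$ in Lemma~\ref{lem:nondeg} for subsequent use.
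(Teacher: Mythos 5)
Your proposal is correct and follows essentially the same route as the paper: Weil pairing and the connected--\'etale sequence to pin down the Siegel parabolic, surjectivity onto the Levi from the known monodromy of the \'etale part over the ordinary locus (the paper cites \cite[Prop.~7.2]{FC}), and Serre--Tate theory at $\bar x$ for surjectivity onto $U(\F_p)$. The only difference is presentational: where you invoke the Serre--Tate coordinates $q_{ij}$ explicitly and identify the cover as the Kummer cover of the $\sqrt[p]{q_{ij}}$, the paper packages the same deformation-theoretic input through the nondegeneracy criterion of Lemma~\ref{lem:nondeg} (nontriviality of the extension classes under every map $A \to k[\epsilon]$, citing \cite[3.2.2]{FKW}) to conclude that the span of the extension classes has the full dimension $\binom{g+1}{2}$.
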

\begin{proof} The first claim is well known. Indeed, the existence of the Weil pairing on $\A[p]$ implies that
$\A_{g,pN}$ corresponds to a symplectic representation. A comparison with the topological fundamental group shows
that the image of the {\em geometric} fundamental group $\pi_1(\A_{g,N}\otimes_K\bar K, x)$ is $\Sp_{2g}(\F_p),$
so the representation is surjective.

Now recall, that a {\em Siegel parabolic}  is the stabilizer of a maximal isotropic subspace in the underlying vector space of a symplectic representation.
Equivalently it is a parabolic with abelian unipotent radical. All such parabolics are conjugate. Over $\widehat \A_{g,N}^{\ord}$ the finite flat group scheme $\A[p]$ is an extension
\begin{equation}\label{eqn:ptorsionextension} 0 \rightarrow \A[p]^m \rightarrow \A[p] \rightarrow \A[p]^{\et} \rightarrow 0
\end{equation}
of an \'etale by a multiplicative group scheme, where \'etale locally $\A[p]^{\et} \iso (\Z/p\Z)^g$ and $\A[p]^m \iso \mu_p^g.$
The Weil pairing induces a map of group schemes
$$ \A[p] \times \A[p] \rightarrow \mu_p.$$
which identifies $\A[p]$ with its Cartier dual, and induces an isomorphism $\A[p]^m$ with the Cartier dual of $\A[p]^{\et}.$
In particular, this shows that $\A[p]^m_x \subset \A[p]_x$ corresponds to a maximal isotropic subspace under the Weil pairing.
This defines a Siegel parabolic such that (\ref{eqn:monodromy}) maps $ \pi_1(\widehat \A_{g,N}^{\ord,\an},x)$ into $P(\F_p).$
By \cite[Prop. 7.2]{FC}  the image of the composite
$$ \pi_1(\widehat \A_{g,N}^{\ord,\an},x) \rightarrow P(\F_p) \rightarrow (P/U)(\F_p)$$
is surjective. Hence it suffices to prove (2).

For this, we adopt the notation of \ref{subsec:extns} applied with $A$ as in (2). Since we are assuming $k$ is algebraically closed,
over $A,$ the group schemes $\A[p]^{\et}$ and $\A[p]^m$ are isomorphic to $(\Z/p\Z)^g$ and $\mu_p^g$ respectively.
In particular, the map (\ref{eqn:monodromyIII}) factors through $U(\F_p).$
Let $\UU \subset \Ext^1_X(\Z/p\Z, \mu_p)$ be the span of the $g^2$ syntomic extension classes defining the extension
(\ref{eqn:ptorsionextension}).
Note that $U(\F_p)$ is an elementary abelian $p$-group of rank $n = \dim_{\F_p} U = \dim \A_g = \binom{g+1}{2}.$
Any $\F_p$-linear map $s:U(\F_p) \rightarrow \F_p$ induces a representation
$$\pi_1(\Spec A[1/p], x) \rightarrow \mu_p(\bar K) \iso \F_p$$
(choosing $p\th$ root of unity), and hence a class in
$$c(s) \in \Ext^1_X(\Z/p\Z, \mu_p)\iso H^1(X, \F_p).$$
The subspace $\UU$ is the span of all the classes $c(s).$ This shows $\dim \UU \leq n,$
with equality only if  (\ref{eqn:monodromyIII}) is surjective.
However, by \cite[3.2.2]{FKW}, one sees that $\UU$ satisfies the conditions of Lemma \ref{lem:nondeg},
so that $\dim \UU = n,$ which completes the proof of the lemma.
\end{proof}

\begin{cor}\label{cor:monodromy} With the notation above, $\Hom_{\F_p}(U(\F_p),\F_p)$ is naturally identified with a
nondegenerate subspace $\UU \subset \Ext^1_X(\Z/p\Z, \mu_p).$
\end{cor}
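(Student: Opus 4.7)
The plan is to recognize that this corollary is essentially a bookkeeping consequence of what has already been established in the proof of Proposition~\ref{prop:monodromy}(2), and to present it as such. First, I would recall the construction from that proof: each $\F_p$-linear functional $s \colon U(\F_p) \to \F_p$ pulls back along the surjective representation $\pi_1(\Spec A[1/p], x) \to U(\F_p)$ of part (2), together with the chosen isomorphism $\mu_p(\bar K) \cong \F_p$, to a homomorphism $\pi_1(\Spec A[1/p], x) \to \mu_p(\bar K)$, and hence to a class
$$ c(s) \in H^1(X, \mu_p) \cong \Ext^1_X(\Z/p\Z, \mu_p). $$
Since $\UU$ was defined in that proof as the span of the classes $c(s)$, the assignment $s \mapsto c(s)$ is by construction a surjective $\F_p$-linear map $\Hom_{\F_p}(U(\F_p), \F_p) \twoheadrightarrow \UU$.

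Next I would carry out a dimension count. The space $\Hom_{\F_p}(U(\F_p), \F_p)$ has dimension $n = \binom{g+1}{2}$ because $U(\F_p)$ is an elementary abelian $p$-group of that rank. On the other hand, the proof of Proposition~\ref{prop:monodromy} showed $\dim_{\F_p} \UU = n$ by verifying the nondegeneracy hypothesis of Lemma~\ref{lem:nondeg} using \cite[3.2.2]{FKW}. Since the surjection $s \mapsto c(s)$ is between $\F_p$-vector spaces of the same finite dimension, it is an isomorphism; this is precisely the natural identification asserted by the corollary.

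Finally, the nondegeneracy of $\UU$ is not a new claim: it is exactly the statement that $\UU$ satisfies the hypotheses of Lemma~\ref{lem:nondeg}, which was the input already verified during the proof of Proposition~\ref{prop:monodromy}(2). The identification is natural in the sense that, once the geometric point $x$ and the isomorphism $\mu_p(\bar K) \cong \F_p$ are fixed, no further choices enter: the map $s \mapsto c(s)$ is induced functorially from the monodromy representation of part (2). I anticipate no substantive obstacle here, since every ingredient has been proved along the way; the only task is to record the implicit isomorphism explicitly.
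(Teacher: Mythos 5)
Your proposal is correct and follows the same route as the paper: the paper's proof likewise observes that the construction in Proposition~\ref{prop:monodromy} yields a natural map $\Hom_{\F_p}(U(\F_p),\F_p)\to\Ext^1_X(\Z/p\Z,\mu_p)$ whose image is the nondegenerate subspace $\UU$ of dimension $n=\dim_{\F_p}U$. Your explicit dimension count showing the surjection onto $\UU$ is in fact an isomorphism merely spells out what the paper leaves implicit.
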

\begin{proof} The proof of the Proposition~\ref{prop:monodromy} shows that there is a natural map
$$ \Hom_{\F_p}(U(\F_p),\F_p) \rightarrow \Ext^1_X(\Z/p\Z, \mu_p) $$
whose image $\UU$ is a nondegenerate subspace of dimension $n = \dim_{\F_p} U.$
\end{proof}

\subsection{Essential dimension}
\begin{para} We refer the reader to \cite[\S 2]{FKW} for the definitions and facts we will need about essential dimension and essential dimension at $p$. We remind the reader that for $K$ a field and $Y \rightarrow X$ a finite \'etale map of finite type $K$-schemes, $\ed(Y/X; p)$ denotes the essential dimension at $p$ of $Y_{\bar K} \rightarrow X_{\bar K},$ where $\bar K$ is an algebraic closure of $K.$

\end{para}
\begin{para} We continue to use the notation introduced above. In particular $A = \widehat \O_{\A_{g,N},\bar x}$ denotes the complete local ring which is a power series ring over $\O_K$ in $n = \binom{g+1}{2}$ variables.
\end{para}

\begin{lemma}\label{lem:basicedestimate} Let $g: A \rightarrow B$ and $f: C \rightarrow B $ be maps of power series rings over $\O_K,$
with $f$ a flat map. Suppose there exists a finite \'etale covering $Y' \rightarrow \Spec C[1/p]$ and an isomorphism of
\'etale coverings $\varepsilon: f^* Y' \iso g^*\A[p]$ over $\Spec B[1/p].$
Then
$$ \Im\!\!(\bar\gm_A/\bar\gm_A^2 \rightarrow \bar\gm_B/\bar\gm_B^2) \subset \Im\!\!(\bar\gm_C/\bar\gm_C^2 \rightarrow \bar\gm_B/\bar\gm_B^2).$$
In particular,
$$ \dim_k \bar\gm_C/\bar\gm_C^2 \geq  \dim_k \,  \Im\!\!(\bar\gm_A/\bar\gm_A^2 \rightarrow \bar\gm_B/\bar\gm_B^2). $$
\end{lemma}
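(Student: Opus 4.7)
The strategy is to translate the inclusion of images of cotangent spaces into an inclusion of subspaces of $\Ext^1(\Z/p\Z, \mu_p)$ via the natural isomorphism $\theta_A$, and then establish the required inclusion using the isomorphism $\varepsilon$ together with faithfully flat descent. By Corollary~\ref{cor:monodromy}, the subspace $\UU = \Hom_{\F_p}(U(\F_p),\F_p) \subset \Ext^1_{\Spec A[1/p]}(\Z/p\Z, \mu_p)$ is nondegenerate, and hence by Lemma~\ref{lem:nondeg}, the map $\theta_A\colon \UU \otimes_{\F_p} k \to \bar\gm_A/\bar\gm_A^2$ is an isomorphism. Since $\theta$ is manifestly natural in the ring (if $h \in 1 + \gm_A$ represents a class in $\Ext^1_{\Spec A[1/p]}(\Z/p\Z, \mu_p)$, then $g(h) \in 1 + \gm_B$ represents its pullback and $g(h) - 1 = g(h - 1)$), we have
\[
\Im(\bar\gm_A/\bar\gm_A^2 \to \bar\gm_B/\bar\gm_B^2) = \theta_B(g^*\UU \otimes_{\F_p} k),
\]
and symmetrically $\theta_B(f^*\UU' \otimes k) \subset \Im(\bar\gm_C/\bar\gm_C^2 \to \bar\gm_B/\bar\gm_B^2)$ for any subspace $\UU' \subset \Ext^1_{\Spec C[1/p]}(\Z/p\Z, \mu_p)$. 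Thus it suffices to produce such a $\UU'$ with $f^*\UU' \supset g^*\UU$.

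By the proof of Proposition~\ref{prop:monodromy}, $\UU$ is the span of the $g^2$ syntomic extension classes defining $0 \to \A[p]^m \to \A[p] \to \A[p]^{\et} \to 0$ over $\Spec A[1/p]$, where $\A[p]^m \cong \mu_p^g$ and $\A[p]^{\et} \cong (\Z/p\Z)^g$ since $k$ is algebraically closed. Pulling back along $g$, the subspace $g^*\UU$ is spanned by the analogous extension classes of the pulled-back extension over $\Spec B[1/p]$. I would next use $\varepsilon\colon f^*Y' \iso g^*\A[p]$ to transport the $\mu_p^g$-torsor structure on each non-identity component of $g^*\A[p]$ to $f^*Y'$. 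Because $f$ is a local flat map of power series rings over $\O_K$, it is faithfully flat, so fpqc descent of finite \'etale covers and of $\mu_p^g$-actions upon them allows this structure to descend from $f^*Y'$ to $Y'$ itself. This produces a subspace $\UU' \subset \Ext^1_{\Spec C[1/p]}(\Z/p\Z, \mu_p)$, spanned by $g^2$ extension classes on $Y'$, satisfying $f^*\UU' = g^*\UU$ by construction.

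Combining these steps yields the required image containment, and the dimension estimate is then immediate from $\dim_k\bar\gm_C/\bar\gm_C^2 \geq \dim_k\Im(\bar\gm_C/\bar\gm_C^2 \to \bar\gm_B/\bar\gm_B^2)$. I expect the main technical obstacle to be the descent step: one must check that the $\mu_p^g$-torsor structure transported across $\varepsilon$ to $f^*Y'$ is actually compatible with the canonical fpqc descent data on $f^*Y' = Y' \otimes_C B$, so that it genuinely descends to a structure on $Y'$ (and not merely to one on some base change of $Y'$). Given this compatibility, the rest of the argument is formal, relying only on the identification of extension classes with cotangent directions via $\theta$ and the naturality of the whole construction.
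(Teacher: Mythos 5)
Your overall strategy---transport the extension structure of $g^*\A[p]$ across $\varepsilon$, get it down to $C$, and then convert the resulting containment of spaces of extension classes into the asserted containment of images in $\bar\gm_B/\bar\gm_B^2$---is the same as the paper's, and the conversion step is essentially fine. Identifying $\Im\!\!(\bar\gm_A/\bar\gm_A^2\to\bar\gm_B/\bar\gm_B^2)$ with $\theta_B(g^*\UU\otimes k)$ via Corollary~\ref{cor:monodromy}, Lemma~\ref{lem:nondeg} and the naturality of $\theta$ is a legitimate, slightly more direct variant of the paper's dual argument, which instead tests against maps $h\colon B\to k[\epsilon]$ killing $\Im\!\!(\bar\gm_C/\bar\gm_C^2)$ and invokes \cite[3.2.2]{FKW}. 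Note, though, that even this half needs the classes in $\UU'$ to be syntomic over $C$ (representable by units of $C$, not merely of $C[1/p]$), since $\theta_C$ is only defined on $\Ext^1_{\Spec C}$; the paper gets this from \cite[3.1.4, 3.1.5]{FKW}.

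The genuine gap is the step you flag but do not resolve: producing $\UU'\subset\Ext^1_{\Spec C[1/p]}(\Z/p\Z,\mu_p)$ with $f^*\UU'\supset g^*\UU$. Faithfully flat descent along $f$ cannot do this. Descent recovers a structure on $Y'$ from one on $f^*Y'=Y'\otimes_C B$ only when the latter is equipped with a descent datum, i.e.\ when its two pullbacks to $(B\otimes_CB)[1/p]$ agree under the canonical isomorphism; the extension structure you transport across $\varepsilon$ carries no such compatibility, and an extension of $(\Z/p\Z)^g$ by $\mu_p^g$ has many automorphisms, so there is no rigidity to force it. Concretely: the $\mu_p$-valued characters of $\pi_1(\Spec B[1/p])$ spanning $g^*\UU$ do factor through the image $H$ of $\pi_1(\Spec B[1/p])$ in $\pi_1(\Spec C[1/p])$ (because the monodromy of $f^*Y'$ does), but $H$ may be a proper closed subgroup, and a character of $H$ need not extend to $\pi_1(\Spec C[1/p])$; only characters that so extend lie in $f^*\Ext^1_{\Spec C[1/p]}(\Z/p\Z,\mu_p)$. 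The paper closes exactly this gap by invoking \cite[2.1.8]{FKW}, a twisting/reduction-of-structure statement permitting one to \emph{replace} $Y'$ by a cover of $\Spec C[1/p]$ that genuinely carries the structure of an extension of a constant \'etale by a constant multiplicative group scheme, compatibly with $\varepsilon$. That is a substantive lemma, not an instance of descent along $f$, and without it (or a substitute) your argument does not go through.
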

\begin{proof} By \cite[2.1.8]{FKW}, we may assume that $Y'$ is an extension of a constant \'etale group scheme by a
constant multiplicative group scheme, and that $\varepsilon$ is an isomorphism of extensions.
By \cite[3.1.4, 3.1.5]{FKW}, the extension $Y'$ is syntomic, and we may assume that
the isomorphism $f^*Y' \iso g^*\A[p]$ extends to an isomorphism of finite flat group schemes (which automatically respects the extension structure) over $\Spec B.$

Now let $h: B \rightarrow k[\epsilon]$ be any map which vanishes on the image of $\bar \gm_C/\bar \gm_C^2,$ so that $h$ induces
the constant map $C \rightarrow k.$ Then $h^*f^*Y' \iso h^*g^*\A[p]$ is a split extension over $\Spec k[\epsilon].$
It follows from \cite[3.2.2]{FKW} that $h\circ g(\gm_A) = 0,$ which proves the inclusion in the lemma.
\end{proof}

\begin{para} We introduce the following notation. For a map $f:X \rightarrow Y$ of smooth $k$-schemes, we let
$$ r(f) = \max_{x \in X(k)} \dim_k \Im\!\!(\bar \gm_{f(x)}/\bar \gm^2_{f(x)} \rightarrow \bar \gm_x/\bar \gm^2_x) $$
For $f$ a map of smooth $\O_K$-schemes, set $r(f) = r(f\otimes k).$ Note that $r(f)$ does not change if we restrict $f$ to a dense open subset in $X.$
\end{para}

 \begin{prop}\label{prop:essdimAg} Let $\ZZ$ be a smooth, connected $\O_K$-scheme, and let 
 $\ZZ \rightarrow A_{g,N/\O_K}$ be a map of $\O_K$-schemes such that the image of the special fiber, $\ZZ_k$, meets the ordinary locus
 $\A_{g,N}^{\ord} \subset \A_{g,N/k}.$
Then
$$ \ed(\A[p]|_{\ZZ_K}/\ZZ_K; p) \geq r(f) $$
\end{prop}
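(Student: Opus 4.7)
The plan is to reduce the lower bound to an application of Lemma~\ref{lem:basicedestimate} at a judiciously chosen local setup, after spreading out an essential-dimension compression in characteristic zero to an $\O_K$-model. First I choose a closed point $\bar z \in \ZZ_k$ attaining the maximum defining $r(f)$, whose image $\bar x := f(\bar z)$ lies in $\A_{g,N}^{\ord}$ (possible by the hypothesis on the image of the special fiber). Setting $A := \widehat\O_{\A_{g,N},\bar x}$ and $B := \widehat\O_{\ZZ,\bar z}$, both power series rings over $\O_K$ by smoothness, the completed map $g : A \to B$ satisfies $\dim_k \Im(\bar\gm_A/\bar\gm_A^2 \to \bar\gm_B/\bar\gm_B^2) = r(f)$.

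Set $d := \ed(\A[p]|_{\ZZ_K}/\ZZ_K; p)$; the target is $d \ge r(f)$. Unfolding the definition of essential dimension at $p$ yields a dominant generically finite prime-to-$p$ morphism $\ZZ'_K \to \ZZ_K$, a smooth $K$-variety $Y_K$ with $\dim Y_K = d$, a finite \'etale cover $\tilde Y_K \to Y_K$, and a morphism $\varphi_K : \ZZ'_K \to Y_K$ such that $\A[p]|_{\ZZ'_K}$ is isomorphic to $\varphi_K^*\tilde Y_K$ as a cover of $\ZZ'_K$. I spread this data out to $\O_K$: take the normalization of $\ZZ$ in $\ZZ'_K$ (and a resolution) to get a finite dominant $\ZZ' \to \ZZ$ over $\O_K$, choose a smooth proper $\O_K$-model $Y$ of a smooth compactification of $Y_K$, and use the valuative criterion together with the normality of $\ZZ'$ to extend $\varphi_K$ to a morphism $\varphi : \ZZ' \to Y$ defined near the generic points of the special fiber.

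Next I pick a closed point $\bar z' \in \ZZ'_k$ above $\bar z$ satisfying (a) $\ZZ' \to \ZZ$ is \'etale at $\bar z'$, and (b) $\varphi$ is flat at $\bar z'$. For (a), since the generic degree of $\ZZ' \to \ZZ$ is prime to $p$, all ramification along the special fiber is tame; refining by a further prime-to-$p$ cover and invoking Abhyankar's lemma lets us trivialize this ramification in a neighborhood of $\bar z$. For (b), generic flatness holds on a dense open, and I restrict to that open to find an appropriate $\bar z'$. Setting $\bar y := \varphi(\bar z')$, $B' := \widehat\O_{\ZZ',\bar z'}$, and $C := \widehat\O_{Y,\bar y}$, both $B'$ and $C$ are power series rings over $\O_K$, the induced map $C \to B'$ is flat, and $\dim_k \bar\gm_C/\bar\gm_C^2 \le d$.

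The identification of covers over $\Spec B'[1/p]$ now supplies the hypothesis of Lemma~\ref{lem:basicedestimate} for the maps $A \to B \to B'$ and $C \to B'$, and applying the lemma yields
\[
\Im(\bar\gm_A/\bar\gm_A^2 \to \bar\gm_{B'}/\bar\gm_{B'}^2) \subset \Im(\bar\gm_C/\bar\gm_C^2 \to \bar\gm_{B'}/\bar\gm_{B'}^2).
\]
By (a) the map $\bar\gm_B/\bar\gm_B^2 \to \bar\gm_{B'}/\bar\gm_{B'}^2$ is an isomorphism, so the left-hand side has dimension $r(f)$; the right-hand side has dimension at most $d$. Hence $d \ge r(f)$, proving the proposition. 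The main obstacle is the local arrangement in (a): showing that the prime-to-$p$ cover $\ZZ' \to \ZZ$ can be refined to be \'etale at a point above $\bar z$ is a tame-ramification argument of Abhyankar--Grothendieck--Murre type, and it requires some care because $\bar z$ need not be a generic point of the ramification locus.
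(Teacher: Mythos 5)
Your overall architecture---spread the prime-to-$p$ compression out to an $\O_K$-model and feed a completed local picture into Lemma~\ref{lem:basicedestimate}---is the right one, and is what the paper's proof-by-reference to \cite[Theorem 3.2.6]{FKW} intends. The genuine gap is your step (a). It is not true in general that the accessory cover can be refined by a further prime-to-$p$ cover so as to become \'etale at some point over $\bar z$. First, the claim that ``all ramification along the special fiber is tame'' is false: prime-to-$p$ degree only forces the sum $\sum_i e_i f_i$ over a codimension-one fiber to be prime to $p$, so \emph{some} point in the fiber has prime-to-$p$ local degree, but other points can be wildly ramified (a degree $p+1$ cover can have a vertical fiber consisting of one point with $e=p$ and one with $e=1$). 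Second, Abhyankar's lemma requires the branch locus to be a normal crossings divisor through $\bar z$; arranging this in general requires blowing up $\ZZ$ at $\bar z$, which destroys the very point at which $r(f)$ is attained and drops the rank of the differential along the exceptional locus. Third, even in tame situations the desired conclusion fails: for $\ZZ=\Spec \O_K[x,y]$ and the degree-$2$ cover $z^2=x^2-y^3$ with $p$ odd, any point $w$ over the origin of the special fiber at which a refinement $W\to\ZZ$ is smooth with injective cotangent map is necessarily a point where $W\to\ZZ$ is \'etale, and then the pulled-back cover near $w$ is the original singular double cover, not \'etale. Since Lemma~\ref{lem:basicedestimate} requires $B$ to be a power series ring over $\O_K$, your argument cannot be completed as written.

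The argument being invoked from \cite{FKW} instead accepts that $B'=\widehat\O_{\ZZ',\bar z'}$ is merely a complete normal local domain: one chooses $\bar z'$ over $\bar z$ so that $\Spec B'[1/p]\to \Spec B[1/p]$ has degree prime to $p$ (possible because the local degrees of the normalization over $\bar z$ sum to the total degree), and then uses the norm on $(\cdot)^\times/((\cdot)^\times)^p \cong \Ext^1(\Z/p\Z,\mu_p)$, for which the composite of restriction followed by $\Nm$ is multiplication by the degree and hence invertible mod $p$, to transfer the statement ``the extension classes of $\A[p]$ become pulled back from $C$ over $B'[1/p]$'' into usable information over $B[1/p]$ itself. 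That transfer is the only place the prime-to-$p$ hypothesis enters, and it is exactly the step your Abhyankar refinement was meant to supply; Lemma~\ref{lem:basicedestimate} then provides only the final comparison of cotangent spaces. A secondary issue in your step (b): generic flatness of $\varphi$ yields a dense open of $\ZZ'$, but a dense open can miss the special fiber entirely (e.g.\ $\ZZ'[1/p]$), so flatness of $C\to B'$ at a point lying over $\bar z$ also requires an argument rather than a citation of generic flatness.
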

\begin{proof}  The proof of this is almost the same as that of Theorem \cite[3.2.6]{FKW}. The only difference is that we use
Lemma \ref{lem:basicedestimate} instead of Lemma 3.2.4 of {\it loc.~cit} at the end of the proof.
\end{proof}

\begin{example}
	Let $\H_g$ denote the moduli of hyperelliptic curves of genus $g$.  Let $\H_g[n]$ be the moduli of pairs $(C,\B)$ where $C$ is a hyperelliptic genus $g$ curve and $\B$ is a symplectic basis for $H_1(C;\Z/n\Z)$. Let $\tau\colon \H_{g/\O_K}\to\A_{g/\O_K}$ denote the Torelli map.  By \cite[Theorem 1.2]{Lan}, $\tau$ is an embedding only when the characteristic of $k$ is prime to $2$; when $k$ is of characteristic 2, $r(\tau)=g+1$. Because of this, \cite[Theorem 3.2.6]{FKW} does not give a lower bound on $\ed(\H_g[2]/\H_g;2)$. Using Proposition~\ref{prop:essdimAg} above instead, as well as the argument of \cite[Corollary 3.2.7]{FKW}, we obtain
	\[
	\ed(\H_g[2]/\H_g;2)\ge g+1.
	\]
\end{example}

\begin{rem}
	More generally, Proposition~\ref{prop:essdimAg} gives an arithmetic tool for obtaining lower bounds on the essential dimension at $p$, analogous to the ``fixed point method'' (cf. \cite{ReICM}). As forthcoming work of Brosnan-Fakhrudin-Reichstein \cite{BFR} demonstrates, the fixed point method applied to the toroidal boundary recovers the bounds of Theorem~\ref{it:main} and similar bounds for non-compact locally symmetric varieties (including those not of Hodge type); it also allows one to use toroidal boundary components other than those corresponding to Siegel parabolics. However, as remarked in \cite{FKW}, we are not aware of methods besides Proposition~\ref{prop:essdimAg} that apply to unramified nonabelian covers of compact varieties.
\end{rem}

\begin{para} Proposition \ref{prop:monodromy} implies that the monodromy group of $\A_{g,pN} \rightarrow \A_{g,N}$ can be identified with $\Sp_{2g}(\F_p)$.  Fix such an identification. Let $G$ be a subgroup of $\Sp_{2g}(\F_p)\subset\Sp_{2g}(\Z/pN\Z).$  Denote by $\A_{g,G} \rightarrow \A_{g,N}$ the 
finite, normal, covering corresponding to $G.$
\end{para}

\begin{thm}\label{thm:essdimsbgpG} Let $p$ be a prime, and let $N\geq 3$ be prime to $p$. $G\subset \Sp_{2g}(\F_p)\subset\Sp_{2g}(\Z/pN\Z)$. Then
$$ \ed(\A[p]|_{\A_{g,G}}/\A_{g,G}; p) \geq \max_{U} \dim_{\F_p} U\cap G, $$
where the maximum on the right hand side is over all unipotent radicals of Siegel parabolics in $\Sp_{2g}(\F_p)$.
\end{thm}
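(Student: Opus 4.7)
The plan is to apply Proposition~\ref{prop:essdimAg} with $\ZZ$ a dense open of $\A_{g,G}$ whose special fibre meets $\A_{g,N}^{\ord}$, and $f\colon \ZZ \to \A_{g,N/\O_K}$ the natural map; since essential dimension is invariant under restriction to a dense open, this reduces the theorem to showing $r(f) \geq m$, where $m := \dim_{\F_p}(G \cap U(\F_p))$ for a prescribed Siegel unipotent $U$ attaining the maximum on the right-hand side.

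The heart of the argument is a local computation at an ordinary $k$-point $\bar x \in \A_{g,N}^{\ord}$. Writing $A = \widehat\O_{\A_{g,N},\bar x}$, Proposition~\ref{prop:monodromy}(2) produces a surjection $\pi_1(\Spec A[1/p]) \twoheadrightarrow U'(\F_p)$ onto the Siegel unipotent $U'$ stabilising the multiplicative part of $\A[p]_{\bar x}$. Because all Siegel parabolics are conjugate in $\Sp_{2g}(\F_p)$ and the geometric monodromy of $\A_{g,pN} \to \A_{g,N}$ is full, I can adjust the choice of geometric base point so that $U' = U$. Via Corollary~\ref{cor:monodromy}, the space $\UU := \Hom_{\F_p}(U(\F_p),\F_p)$ is identified with a nondegenerate subspace of $\Ext^1_{\Spec A[1/p]}(\Z/p\Z,\mu_p)$, and the connected component of $\A_{g,pN} \times_{\A_{g,N}} \Spec A$ through the chosen lift of $\bar x$ is precisely the Kummer cover $\Spec A(\UU)[1/p]$.

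Since $U$ is abelian, the subgroup $H := G \cap U(\F_p)$ is automatically normal in $U(\F_p)$, and the component of $\A_{g,G} \times_{\A_{g,N}} \Spec A$ above $\bar x$ is the further quotient $\Spec A(\UU')[1/p]$, where $\UU' := H^\perp \subset \UU$ consists of those homomorphisms $U(\F_p) \to \F_p$ that factor through $U(\F_p)/H$. Lemma~\ref{lem:span} applied to $\UU'$ then simultaneously confirms that the completed local ring of $\ZZ$ at a point $\tilde x$ above $\bar x$ equals the power series ring $A(\UU')$ (so $\ZZ$ is indeed $\O_K$-smooth at such points) and yields
\[
\dim_k \Im\!\bigl(\bar\gm_A/\bar\gm_A^2 \to \bar\gm_{A(\UU')}/\bar\gm_{A(\UU')}^2\bigr) = \dim_{\F_p}(\UU/\UU') = \dim_{\F_p} H = m.
\]
Thus $r(f) \geq m$; Proposition~\ref{prop:essdimAg} gives $\ed(\A[p]|_{\A_{g,G}}/\A_{g,G};p) \geq m$, and taking the supremum over all Siegel unipotents $U$ concludes the proof.

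I expect the main obstacle to be the clean identification of the component of $\A_{g,G} \times_{\A_{g,N}} \Spec A$ through $\tilde x$ with the Kummer cover $A \to A(H^\perp)$: this rests both on the abelianness of Siegel unipotents (so that $H$ is automatically normal and the quotient corresponds to an annihilator subspace) and on the flexibility, via a change of geometric base point in the ordinary locus, to realise any prescribed Siegel $U$ as the one produced by Proposition~\ref{prop:monodromy}(2). The smoothness of $\ZZ$ is a secondary technical point, handled automatically by Lemma~\ref{lem:span}.
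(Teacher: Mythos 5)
Your proposal is correct and follows essentially the same route as the paper: apply Proposition~\ref{prop:essdimAg} after using Proposition~\ref{prop:monodromy}(2), Corollary~\ref{cor:monodromy} and Lemma~\ref{lem:span} to compute the completed local ring of the cover at a point over an ordinary point, with the conjugacy of Siegel parabolics used to align the prescribed $U$ with the one arising from the local structure. The only (cosmetic) difference is that the paper first replaces $G$ by $G\cap U(\F_p)$ and works with $\A_{g,U\cap G}$, whereas you identify the relevant component of $\A_{g,G}\times_{\A_{g,N}}\Spec A$ directly as the Kummer cover attached to $H^{\perp}$; both yield the same local ring $A(\UU')$ and the same bound on $r(f)$.
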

\begin{proof} 
	Let $U_0\subset\Sp_{2g}(\F_p)$ be an abelian unipotent subgroup such that $\dim_{\F_p} U_0\cap G$ achieves the maximum. Let $U\subset\Sp_{2g}/_{\F_p}$ be the abelian unipotent subgroup defined in Proposition~\ref{prop:monodromy}. Because all Siegel parabolics are conjugate in $\Sp_{2g}(\F_p)$, there exists a conjugate of $G$, denoted $G'\subset\Sp_{2g}(\F_p)$, such that 
	\[
		\dim_{\F_p} U(\F_p)\cap G'=\dim_{\F_p} U_0\cap G.
	\]
	 Because conjugate subgroups give isomorphic covers, and because $\ed(-;p)$ is a birational invariant, 
	\[
		\ed(\A[p]|_{\A_{g,N,G}}/\A_{g,N,G};p)=\ed(\A[p]|_{\A_{g,N,G'}}/\A_{g,N,G'};p).
	\]
	It therefore suffices to prove the theorem under the assumption that $U_0=U(\F_p)$. For this, it suffices to consider the case $G = U(\F_p)\cap G.$ In the following we slightly abuse notation and write $U$ for $U(\F_p).$
	
Let $x \in \A_{g,N}(k)$ be a point in the ordinary locus. By (2) of  Proposition \ref{prop:monodromy}, there exists $y \in \A_{g,pN}(k)$ and $x' \in A_{g,N,U}(k)$ with $y$ mapping to $x'$ and $x,$ such that the natural map
$$ A : = \widehat \O_{\A_{g,N}, x} \rightarrow \widehat \O_{\A_{g,N,U}, x'}$$
is an isomorphism, and such that, if $B =  \widehat \O_{\A_{g,pN}, y},$ then
$$ \Spec B[1/p] \rightarrow \Spec A[1/p] $$
is a $U$-covering.

Let $\UU = \Hom_{\F_p}(U,\F_p),$ and $\UU'_G = \Hom_{\F_p}(U/(U\cap G), \F_p).$
By Corollary \ref{cor:monodromy}, $\UU$ is identified with a nondegenerate subspace of $\Ext^1_X(\Z/p\Z, \mu_p)$
where $X = \Spec A[1/p].$ Now let $A' = \widehat \O_{\A_{g,N,U\cap G},\bar x''},$ where $x''$ denotes the image of $y$
in $\A_{g,N,U\cap G}.$ Since $\A_{g,N,U\cap G}$ is normal, using the notation of \ref{para:extns}, we have $A' = A(\UU'_G).$
Hence, by Lemma \ref{lem:span}, we have
$$ \dim_k \, \Im\!\!(\bar\gm_A/\bar\gm_A^2 \rightarrow \bar \gm_{A'}/ \gm^2_{A'}) = \dim_{\F_p} U\cap G,$$
and $A'$ is a power series ring over $\O_K.$

Since $x$ was any point in the ordinary locus, this shows that $r(f) \geq \dim_{\F_p} U\cap G,$ where
$f:\A_{g,N,U\cap G} \rightarrow \A_{g,N},$ and that $\A_{g,N,U\cap G}$ is smooth over $\O_K$, over the ordinary locus of
$\A_{g,N}.$ Combining this with Proposition \ref{prop:essdimAg} proves the theorem.
\end{proof}

\section{Modular symplectic representations of finite groups}\label{s:group}

\subsection{General Finite Groups}
Let $p$ be prime, $G$ a finite group and $V$ a faithful, finite-dimensional $G$-representation over $\F_p.$ 
The pairing
$$ \ev\colon V\otimes V^\vee\to\F_p $$
extends to a $G$-invariant symplectic form on $V\oplus V^\vee$. We refer to the associated representation
\[
    G\to \Sp(V\oplus V^\vee)
\]
as the {\em diagonal (symplectic) representation} associated to $V$.

\begin{lemma}\label{p:gensymp} Let $H\subset G$ be an elementary abelian $p$-subgroup, such that $H$ maps to the unipotent radical of a maximal parabolic in $\GL(V).$ Then there exists a Siegel parabolic of $P\subset \Sp(V\oplus V^\vee)$ with unipotent radical $U$ such that, under the diagonal representation associated to $V$,
    \begin{equation*}
        H \subset U\cap G.
    \end{equation*}
\end{lemma}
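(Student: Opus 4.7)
The plan is to identify the correct subspace of $V \oplus V^\vee$ whose stabilizer gives the desired Siegel parabolic, and then to check directly that $H$ lies in its unipotent radical. Since $H$ maps into the unipotent radical of a maximal parabolic of $\GL(V)$, and every such parabolic is the stabilizer of some subspace, I would first fix a subspace $W \subset V$ so that every $h \in H$ acts trivially on $W$ and on $V/W$. This is the only information about $H$ that I expect to need.

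Next I would form the annihilator $W^\perp = \{f \in V^\vee : f|_W = 0\}$ and set $L_W := W \oplus W^\perp \subset V \oplus V^\vee$. A quick calculation with the hyperbolic symplectic form $\omega\bigl((v_1,f_1),(v_2,f_2)\bigr) = f_2(v_1) - f_1(v_2)$ shows that $L_W$ is totally isotropic, and a dimension count ($\dim W + (\dim V - \dim W) = \dim V$) shows it is maximal isotropic. Let $P \subset \Sp(V \oplus V^\vee)$ be its stabilizer, a Siegel parabolic, and let $U$ be its unipotent radical. The key characterization I will use is that $U$ consists of precisely those $g \in P$ which act trivially both on $L_W$ and on the quotient $(V \oplus V^\vee)/L_W$; this is what makes $U$ abelian and is equivalent to saying $g$ becomes the identity on the associated graded.

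The main (and essentially only) task is then to verify that each $h \in H$, acting on $V \oplus V^\vee$ via the diagonal representation (i.e. $h$ on $V$, and $(h^{-1})^t$ on $V^\vee$), satisfies these two triviality conditions. On the $V$-summand, triviality on $W$ is immediate and triviality on $V/W$ is the hypothesis. On the $V^\vee$-summand, for $f \in W^\perp$ and $w \in W$ we have $(f \circ h^{-1})(w) = f(w) = 0$ since $h^{-1}$ fixes $W$ pointwise, so $W^\perp$ is preserved; moreover, for any $v \in V$, the class $h^{-1}v - v$ lies in $W$ (because $h$ acts trivially on $V/W$), so $f(h^{-1}v) = f(v)$, i.e. $h$ acts trivially on $W^\perp$. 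The dual identification $V^\vee/W^\perp \cong W^\vee$ then shows $h$ acts trivially on $V^\vee/W^\perp$ as well, since $h$ fixes $W$ pointwise. Combining these gives $h|_{L_W} = \id$ and $h|_{(V \oplus V^\vee)/L_W} = \id$, so $h \in U$, proving $H \subset U \cap G$. I do not anticipate a serious obstacle: the entire proof is a direct unwinding of the definitions, with the only point requiring care being the correct identification of $U$ as the kernel of the map $P \to \GL(L_W)$.
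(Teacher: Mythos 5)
Your proposal is correct and follows essentially the same route as the paper: both take the Lagrangian $W\oplus W^\perp$ associated to the subspace $W$ defining the maximal parabolic, and both identify the unipotent radical of its stabilizer as the elements acting trivially on the associated graded. The only difference is presentational — you verify $h\in U$ by a direct computation on $W$, $V/W$, $W^\perp$, $V^\vee/W^\perp$, whereas the paper phrases the same fact as the identity $\GL(V)\cap U(W\oplus W^\perp)=U(W)$ — so there is nothing to correct.
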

\begin{proof}  Any maximal parabolic  in $\GL(V)$ is the stabilizer $P(W)$ of a subspace $W\subset V.$  
Let $U(W)$ denote the unipotent radical of $P(W).$
Let $W^\perp\subset V^\vee$ denote the dual subspace. 
Then $W\oplus W^\perp$ is a Lagrangian subspace of $V\oplus V^\vee$, and
    \begin{equation*}
        \GL(V)\cap\Stab_{\Sp(V\oplus V^\vee)}(W\oplus W^\perp)=\Stab_{\GL(V)}(W) = P(W).
    \end{equation*}
    Hence 
     \begin{equation*}
        \GL(V)\cap U(W\oplus W^\perp)=U(W),
    \end{equation*}
    where $U(W\oplus W^\perp)$ is the unipotent radical of 
    $\Stab_{\Sp(V\oplus V^\vee)}(W\oplus W^\perp),$ the Siegel parabolic 
    corresponding to $ W\oplus W^\perp.$
    In particular $H \subset U(W) \subset U(W\oplus W^\perp),$ 
    the Siegel parabolic corresponding to $W\oplus W^\perp.$
     \end{proof}

\begin{para} Let
\[ 
s_p(G): = \max_{U\subset \GL(V)}\dim_{\F_p} U \cap G
\]
where the maximum is taken over all faithful representations $G$ of $V,$ and unipotents $U$ of maximal parabolics in $\GL(V).$ Proposition~\ref{p:gensymp} and Theorem~\ref{thm:essdimsbgpG} immediately imply the following. 
\end{para}

\begin{corollary}\label{c:gen}
    For some $g,$ there exists a congruence cover $\A_{g,p}\to \A_{g,G}$ with
    \[
        \ed(\A_{g,p}/\A_{g,G};p)\ge s_p(G).
    \]
\end{corollary}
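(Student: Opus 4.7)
The plan is to chain together Lemma~\ref{p:gensymp} and Theorem~\ref{thm:essdimsbgpG} after making a careful choice of faithful representation. First, I would choose a faithful $\F_p$-representation $V$ of $G$ and a maximal parabolic $P(W) \subset \GL(V)$, with unipotent radical $U(W)$, that achieve the maximum in the definition of $s_p(G)$; that is,
\[
\dim_{\F_p}\bigl(U(W)\cap G\bigr) = s_p(G).
\]
Since the unipotent radical of a maximal parabolic of $\GL(V)$ is an abelian group isomorphic to a matrix space over $\F_p$, the intersection $H := U(W)\cap G$ is an elementary abelian $p$-subgroup of $G$, automatically contained in the unipotent radical of a maximal parabolic of $\GL(V)$. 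This is precisely the hypothesis needed to feed into Lemma~\ref{p:gensymp}.

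Next, I would form the diagonal symplectic representation $G \hookrightarrow \Sp(V\oplus V^\vee)$ and apply Lemma~\ref{p:gensymp} to $H$. This produces a Siegel parabolic $P \subset \Sp(V\oplus V^\vee)$ with unipotent radical $U$ such that $H \subset U \cap G$ inside $\Sp(V\oplus V^\vee)(\F_p) = \Sp_{2g}(\F_p)$, where $g := \dim_{\F_p} V$. In particular
\[
\dim_{\F_p}(U \cap G) \;\geq\; \dim_{\F_p} H \;=\; s_p(G).
\]

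Finally, I would pick any integer $N \geq 3$ coprime to $p$ and apply Theorem~\ref{thm:essdimsbgpG} to the embedding $G \hookrightarrow \Sp_{2g}(\F_p) \subset \Sp_{2g}(\Z/pN\Z)$ just produced, giving the congruence cover $\A_{g,pN} \to \A_{g,G}$ together with the lower bound
\[
\ed(\A_{g,pN}/\A_{g,G};\, p) \;\geq\; \max_{U'}\dim_{\F_p}(U' \cap G) \;\geq\; \dim_{\F_p}(U \cap G) \;\geq\; s_p(G),
\]
where $U'$ ranges over unipotent radicals of Siegel parabolics in $\Sp_{2g}(\F_p)$. Since $N$ is coprime to $p$, the intermediate cover $\A_{g,pN}\to \A_{g,p}$ has degree prime to $p$, so essential dimension at $p$ is unchanged and the result can be phrased in terms of $\A_{g,p}\to \A_{g,G}$, as stated.

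There is no genuine obstacle here: the two ingredients have been engineered to match. The only mild subtlety is the bookkeeping between the level-$N$ structure used in Theorem~\ref{thm:essdimsbgpG} and the ``$\A_{g,p}$'' notation in the corollary, which is handled by the standard observation that ed at $p$ is insensitive to prime-to-$p$ covers.
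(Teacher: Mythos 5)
Your proof is correct and is essentially the paper's own argument: the paper states that Lemma~\ref{p:gensymp} and Theorem~\ref{thm:essdimsbgpG} ``immediately imply'' the corollary, and your chain---picking $(V,U(W))$ realizing $s_p(G)$, feeding $H=U(W)\cap G$ into Lemma~\ref{p:gensymp} to land inside a Siegel unipotent of $\Sp(V\oplus V^\vee)$, then invoking Theorem~\ref{thm:essdimsbgpG}---is exactly that implication. Your closing remark reconciling the $\A_{g,p}$ notation with the $\A_{g,pN}$ cover via prime-to-$p$ insensitivity correctly identifies the (mild) notational gap in the corollary's statement.
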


%

\begin{remark} 
 While Corollary~\ref{c:gen} implies that $\ed(G;p)\ge s_p(G)$, this is not hard to show directly, e.g. by \cite[Lemma 4.1]{BR}. 
 In fact,  let 
 \[
    r_p(G):=\max_{H\subset G}\dim_{\F_p}H
\]
where the maximum is taken over all elementary abelian $p$-groups $H\subset G.$  
Then $\ed(G;p) \ge  r_p(G) \ge s_p(G).$
The novelty of Corollary~\ref{c:gen} is that a) this lower bound can be realized by an explicit congruence cover;  and b) the congruence cover, and thus the lower bound, comes from modular representation theory at the relevant prime, rather than from ordinary representation theory in characteristic 0 (as in e.g. \cite{BR} or the theorem of Karpenko-Merkurjev \cite{KarMer}). 

The corollary is most interesting in those cases where $s_p(G)$ is large. In the remainder of this section we give examples where 
$s_p(G)$ is equal to, or at least very close to $r_p(G).$ These consist of the case of alternating groups when $p=2,$ and the case where 
$G$ is the $\F_q$-points of a split semisimple group of classical type.
\end{remark}

\subsection{The Groups $S_n$ and $A_n$} \label{s:An}
We now specialize to the symmetric groups $S_n$ and the alternating groups $A_n.$ 
%
%
%
%
%

\begin{para} We would like to apply Corollary \ref{c:gen} to the case of symmetric and alternating groups.
Meyer-Reichstein \cite[Corollary 4.2]{MR} proved that $\ed(S_n;p)=r_p(S_n)$ and similarly for $A_n$ for all $n$ and $p$. 
However, in Appendix~\ref{appendix}, Harman shows that for $p > 2,$ $s_p(S_n)<r_p(S_n)$ and similarly for $A_n$. The purpose of this 
section is to show - see Proposition \ref{p:di} below - that one has $s_2(S_n) = r_2(S_n)$ for all $n$, and $s_2(A_n)=r_2(A_n)$ (resp. $s_2(A_n)=r_2(A_n)-1$) for $n=2,3$  (resp.~$0,1$) modulo $4.$ This uses a remarkable mod $2$ symplectic representation of $S_n,$ discovered by 
Dickson. Harmon's results imply that for $n \geq 5,$ this is the only mod $2$ representations for which the unipotent 
of a maximal parabolic meets $S_n$ in a maximal elementary abelian $2$-group. 

 Recall the ``permutation irrep'' $V$ of $S_n$ over $\F_p$.\footnote{The results of Dickson \cite{Di} and Wagner \cites{Wa1,Wa2} show that the permutation irrep is a minimal-dimensional faithful irrep for $n>8$ and $p=2$, or for $n>6$ and $p$ odd.} For $p\nmid n$ this is the analogue over $\F_p$ of the standard permutation irrep in characteristic 0, i.e. the invariant hyperplane
\[
    V=\{(a_1,\ldots,a_n)\in\F_p^n~|~\sum a_i=0\}
\]
For $p\mid n$ the diagonal line $\Delta:=\{(a,\ldots,a)\}\subset \F_p^n$ is an invariant subspace of the invariant hyperplane, and
\[
    V=\{(a_1,\ldots,a_n)\in\F_p^n~|~\sum a_i=0\}/\Delta.
\]
Dickson \cite{Di} showed that over $\F_2$, the permutation irrep of $S_n$ is a symplectic representation. Let
\[
	d_n:=\lceil \frac{n}{2}\rceil -1,
\]
so that Dickson's representation gives a ``Dickson embedding'' $S_n\subset\Sp_{2d_n}(\F_2)$.

\end{para}

\begin{prop}\label{p:di}\mbox{} Let $N\ge 3$ be odd. For all $n\geq 2$, consider the Dickson embedding $S_n\subset\Sp_{2d_n}(\F_2)\subset \Sp_{2d_n}(\Z/2N\Z)$. There exists a Siegel parabolic with unipotent radical $U$ such that 
        	\begin{align*}
        		\dim_{\F_2}U\cap S_n&=\lfloor \frac{n}{2}\rfloor,\\
        		\dim_{\F_2}U\cap A_n&=\lfloor \frac{n}{2}\rfloor-1.
        	\end{align*}
        By Theorem~\ref{thm:essdimsbgpG}, for all $n\geq 1$: 
            \begin{align*}
            	\ed(\A_{d_n,2N}/\A_{d_n,S_n};2)&=\lfloor \frac{n}{2}\rfloor=\ed(S_n;2),\\
                \ed(\A_{d_n,2N}/\A_{d_n,A_n};2)&=\lfloor \frac{n}{2}\rfloor -1,
            \end{align*}
            i.e.
            \begin{equation*}
            	\ed(\A_{d_n,2N}/\A_{d_n,A_n};2)=\left\{\begin{array}{ll}
            	 	\ed(A_n;2)-1 & n=0,1 \text{ mod }4\\
            	 	\ed(A_n;2) & n=2,3 \text{ mod }4\\
            	\end{array}\right.
            \end{equation*}
            
\end{prop}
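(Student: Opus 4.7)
The plan is to construct an explicit Siegel parabolic whose unipotent radical contains a maximal elementary abelian $2$-subgroup of $S_n$, using the natural Lagrangian built from pairs of coordinates inside the Dickson representation. The essential dimension statements then follow from Theorem~\ref{thm:essdimsbgpG} combined with the Meyer-Reichstein equality $\ed(G;2)=r_2(G)$ from \cite{MR}.

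First, recall that Dickson's representation $V$ (the permutation irrep over $\F_2$) is $\{v\in\F_2^n : \sum v_i = 0\}$ when $n$ is odd, and the quotient of this hyperplane by the diagonal line $\Delta = \langle(1,\ldots,1)\rangle$ when $n$ is even. The standard dot product $\omega(v,w) = \sum v_iw_i$ on $\F_2^n$ descends to $V$, and I would verify that it is alternating (since $\omega(v,v) = \sum v_i = 0$ on $\ker(\sum)$) and non-degenerate (since its radical on $\ker(\sum)$ is precisely $\Delta$ when $n$ is even and $0$ when $n$ is odd), realizing Dickson's symplectic structure of rank $2d_n$. Now take $W\subset V$ to be the subspace spanned by the images of $e_{2i-1}+e_{2i}$ for $1\le i\le\lfloor n/2\rfloor$. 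A direct check shows $W$ is isotropic of dimension $d_n$: in the odd case these $\lfloor n/2\rfloor$ generators are linearly independent, while in the even case they satisfy the single relation $\sum_i(e_{2i-1}+e_{2i}) = (1,\ldots,1)\in\Delta$. Thus $W$ is Lagrangian, and I let $U$ denote the unipotent radical of the Siegel parabolic $\Stab(W)\subset\Sp(V)$.

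The key verification is that each transposition $t_i := (2i-1,2i)$ lies in $U(\F_2)$: it fixes every generator $e_{2j-1}+e_{2j}$ of $W$ (swapping the two summands when $j=i$, else trivially), and for any standard basis vector $e_k$ of $\F_2^n$ one has $t_i\cdot e_k - e_k \in \{0, e_{2i-1}+e_{2i}\}\subset W$, so $t_i$ acts trivially on $V/W$. Hence the elementary abelian $2$-subgroup $H := \langle t_1,\ldots,t_{\lfloor n/2\rfloor}\rangle$, of rank $\lfloor n/2\rfloor$, is contained in $U(\F_2)\cap S_n$. Since the unipotent radical of a Siegel parabolic is abelian and we are in characteristic $2$, $U(\F_2)$ is itself an elementary abelian $2$-group, so $U(\F_2)\cap S_n$ is an elementary abelian $2$-subgroup of $S_n$ and therefore has rank at most $\lfloor n/2\rfloor$ (the $2$-rank of $S_n$, realized by disjoint transpositions). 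This forces $U(\F_2)\cap S_n = H$, so $\dim_{\F_2}(U(\F_2)\cap S_n) = \lfloor n/2\rfloor$. Since $t_1\in H$ is odd, the sign character restricts to a surjection $H\to\F_2$, giving $\dim_{\F_2}(U(\F_2)\cap A_n) = \dim_{\F_2}(H\cap A_n) = \lfloor n/2\rfloor - 1$.

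Finally, applying Theorem~\ref{thm:essdimsbgpG} with this $U$ yields the lower bounds $\ed(\A_{d_n,2N}/\A_{d_n,G};2)\geq \dim_{\F_2}(U(\F_2)\cap G)$ for $G=S_n$ or $A_n$, and the general upper bound $\ed(\A_{d_n,2N}/\A_{d_n,G};2)\leq \ed(G;2)=r_2(G)$ from Meyer-Reichstein \cite{MR} matches these for $S_n$ (always, giving $\lfloor n/2\rfloor$) and for $A_n$ when $n\equiv 2,3\pmod 4$ (giving $\lfloor n/2\rfloor-1$). The main obstacle in this approach is really just the parity-dependent bookkeeping when verifying that $W$ has dimension $d_n$ and is Lagrangian; with the symplectic geometry of Dickson's representation correctly in hand, the remaining verifications reduce to explicit computations with the permutation action on standard basis vectors.
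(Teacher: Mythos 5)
Your argument is essentially the same as the paper's. Both constructions hinge on the Lagrangian $W$ spanned by (the images of) $e_{2i-1}+e_{2i}$ and the observation that the disjoint transpositions $(2i-1,2i)$ land inside $U(\F_2)$, followed by the maximality of the elementary abelian $2$-rank of $S_n$. Your reformulation of Dickson's pairing as the ordinary dot product restricted to the sum-zero subspace is a cosmetic but equivalent substitute for the form $\sum_{i\neq j}x_iy_j$ the paper cites; indeed $\sum_{i\neq j}v_iw_j = (\sum_i v_i)(\sum_j w_j) - \sum_i v_iw_i = \sum_i v_iw_i$ in characteristic $2$ when $\sum v_i = 0$, so these literally agree there.

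One issue worth flagging, which you come close to noticing but do not fully call out. You observe that the Meyer--Reichstein upper bound $\ed(A_n;2)=r_2(A_n)$ only matches your lower bound $\lfloor n/2\rfloor-1$ when $n\equiv 2,3\pmod 4$, and you leave the case $n\equiv 0,1\pmod 4$ of the $A_n$ equality unaddressed. That is not a defect of your write-up: the claimed equality $\ed(\A_{d_n,2N}/\A_{d_n,A_n};2)=\lfloor n/2\rfloor-1$ for $n\equiv 0,1\pmod 4$ is in fact not established by the paper's proof, and is contradicted by the paper's own later discussion following Proposition~\ref{p:8}, where it is noted that Proposition~\ref{p:di} only provides the lower bound $3$ for $n=8$, while the true value is $\ed(\A_{3,2}/\A_{3,A_8};2)=4=\ed(A_8;2)$. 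So your proof recovers the entirety of what the paper actually proves; the apparent ``missing'' case is a defect in the statement, not in your argument.
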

\begin{proof}
   Let $V$ denote the permutation irrep of $_n$ over $\F_2$, as in \cite{Di}, i.e. 
    \[
    	V=H/\Delta=\{(x_1,\ldots,x_{2\lceil\frac{n}{2}\rceil})\in \F_2^{2\lceil\frac{n}{2}\rceil}~|~\sum_i x_i=0\}/\{(x,\ldots,x)\in \F_2\}
    \]
    A convenient basis for $V$ is given by the cosets in $H$ of
    \begin{align*}
        e_i&:=[(\underbrace{0,\ldots,1,\ldots,0}_{\text{1 in the $ith$ place}},0,1)]
    \end{align*}
    for $i=1,\ldots,2d_n$.  With respect to this basis the action of $S_{2d_n}\subset S_n$ is the standard permutation action of $S_{2d_n}$ on $\F_2^{2d_n}$. Dickson \cite[p. 124]{Di} proved that the $S_n$ action on $\F_2^{2d_n}$ preserves the symplectic form $\sum_{1\le i\neq j\le d_n} x_iy_j$. We now change basis for ease of studying a Lagrangian. Let
    \begin{equation*}
        \begin{matrix}
            \omega_i:=e_{2i-1}+e_{2i}\\
            \\
            \omega_i^\vee=\sum_{j=0}^{2i-1} e_j.
        \end{matrix}
    \end{equation*}
    A straightforward computation shows that the planes $W=\langle \{\omega_i\}_{i=1}^n\rangle$ and $W^\perp=\langle \{\omega_i^\vee\}_{i=1}^n\rangle$ are dual Lagrangians written with dual Lagrangian bases.

    Now fix $W$ and let $P:=\Stab(W)$ be the corresponding Siegel parabolic with unipotent $U$. From the Lagrangian basis for $W$, we see that
    \begin{equation}
    \label{eq:unip4}
        \F_2^{\lfloor \frac{n}{2}\rfloor}=\langle (12),(34),\ldots,(2\lfloor \frac{n}{2}\rfloor -1~2\lfloor\frac{n}{2}\rfloor )\rangle \subset U\cap S_n
   \end{equation}
    But this is a maximal elementary abelian $2$-group in $S_n$, so \eqref{eq:unip4} is an equality.  Thus 
        \begin{align*}
        U\cap A_n=\langle (12)(34),\ldots, (12)(2\lfloor \frac{n}{2}\rfloor -1~2\lfloor\frac{n}{2}\rfloor)\rangle=\F_2^{\lfloor \frac{n}{2}\rfloor -1}
    \end{align*}
    as claimed.
\end{proof}

\subsection{Finite groups of Lie type}

\begin{prop}\label{prop:Liegpbound} Let $q = p^r,$ and $G = H(\F_q),$ where $H$ is one of the semisimple Lie groups 
$\SL_m, \SO_{2m+1},  \Sp_{2m}$ with $m\geq 2$ or  $\SO_{2m},$ with $m \geq 4.$ 
Let $\rho: H \rightarrow \GL(V)$ be the standard representation of $H$ over $\F_q.$
 Then there exists a parabolic $P(W) \subset \GL(V)$ with unipotent radical $U,$ such that $\dim_{\F_q} W = \lfloor \frac {\dim V} 2  \rfloor,$  
 and $r'_p(G) : = \dim_{\F_q} G \cap U$ satisfies:
  \begin{itemize}
    \item If $G = \SL_m(\F_q),$ then $r'_p(G) = \lfloor \frac {m^2} 4   \rfloor.$ 
    \item If $G = \Sp_{2m}(\F_q)$ then $r'_p(G) = \frac {m(m+1)} 2.$ 
    \item If $G = \SO_{2m}(\F_q)$ then $r'_p(G) = \frac {m(m-1)} 2.$ 
    \item If $G = \SO_{2m+1}(\F_q)$ then  $r'_p(G) = \frac {m(m-1)} 2.$ 
   \end{itemize}
  We have $r\cdot r'_p(G) = r_p(G)$ in all cases except if $G = \SO_{2m+1},$ 
  in which case $r_p(G)/r = \frac {m(m+1)} 2$ if $q$ is even 
   and  $r_p(G)/r = \frac {m(m-1)} 2 + 1$ (resp.~ $5$, resp.~$3$) if $q$ is odd and 
  $m\geq 4,$ (resp.~$m=3$, resp.~$m=2$).
\end{prop}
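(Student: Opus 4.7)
The approach is case by case, using the standard representation $V$ of $H$ and choosing $W \subset V$ to be an $\F_q$-subspace of dimension $\lfloor \dim V/2\rfloor$ of the appropriate isotropic type. The unipotent radical $U(W)$ consists of block matrices of the form $\begin{pmatrix} I & A \\ 0 & I \end{pmatrix}$ and has $\F_q$-dimension $\dim W \cdot \dim(V/W)$; the intersection $G \cap U(W)$ is cut out inside $U(W)$ by the linear conditions that the element preserve the defining form of $H$, and one computes it directly.

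For $G = \SL_m(\F_q)$, take $W$ to be any coordinate subspace of dimension $\lfloor m/2 \rfloor$; every element of $U(W)$ has determinant $1$, so $\dim_{\F_q} G \cap U(W) = \lfloor m/2 \rfloor \cdot \lceil m/2 \rceil = \lfloor m^2/4\rfloor$. For $G = \Sp_{2m}(\F_q)$, take $W$ to be a Lagrangian; after identifying $V/W$ with $W^\vee$ via the symplectic form, the condition that $u$ preserve the form becomes symmetry of $A$, giving $\dim_{\F_q} G \cap U(W) = m(m+1)/2$. For $G = \SO_{2m}(\F_q)$ split, take $W$ to be a maximal isotropic of dimension $m$; the orthogonality condition becomes the condition that $A$ be alternating, yielding $m(m-1)/2$.

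The case $G = \SO_{2m+1}(\F_q)$ requires a little more care since $\dim V$ is odd. Take $W$ to be a maximal isotropic of dimension $m$, and decompose $V = W \oplus L \oplus W'$ with $L$ a nondegenerate line and $W \oplus W'$ hyperbolic. An element of $U(W)$ is then $I + \pi$ where $\pi(e_0) = a \in W$ for a chosen generator $e_0$ of $L$, and $\pi|_{W'} = c \colon W' \to W$. Writing out $B(v, \pi(v)) = 0$ for $v = \alpha e_0 + \sum \beta_j f_j$ and separating the $\alpha\beta_j$-coefficients from the pure quadratic terms in the $\beta_j$ forces $a = 0$ and $c$ skew-symmetric, so $\dim_{\F_q} G \cap U(W) = m(m-1)/2$.

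The comparison with $r_p(G)$ is standard in the first three cases but genuinely exceptional for odd orthogonal groups, and this is the main obstacle. Since $G \cap U(W)$ is an $\F_q$-vector space of $\F_q$-dimension $r'_p(G)$, it is an elementary abelian $p$-subgroup of $G$ of $\F_p$-rank $r \cdot r'_p(G)$. For $\SL, \Sp$, and $\SO_{2m}$ the unipotent radical of the Siegel parabolic of the ambient group is itself abelian and of the same $\F_q$-dimension as $G \cap U(W)$; by Borel-Tits every elementary abelian $p$-subgroup lies in a conjugate of a such a unipotent radical, so $r_p(G) = r \cdot r'_p(G)$. For $G = \SO_{2m+1}$ the situation bifurcates: in characteristic $2$ the radical of the bilinear form induces an isomorphism $\SO_{2m+1}(\F_q) \cong \Sp_{2m}(\F_q)$, which forces $r_p(G)/r = m(m+1)/2$; in odd characteristic the unipotent radical of the Siegel parabolic of $\SO_{2m+1}$ is non-abelian, and a direct classification of maximal abelian subgroups of this unipotent radical in type $B_m$ (via the corresponding Lie-algebra computation of abelian ideals in the nilradical) produces the values $m(m-1)/2 + 1$ for $m \geq 4$ and the small exceptional values $5$ and $3$ for $m = 3$ and $m = 2$. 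The delicate part is this last computation: one must check that our subgroup $G\cap U(W)$ is, in the odd orthogonal case, not quite optimal inside $G$ but only by the stated small defect.
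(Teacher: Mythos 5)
Your computation of $r'_p(G)=\dim_{\F_q}G\cap U(W)$ in each of the four cases is correct and is in substance the same as the paper's: the paper phrases it via weights and root spaces (specifying $W$ as a span of weight vectors and $U_G$ as a sum of root spaces, with a short Lie-algebra argument in type $B_m$ to get the reverse inclusion $H\cap U\subset U_G$), while you cut out $G\cap U(W)$ directly by the determinant/symmetry/alternating conditions and, for $\SO_{2m+1}$, by the explicit $a=0$ computation. Either route is fine for that half of the statement.

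The gap is in the comparison with $r_p(G)$. Your claim that ``by Borel--Tits every elementary abelian $p$-subgroup lies in a conjugate of such a unipotent radical'' (namely, the radical of a parabolic with \emph{abelian} unipotent radical) is not what Borel--Tits gives: it places a unipotent subgroup inside the unipotent radical of \emph{some} parabolic, with no control on which one, and in particular no guarantee that the parabolic has abelian radical or that the elementary abelian subgroup embeds in a radical of the specific shape $U(W)\cap G$. The case $G=\SO_{2m+1}(\F_q)$, $q$ odd, is exactly the counterexample to the principle you are invoking: there $r_p(G)/r$ \emph{exceeds} $\dim_{\F_q}G\cap U(W)$, so maximal elementary abelian subgroups do not sit where your argument would put them. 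Your proposed repair for that case --- classifying abelian ideals of the nilradical of the Siegel parabolic in type $B_m$ --- also cannot produce the stated answers: for $m=3$ and $m=2$ the true values are $5$ and $3$, which are strictly larger than $\frac{m(m-1)}2+1$ ($=4$, resp.\ $2$), so the extremal subgroups in those cases are not of the form your Lie-algebra computation would find. What is actually needed, and what the paper uses, is the explicit determination of the maximal elementary abelian $p$-subgroups of the finite Chevalley groups due to Barry \cite{Barry}: from that one reads off $r_p(G)$ in all cases, and for $\SL_m,\Sp_{2m},\SO_{2m}$ one sees that the subgroup $G\cap U(W)$ already realizes the maximum, giving $r\cdot r'_p(G)=r_p(G)$. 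Without that input (or an equivalent upper bound on the rank of elementary abelian $p$-subgroups), your equalities $r\cdot r'_p(G)=r_p(G)$ and the exceptional odd-orthogonal values remain unproved.
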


\begin{proof} We use the standard representations of the root systems of each of the groups $H.$ 
In each case, we will recall the weights appearing in $V,$ specify the subspace $W \subset V,$ and describe a subgroup $U_G \subset H$ as a sum of root spaces. In each case if $r$ is a root appearing in $U_G$ and $w,w'$ are weights appearing in $W$ and $V/W$ respectively, 
then $r+w$ does not appear in $V,$ and $r+w'$ does not appear in $V/W.$ This implies that $U_G \subset H \cap U.$

If $G = \SL_m(\F_q),$ then the weights of $V$ are $e_1, \dots, e_m,$ and $W = \langle e_1, \dots, e_{\lfloor \frac m 2 \rfloor}\rangle.$ 
The roots appearing in $U_G$ are $e_i - e_j$ with $ i \leq \lfloor \frac m 2 \rfloor < j.$

If $G = \Sp_{2m}(\F_q),$ then the weights of $V$ are $\pm e_1, \dots, \pm e_m,$ and $W = \langle e_1, \dots, e_m \rangle.$ 
The roots appearing in $U_G$ are $e_i+e_j$ and $2e_i$ for $1 \leq i < j \leq m.$

If $G = \SO_{2m}(\F_q),$ then the weights of $V$ are $\pm e_1, \dots, \pm e_m,$ and $W = \langle e_1, \dots, e_m \rangle.$ 
The roots appearing in $U_G$ are $e_i+e_j$ for $1 \leq i < j \leq m.$

If $G = \SO_{2m+1}(\F_q),$ then the weights of $V$ are $\pm e_1, \dots, \pm e_m, 0$ and $W = \langle e_1, \dots, e_m \rangle.$ 
The roots appearing in $U_G$ are $e_i+e_j$ for $1 \leq i < j \leq m.$

The maximal elementary abelian $p$-subgroups of $H(\F_q)$ for each group $H$ appearing above are computed in 
\cite{Barry}. In particular, for $G$ equal to one of $\SL_m(F_q), \Sp_{2m}(\F_q), \SO_{2m}(\F_q),$ one sees that $U_G$ 
is already a maximal elementary abelian $p$-subgroup, so that $U_G = H \cap U$ and $r\cdot r'_p(G) = r_p(G).$ 
For $G = \SO_{2m+1}(\F_q)$ the claims about $r_p(G)$ also follows from {\em loc.~cit}, and it remains only to 
prove that $U_G = H \cap U$ in this case. 

To see this, consider $v = \sum_r a_r r \in \Lie (H \cap U)$ where $r$ is a positive root of $H$  and $a_r$ is a scalar. 
Now $V$ is a cyclic highest weight module for $\Lie H.$ Using this and that $v$ annihilates $e_j \in W,$ one gets $a_r = 0$ if 
$r = e_i - e_j.$ Similarly, since $v$ annihilates $-e_j \in V/W,$ $a_r = 0$ for $r = e_j.$ Thus $v \in U_G.$
\end{proof} 

\begin{remark} Note that when $q$ is even, one has $ \SO_{2m+1}(\F_q) \simeq \Sp_{2m}(\F_q),$ 
so that $s_p(G) = r_p(G)$ in this case.
\end{remark}

\section{Classical Problems and Congruence Covers}\label{s:equiv}
Beginning with the work of Hermite on the quintic \cite{Hermite}, the use of modular functions to solve algebraic equations is a major theme of 19th century work, including Klein's icosahedral solution of the quintic \cite{KleinIcos}, the Klein-Burkhardt formula for the 27 lines on a cubic surface \cites{KleinLetter,Burkhardt1,Burkhardt2,Burkhardt3}, the Klein-Gordan solution of equations with Galois group the simple group $\PSL(2,7)$ \cites{Klein8,Gordan}, and the Klein-Fricke solution of the sextic \cites{KleinLast,Fricke}. Underlying this work is the fact that problems of algebraic functions are often {\em equivalent} to problems of modular functions and congruence covers.  

Our goal in this section is to record the classical equivalences, and add to them using recent advances in uniformization. We begin by axiomatizing the notion of accessory irrationality, and recalling the general context in which to take up Klein's call to ``fathom the nature and significance of the necessary accessory irrationalities'' \cite[p. 174]{KleinIcos}. We then recall the general setup of congruence covers of locally symmetric varieties in order to state the precise equivalences. 

While many of the results of this section are implicit in the classical literature, as far as we can tell, with the exception of Klein's {\em Normalformsatz} \cite{KleinIcos}, that various classical problems are in fact {\em equivalent} has gone unremarked in the literature until quite recently \cite{FW}.

\subsection{Accessory Irrationalities and $\E$-Versality} For the rest of the paper we fix an algebraically closed field $K$ of characteristic $0.$ 

\begin{para} By a {\em branched cover} $Y \rightarrow X,$ we mean a dominant, finite map of normal $K$-schemes of finite type. 
Branched covers form a category: a map $(Y' \rightarrow X') \rightarrow (Y \rightarrow X)$ is a commutative diagram
$$\xymatrix{ Y' \ar[r]\ar[d] & Y\ar[d] \\ 
X' \ar[r] & X.
}$$

If $f: X' \rightarrow X$ is a map of normal $K$-schemes of finite type, denote by $f^*Y$ the normalization of $Y\times_XX'.$
If $X$ is connected then $Y \rightarrow X$ corresponds to a finite set $S_Y$ with an action of $\pi_1(U)$ for some dense open $U \subset X,$ where $\pi_1(U)$ denotes the \'etale fundamental group of $U.$ 
We denote by $\Mon(Y/X)$ the image of $\pi_1(U)$ in $\Aut(S_Y).$
\end{para}

\begin{para}
We now introduce the notion of a class of {\em accessory irrationalities} (cf.~Klein \cites{KleinIcos,KleinNU}, see also Chebotarev \cite{ChICM}). 

\begin{defn}[{\bf Accesory irrationalities}] 
\label{definition:accesory} 
A {\em class of accessory irrationalities} is a full subcategory $\E$ of the category of branched covers. 
If $\E(X) \subset \E$ denotes the subcategory consisting of branched covers $\tilde X \rightarrow X,$ then 
we require that $\E(X)$ is stable under isomorphisms, and satisfies the following conditions.
\begin{enumerate}
\item For any $X,$ the identity $X\rightarrow X$ is in $\E(X).$
\item For any map $f:X' \rightarrow X$ of normal $K$-schemes of finite type, $f^*$ induces a functor
$f^*:\E(X) \rightarrow \E(X').$
\item $\E(X\coprod X') = \E(X) \times \E(X').$
\item $\E(X)$ is closed under products: If $E,E' \in \E(X),$ then $E\times_XE' \in \E(X).$
\item If $U \subset X$ is dense open, then the map $\E(X) \rightarrow \E(U)$ induced by restriction is an equivalence of categories. 
\item If $E \rightarrow X' \rightarrow X$ are branched covers and if $E \rightarrow X$ is in $\E(X)$ then $E \rightarrow X'$ is in $\E(X').$
\end{enumerate}
\end{defn}

Axiom (2) implies that $\E$ is a category fibered over the category of normal $K$-schemes. 
Note that Axiom (3) implies that it is enough to specify $\E(X)$ for $X$ connected.

\begin{defn}
Fix a class $\E$ of accessory irrationalities.  The {\em essential dimension} of a cover $\tilde X \rightarrow X,$ with respect to $\E$ is: 
\begin{align*}
	\ed(\tilde{X}/X;\E):=\min_{(E\to X)\in \E}\ed(E\times_X \tilde{X}/E).
\end{align*}
\end{defn}
\end{para}

\begin{example}\label{ex:AI}
	Some of the core classical examples of $\E$ are as follows (for simplicity we specify $\E(X)$ only for $X$ connected):
\begin{enumerate}
	\item For $\E(X)=\{\id:X\to X\}$, the quantity $\ed(\tilde{X}/X;\E)$ is just the essential dimension $\ed(\tilde{X}/X)$. 
	\item Let $p$ be a prime and let $\E(X)$ be the subcategory of branched covers of $X$ whose degree is coprime to $p.$  	
	Then $\ed(\tilde{X}/X;\E)$  is the {\em essential dimension at $p$}. We emphasize that, although it leads to the same notion of essential dimension at $p,$ we do not insist that $E$ is connected, as this version of the definition does not satisfy Axiom (3) of Definition~\ref{definition:accesory}.
	\item Let $\E(X)$ be the set of covers $E\rightarrow X$ with $\Mon(E/X)$ abelian. Then $\ed(\tilde{X}/X;\E)$ is the {\em abelian resolvent degree}. Likewise, we can consider the class of accessory irrationalities with nilpotent (resp. solvable) monodromy, to obtain the {\em nilpotent} (resp. {\em solvable}) resolvent degree (see \cites{KleinNU,ChICM,Ch43}).
	\item Let $G$ be a finite simple group, and let $\E(X)$ consist of all $E\to X$ such that for each connected component $E'$ of $E,$  the branched cover $E' \rightarrow X$ is Galois and a composition series for $\Gal(E'/X)$ has no factor isomorphic to $G.$ We write 
	$\ed(\tilde X/X; G)$ for $\ed(\tilde X/X; \E).$
      \end{enumerate}
\end{example}
    
\begin{defn}[{\bf $\E$-versality}]
\label{d:evers}
		Let $\E$ be a class of accessory irrationalities. A Galois branched cover $\tilde{X}\to X$ with group $G$ is {\em $\E$-versal} if for any other Galois $G$-cover $\tilde{Y}\to Y$, and any Zariski open $U\subset X$, there exists 
		\begin{enumerate}
			\item an accessory irrationality $E\to Y$ in $\E(Y)$, 
			\item a nontrivial rational map $f\colon E\to U$, and 
			\item an isomorphism $f^*\tilde{X}|_U\cong \tilde{Y}|_E$.
		\end{enumerate}
\end{defn}

\begin{remark} If $\E$ is the trivial class of accessory irrationalities, i.e. $\E(X)$ only contains the identity, then $\E$-versal is just ``versal'' in the usual sense of the term (see e.g. \cite[Section 1.5]{GMS}).

If $\E' \subset \E$ are classes of accessory irrationalities, then $\E'$-versality for a $G$-cover implies $\E$-versality. 
In particular a cover which is versal is $\E$-versal for any class $\E.$
\end{remark}

\begin{example}\mbox{}\label{example:H90}
		\begin{enumerate}
			\item Hilbert's Theorem 90 implies that for a finite group $G,$ and every faithful linear action $G\circlearrowleft \AA^n$, the map $\AA^n\to \AA^n/G$ is versal (see \cite{DR1}).
			\item The Merkujev-Suslin Theorem \cite[Theorem 16.1]{MS} implies that for every faithful, projective-linear action $G\circlearrowleft\Pb^n$, the map $\Pb^n\to\Pb^n/G$ is solvably versal, i.e. $\E$-versal for the class $\E$ of solvable branched covers.\footnote{{\em Mutatis mutandis}, this follows by the same reasoning as in \cite{DR1}.}
		\end{enumerate}
\end{example}

\begin{lemma}\label{l:evers}
	Let $G$ be a finite group, let $\E$ be a class of accessory irrationalities, and let $\tilde{X}\to X$ be an $\E$-versal $G$-cover. 
	\begin{enumerate}
		\item Let $\tilde{X}\to \tilde{Z}$ be a $G$-equivariant dominant rational map. Then $\tilde{Z}\to \tilde{Z}/G$ is an $\E$-versal $G$-cover.
		\item Let $H\subset G$ be any subgroup. Then $\tilde{X}\to \tilde{X}/H$ is an $\E$-versal $H$-cover. 
	\end{enumerate}
\end{lemma}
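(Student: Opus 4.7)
Since $\tilde X \dashrightarrow \tilde Z$ is $G$-equivariant, dominant and rational, and $G$ acts faithfully on $\tilde X$, the induced rational map $X \dashrightarrow \tilde Z/G$ is dominant, and $K(\tilde Z)/K(\tilde Z/G)$ embeds as a $G$-stable subextension of $K(\tilde X)/K(X)$. Galois theory then gives $K(\tilde X) = K(X) \cdot K(\tilde Z)$, so $\tilde X$ is birational to $X \times_{\tilde Z/G} \tilde Z$. Now let $\tilde Y \to Y$ be any Galois $G$-cover and $V \subset \tilde Z/G$ open; pick an open $U \subset X$ whose image in $\tilde Z/G$ lies in $V$. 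By $\E$-versality of $\tilde X \to X$ applied to this $U$, there exist $E \to Y$ in $\E(Y)$, a rational $f: E \to U$, and a $G$-equivariant iso $f^*\tilde X|_U \cong \tilde Y|_E$. Composing $f$ with $U \to V$ yields $g: E \to V$, and the birational identification $\tilde X \cong X \times_{\tilde Z/G} \tilde Z$ converts the iso into $g^*\tilde Z|_V \cong \tilde Y|_E$, proving that $\tilde Z \to \tilde Z/G$ is $\E$-versal.

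\textbf{Plan for part (2).} Given a Galois $H$-cover $\tilde Y \to Y$, I form the induced Galois $G$-cover $\tilde Y_G := \tilde Y \times^H G \to Y$ (a branched cover in the sense of the paper, possibly with several connected components), which contains $\tilde Y$ as an $H$-equivariant subcover via $y \mapsto [y, 1]$. Let $V \subset \tilde X/H$ be open and choose an open $U \subset X$ over which $\tilde X \to X$ is \'etale and such that $U$ lies in the image of $V$ under the finite map $\tilde X/H \to X$. Applying $\E$-versality of $\tilde X \to X$ to $\tilde Y_G$ and $U$, one obtains $E \to Y$ in $\E(Y)$, a rational $f': E \to U$, and a $G$-equivariant iso $\varphi: (f')^*\tilde X|_U \cong \tilde Y_G|_E$. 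Restricting $\varphi^{-1}$ to the $H$-subcover $\tilde Y|_E \subset \tilde Y_G|_E$ produces an $H$-equivariant embedding $\tilde Y|_E \hookrightarrow \tilde X|_U \times_U E$; quotienting by $H$ on the first factor yields a section $E \hookrightarrow (\tilde X/H)|_U \times_U E$, and composing with projection gives the desired map $f: E \to (\tilde X/H)|_U$. Unwinding the construction (the preimage of this section under the $H$-quotient $\tilde X|_U \times_U E \to (\tilde X/H)|_U \times_U E$ is precisely $\tilde Y|_E$) gives $f^*\tilde X \cong \tilde Y|_E$ as $H$-covers.

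\textbf{Main obstacle.} The delicate step is in (2): arranging for the image of $f: E \to \tilde X/H$ to lie in the prescribed open $V$, since a priori the canonical $H$-orbit selected by $\varphi^{-1}$ might correspond to a point in some other coset of $\tilde X/H \to X$. This is handled by exploiting the freedom in the choice of $\varphi$: any two $G$-equivariant trivializations differ by an automorphism of the $G$-torsor $\tilde Y_G|_E$, and such automorphisms act transitively on the $|G/H|$ possible $H$-orbits in each fiber. Hence after a suitable $G$-translation of $\varphi$, the canonical $H$-orbit maps into $V$ at some point of $E$; restricting $E$ to the preimage of $V$ (a nonempty open) preserves $\E(Y)$-membership by Axiom (5) of Definition~\ref{definition:accesory}, and the rest is routine.
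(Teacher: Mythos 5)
Your part (1) is correct and in fact spells out usefully what the paper merely calls ``immediate'': you identify $\tilde X$ birationally with $X\times_{\tilde Z/G}\tilde Z$ (using faithfulness of $G$ on $\tilde Z$, which is implicit in the statement), pull $U\subset X$ back from $V\subset\tilde Z/G$, and push the trivializing isomorphism along the fiber-product identification. This is a fine way to make the argument explicit.

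For part (2), your skeleton is the same as the paper's (induce $\tilde Y$ to $\tilde Y\times_H G$, invoke $\E$-versality of $\tilde X\to X$, extract the $H$-factorization $E\to(\tilde X/H)|_U$), but your treatment of the ``main obstacle'' contains a genuine error. You claim that $G$-cover automorphisms of $\tilde Y_G|_E$ act transitively on the $[G:H]$ fiberwise $H$-orbits. This is false: if the monodromy of $\tilde Y|_E\to E$ is onto $H$, the group of $G$-cover automorphisms of $\tilde Y_G|_E$ over $E$ is $C_G(H)$ acting by left translation on $G/H$, and $C_G(H)$ rarely acts transitively on $G/H$. For instance with $G=S_3$ and $H=\langle(12)\rangle$ one has $C_G(H)=H$, whose left action on the three cosets of $H$ has orbits of sizes $1$ and $2$. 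So post-composing $\varphi$ with a $G$-equivariant automorphism cannot, in general, move the distinguished $H$-orbit to any prescribed one. A second difficulty is that your last sentence restricts $E$ to $f^{-1}(V)$, an open of $E$ rather than of $Y$; Axiom (5) only gives you freedom to shrink $Y$, and an open subscheme of $E$ need not be a branched cover of $Y$ at all, so the $\E(Y)$-membership of the restricted object is not justified.

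The simpler and correct resolution, which is what the paper's wording ``for any Zariski open $U\subset X$'' is implicitly exploiting, is to choose $U$ \emph{before} applying versality so that the issue never arises. Given $V\subset\tilde X/H$ open, let $Z=(\tilde X/H)\setminus V$; its preimage in $\tilde X$ is a proper closed $H$-invariant subset, whose image $W\subset X$ under the finite quotient $\tilde X\to X$ is a proper closed subset. Taking $U=X\setminus W$, one checks directly that $\pi^{-1}(U)\subset V$ where $\pi\colon\tilde X/H\to X$. Apply $\E$-versality of $\tilde X\to X$ (to $\tilde Y\times_H G$ and this $U$) and factor as in the paper: the resulting $\tilde f\colon E\to(\tilde X/H)|_U$ lands in $V$ automatically, with no automorphism twist and no shrinking of $E$ required.
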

\begin{proof}
		The first statement follows immediately from the definition. For the second, let $\tilde{Y}\to Y$ be a Galois $H$-cover. Then 
		\begin{equation*}
			\tilde{Y}\times_H G\to Y
		\end{equation*}
		is a Galois $G$-cover which is $H$-equivariantly isomorphic to $\tilde{Y}\times G/H\to Y$. By $\E$-versality, for any Zariski open $U\subset X$, there exists an accessory irrationality
		\[
			E\to Y
		\]
		in $\E$, and a rational map
		\[
			f\colon E\to U
		\]
		with an isomorphism of $G$-covers 
		\[
			f^\ast \tilde{X}\cong (\tilde{Y}\times_H G)|_E.
		\]
		By the Galois correspondence for covers, the $H$-equviariant isomorphism above implies that $E\to U$ factors through a map 
		\[
			\tilde{f}\colon E \to (\tilde{X}/H)|_U
		\]
		We conclude that $\tilde{f}^\ast\tilde{X}\cong \tilde{Y}|_E$ and that $\tilde{X}\to\tilde{X}/H$ is $\E$-versal for $H$ as claimed.			
              \end{proof}
          
          \begin{remark}
          \label{remark:M0n:versal}
          Example~\ref{example:H90}(1) and Lemma~\ref{l:evers}(1) immediately imply that for 
          each $ n\geq 4$, the cover $\M_{0,n}\to \M_{0,n}/S_n$ is versal for the group $S_n$.
          \end{remark}

              \begin{para} We can also consider the {\em resolvent degree} of a cover $\tilde{X}\to X$, which is somewhat different from, but related to the idea of the general notion of essential dimension defined above. To explain this, write 
              $E_{\bullet} \rightarrow X$ for a tower of branched covers $E=E_r\to\cdots\to E_0=X.$ 
              The {\em resolvent degree} of $\tilde{X}\to X$ is defined as 
              
		\[
		\RD(\tilde{X}/X)=\min_{E_{\bullet} \rightarrow X}\max\left\{\ed(E\times_X \tilde{X}/E), \left\{\ed(E_i/E_{i-1})\right\}_{i=1}^r\right\}
              \]
where $E_{\bullet} \rightarrow X$ runs over all sequences of covers.

When $\Mon(\tilde{X}/X)$ is simple, it follows from \cite[Cor. 2.18]{FW} that the definition of $\RD(\tilde{X}/X)$ does not change if we consider only $E_{\bullet} \rightarrow X$ such that the composition 
$\pi_1(E)\to \pi_1(X)\to\Mon(\tilde{X}/X)$ is surjective and $\ed(E_i/E_{i-1})<\dim(X)$. 
In particular
\begin{equation}\label{eqn:comparerd}
  \min_{E_{\bullet}\rightarrow X} \ed(E\times_X \tilde{X}/E)\le \RD(\tilde X/X)
  \end{equation}
  where $E_{\bullet}\rightarrow X$ runs over sequences of covers satisfying these conditions.
  On the other hand, in every known example, the current best upper bound for $\RD(-)$ can be exhibited using such a sequence $E_{\bullet}\rightarrow X$ which in addition satisfies $\ed(E\times_X \tilde{X}/E) \geq \ed(E_{i+1}/E_i),$ for $i=1,\dots, r.$

Hilbert  \cites{Hi1,Hi2} made three conjectures on the resolvent degree of the general degree $n$ polynomial; equivalently on 

\[\RD(n):=\RD(\M_{0,n}/(\M_{0,n}/S_n)) = \RD(\M_{0,n}/(\M_{0,n}/A_n)).\]

\begin{iconj}[{\bf Hilbert}]
\label{conjecture:hilbert}
The following equalities hold:

\[
\begin{array}{rl}
\mbox{\it Sextic Conjecture: }& \RD(6)=2.\\
\mbox{\it 13th Problem: }&\RD(7)=3.\\
\mbox{Octic Conjecture: }&\RD(8)=\RD(9)=4.
\end{array}
\]
\end{iconj}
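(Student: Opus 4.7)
The statement comprises three open conjectures of Hilbert on $\RD(n)$ for $n=6,7,8,9$. Each asserts an equality $\RD(n) = d$, which decomposes into an upper bound $\RD(n) \le d$ and a lower bound $\RD(n) \ge d$; the former is classical, while the latter is the main obstacle.

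\emph{Upper bounds.} I would invoke the classical Tschirnhaus-type reductions developed by Bring, Hamilton, Joubert, Sylvester, and Hilbert: $\RD(6)\le 2$ via the Joubert normal form reducing the general sextic to a two-parameter family; $\RD(7)\le 3$ via the Hamilton--Sylvester reduction of the septic; $\RD(8)\le 4$ via the iterated Bring--Jerrard reduction; and $\RD(9)\le 4$ via Hilbert's nonic reduction (a nontrivial improvement over the naive bound of $5$). Each exhibits an explicit resolving tower $E_\bullet \to \M_{0,n}/S_n$ of the required width, since all accessory irrationalities invoked have essential dimension bounded by the claimed value of $\RD$.

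\emph{Lower bounds.} My plan is to apply the $\E$-versality framework of \S\ref{s:equiv} to transfer the question from $\M_{0,n}\to \M_{0,n}/S_n$ to a concrete modular cover. By Remark~\ref{remark:M0n:versal} and Lemma~\ref{l:evers}, the universal $S_n$-cover is versal, so if a congruence $S_n$-cover $\tilde X\to X$ can be shown to be $\E$-versal for a sufficiently mild class $\E$, then $\RD(\M_{0,n}/(\M_{0,n}/S_n))$ equals $\RD(\tilde X/X)$ up to the resolvent degree of $\E$. The natural candidate supplied by Theorem~\ref{it:main} is the cover $\A_{d_n, 2N}\to \A_{d_n, S_n}$ coming from Dickson's mod-$2$ symplectic embedding $S_n\subset \Sp_{2d_n}(\F_2)$. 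I would then attempt to prove lower bounds on the resolvent degree of this modular cover by enhancing the Serre--Tate deformation argument of \S\ref{sec:moduli} into a tower-stable invariant $I$ satisfying
\[
  I(\tilde X/X) \;\le\; \max\bigl\{I(E_i/E_{i-1}),\; I(E\times_X \tilde X/E)\bigr\}
\]
over all resolving towers $E_\bullet \to X$.

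\emph{The main obstacle.} This last step is the substantive one and is in fact the heart of Hilbert's 13th Problem. The Serre--Tate method of \S\ref{sec:moduli} yields $\ed(\A_{d_n,2N}/\A_{d_n,S_n};2) = \lfloor n/2\rfloor$, but since $\RD \le \ed$, this provides no lower bound on $\RD$. At present, no technique produces lower bounds on $\RD$ beyond the trivial $\RD \ge 1$ for any of the covers in question. The decisive missing ingredient is an arithmetic invariant sensitive to tower decompositions: something that obstructs resolution by \emph{compositions} of low-dimensional covers, not merely by a single low-dimensional cover. The $\E$-versality reformulation narrows the problem to an explicit family of congruence covers whose resolvent degree one must bound below, but constructing such a tower-stable obstruction is the open problem that has kept these conjectures unresolved for over a century.
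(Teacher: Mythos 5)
The statement you were asked to prove is not a theorem of the paper but a labeled conjecture --- Hilbert's Sextic Conjecture, 13th Problem, and Octic Conjecture --- and the paper offers no proof of it; it only records that the upper bounds are classical (the first two due to Hamilton, the last to Hilbert) and then uses the $\E$-versality machinery of \S\ref{s:equiv} to show the conjectures are \emph{equivalent} to statements about the resolvent degree of specific congruence covers (Propositions~\ref{p:sext} and \ref{p:8}). Your proposal is therefore correct in substance: you rightly identify the upper bounds as known, rightly observe that the paper's Serre--Tate lower bounds concern $\ed(\cdot\,;2)$ and cannot bound $\RD$ from below (since $\RD \leq \ed$ and the essential-dimension obstruction is not stable under towers), and rightly conclude that the lower bounds remain open. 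There is no proof in the paper to compare yours against, and you have accurately described both what is known and what is missing.
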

 
The upper bounds in Conjecture~\ref{conjecture:hilbert} are known; the first two are due to Hamilton, the last to Hilbert.

Our interest in $\E$-versality comes from the following lemma, which is proven {\em mutatis mutandis} by the same argument as in the proof of  \cite[Proposition 3.7]{FW}.
\end{para}

\begin{lemma}\label{l:versmax}
	Let $\E$ be a class of accessory irrationalities and let $\tilde{X}\to X$ be an $\E$-versal $G$-cover.  For any Galois branched cover  $\tilde{Y}\to Y$ with monodromy $G$, 

	\[
		\ed(\tilde{Y}/Y;\E)\le \ed(\tilde{X}/X;\E).
	\]
	
	In particular, for any other $\E$-versal $G$-cover $\tilde{X}'\to X'$, 
	
	\[
		\ed(\tilde{X}'/X';\E)=\ed(\tilde{X}/X;\E).
	\]
	
	Further, if $\E$ is any of the classes of Example~\ref{ex:AI} and if $G$ is simple, then 
	
	\[
		\RD(\tilde{Y}/Y)\le\RD(\tilde{X}/X) \ \ \mbox{and}\ \ \RD(\tilde{X}'/X')=\RD(\tilde{X}/X).
	\]
	\end{lemma}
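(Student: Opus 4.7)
The plan is to pull back a minimizing accessory irrationality for $\tilde X/X$ to $Y$ via the $\E$-versal structure and exploit the monotonicity of $\ed$ under pullback. Set $d:=\ed(\tilde X/X;\E)$ and fix $\pi\colon E\to X$ in $\E(X)$ achieving this minimum, so $\ed(E\times_X\tilde X/E)=d$. Choose a dense open $U\subset X$ over which $\pi$ is \'etale and apply $\E$-versality of $\tilde X\to X$ to $\tilde Y\to Y$: this yields $F\to Y$ in $\E(Y)$, a dominant rational map $f\colon F\dashrightarrow U$, and an isomorphism $f^*(\tilde X|_U)\cong \tilde Y|_F$. Form the normalized fiber product $F':=F\times_X E$, viewed as a cover of $Y$ via $F'\to F\to Y$; by axiom (2) of Definition~\ref{definition:accesory} together with the (routinely verified) closure of $\E$ under composition in each class of Example~\ref{ex:AI}, $F'\to Y$ lies in $\E(Y)$.

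A direct fiber-product calculation using the versality isomorphism gives
\[
F'\times_Y\tilde Y\ \cong\ F'\times_F f^*\tilde X\ \cong\ f^*(E\times_X\tilde X)
\]
as covers of $F'=f^*E$. Since pullback does not increase essential dimension, $\ed(F'\times_Y\tilde Y/F')\le \ed(E\times_X\tilde X/E)=d$, whence $\ed(\tilde Y/Y;\E)\le d$. The equality for two $\E$-versal $G$-covers follows by applying this inequality in both directions.

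For the resolvent-degree claim, assume $G$ is simple and $\E$ is one of the classes of Example~\ref{ex:AI}. Using \cite[Cor.~2.18]{FW} and~(\ref{eqn:comparerd}), fix a tower $E_\bullet\to X$ realizing $\RD(\tilde X/X)$ with every intermediate $\ed(E_i/E_{i-1})$ and the top $\ed(E_r\times_X\tilde X/E_r)$ at most $\RD(\tilde X/X)$. Pulling this tower back along $f\colon F\dashrightarrow X$ preserves these bounds by monotonicity, and the top step inherits its bound via the same isomorphism $(f^*E_r)\times_Y\tilde Y\cong f^*(E_r\times_X\tilde X)$. Prepending $F\to Y$ --- trivially for the identity-class case, and by decomposition into a tower in $\E$ of steps each of essential dimension at most $1$ in the remaining cases (e.g., via Kummer theory for the abelian class) --- yields a tower over $Y$ witnessing $\RD(\tilde Y/Y)\le\RD(\tilde X/X)$, and the equality of $\RD$ for two $\E$-versal $G$-covers then follows by symmetry. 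The main technical point will be this prepending step: verifying for each class of Example~\ref{ex:AI} that $F\to Y$ admits such a decomposition so that no step of the resulting tower exceeds $\RD(\tilde X/X)$; this is where the simplicity of $G$ and the explicit structure of each class are essential.
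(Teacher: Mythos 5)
Your overall strategy --- pull the minimizing accessory irrationality $E\to X$ back along the versality map $f$ and adjoin it to the versality irrationality $F\to Y$ --- is surely the intended adaptation of \cite[Prop.~3.7]{FW} (the paper itself only cites that proof ``mutatis mutandis''), and your identification $F'\times_Y\tilde Y\cong f^*(E\times_X\tilde X)$ is correct. The gap is the claim that $F'=f^*E\to F\to Y$ lies in $\E(Y)$. Axioms (1)--(6) of Definition~\ref{definition:accesory} give pullbacks, products over a fixed base, and the \emph{converse} of composition (axiom (6)), but not closure under composition of towers; and your fallback --- that composition-closure is ``routinely verified'' for the classes of Example~\ref{ex:AI} --- is false for the abelian and nilpotent classes. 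Concretely, if $W\to Y$ is Galois with group $S_3$, then $F:=W/(\Z/3\Z)\to Y$ is a quadratic (abelian) cover and $W\to F$ is a cyclic cubic cover, yet $W\to Y$ has monodromy $S_3$ and is not even \emph{dominated} by any abelian cover of $Y$ (any $E''\to Y$ factoring through $W$ has $S_3$ as a quotient of its monodromy). Since the first inequality of the lemma is asserted for an \emph{arbitrary} class $\E$, checking the listed examples would not suffice in any case: one must either add composition-closure (up to domination by a member of $\E(Y)$) as a hypothesis on $\E$, or restrict to classes of the kind actually invoked elsewhere in the paper, which are deliberately phrased as containing ``all quadratic covers \emph{and composites thereof},'' etc., precisely so that the composite $F'\to Y$ belongs to $\E(Y)$ by hypothesis.

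The resolvent-degree part is likewise not complete. You correctly reduce it to ``prepending'' $F\to Y$ to the pulled-back tower and you defer that step; but the route you propose (decompose $F\to Y$ into steps of essential dimension at most $1$) cannot work for class (2) (prime-to-$p$ degree) or class (4) (covers avoiding $G$ as a composition factor): members of those classes can have arbitrarily large monodromy --- a degree-$m$ cover with monodromy $S_m$ for large odd $m$ is prime-to-$2$, and a Galois $A_m$-cover with $m\neq 5$ avoids $G=A_5$ as a composition factor --- hence arbitrarily large essential dimension, with no decomposition into low-$\ed$ steps available. What is needed here is the simplicity of $G$ together with \cite[Cor.~2.18]{FW} and \eqref{eqn:comparerd}, which allow one to insert accessory irrationalities along which $\pi_1\to G$ remains surjective without affecting $\RD$; your sketch does not engage with this. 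In short: right skeleton, but the two load-bearing steps (membership of the composite in $\E(Y)$, and the prepending step for $\RD$) are exactly the points left unjustified, and the justifications you sketch for them fail for several of the classes in Example~\ref{ex:AI}.
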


Lemma~\ref{l:versmax} makes precise the classical discovery that $\E$-versal $G$-covers provide ``normal forms''  to which every other $G$-cover or can be reduced.  Notably, for many groups $G$ of classical interest, congruence covers are $\E$-versal for a natural choice of $\E$. 

\begin{rem}
	While the notion of versality has been studied intensively for several decades, many of the most interesting normal forms, beginning with Klein's {\em Normalformsatz}, rely on the notion of {\em solvable versality}, which is substantially more flexible. For example, a versal $G$-variety of minimal dimension must be unirational. On the other hand, there are no rational $A_6$ curves (by Klein's classification of finite M\"{o}bius groups), and the level 3 Hilbert modular surface of discriminant 5, which is solvably versal for $A_6$ and conjectured by Hilbert to be of minimal dimension among such varieties, has arithmetic genus equal to 5 (see the discussion in the proof of Propositon~\ref{p:sext} below). A better understanding of the geometric implications of solvable versality (and related notions) could shed significant light on the underpinnings of Hilbert's conjectures.
\end{rem}

\subsection{$\E$-Versal Congruence Covers}\label{subsection:versal} 

We can now record the $\E$-versal congruence covers that we know. Klein's {\em Normalformsatz} provides the paradigmatic example for what follows.
\begin{para}
Let $G$ be a group-scheme of finite type over $\Z$ whose generic fiber, which we also denote by $G,$ is a connected semisimple group. 
A subgroup $\Gamma \subset G(\Z)$ is called a {\em congruence subgroup} if it contains 
\[G(\Z,n) : = {\rm ker}(G(\Z)\to G(\Z/n))\] 
for some positive integer $n.$

We assume that the quotient $X$ of $G(\RR)$ by its maximal compact subgroup is a Hermitian symmetric domain.
Then for any congruence subgroup $\Gamma,$ a theorem of Baily-Borel asserts that $M_\Gamma:=X/\Gamma$ is a complex, quasiprojective variety. For $\Gamma' \subset \Gamma$ congruence subgroups there is a natural map covering map $M_{\Gamma'} \rightarrow M_{\Gamma}.$ 

For $L$ a totally real number field, one can apply the above to $\Res_{L/\Q} G,$ instead of $G.$ In this case we have 
$G(\O_L) \iso \Res_{L/\Q}G(\Z)$ and when we write $G(\O_L)$ we mean that we are working the Hermitian symmetric domain and congruence subgroups associated with the group $\Res_{L/\Q} G.$
Similarly, we write $G(\O_L,n)$ for $\Res_{L/\Q}G(\Z,n).$ If $L= \Q(\sqrt d)$ is real quadratic, denote by  
$G(\O_L,\sqrt d)$ the kernel of 
$$ \Res_{L/\Q}G(\Z) = G(\O_L) \rightarrow G(\O_L/\sqrt d).$$

If $L$ is quadratic imaginary and $a,b$ are non-negative integers, one can consider the unitary group $\U(a,b)$ of signature $a,b$ 
defined by $L.$ This is the subgroup scheme of $\Res_{\O_L/\Z} \GL_n,$ where $n = a+b,$ which fixes the standard Hermitian 
(with respect to conjugation on $K$) form of signature $(a,b)$. One also has the corresponding projective unitary group $\PU(a,b).$ 

In fact for the rest of this section we take $L = \Q(\omega),$ where $\omega$ is a primitive cube root of $1,$ and we will 
only need groups of signature $n-1,1.$ We denote by $\PU(n-1,1)(\Z,\sqrt{-3})$ the kernel of the composite 
$$ \PU(n-1,1)(\Z) \rightarrow \Res_{\O_L/\Z} \PGL_n(\Z) = \PGL_n(\O_L) \rightarrow \PGL_n(\F_3).$$
\end{para}

\begin{thm}[{\bf Klein's Normalformsatz}, \cite{KleinIcos}]
	Let $\E$ be any class of accessory irrationalities containing all quadratic branched covers. Then the 
	level 5 cover of the modular curve
	\[
		M_{\SL(\Z,5)}\to M_{\SL_2(\Z)}.
	\]
	is an $\E$-versal $A_5$-cover. In particular, for any branched cover $\tilde{X}\to X$ with monodromy $A_5$, 
	\[
		\ed(\tilde{X}/X;\E)=\RD(\tilde{X}/X)=1.
	\]
\end{thm}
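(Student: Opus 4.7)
My plan is to identify the level $5$ modular cover with Klein's icosahedral cover $\Pb^1 \to \Pb^1/A_5$ and then translate the classical Normalformsatz into the language of $\E$-versality developed above. The isomorphism $\PSL_2(\F_5) \cong A_5$ identifies the Galois group of $M_{\SL(\Z,5)} \to M_{\SL_2(\Z)}$ with $A_5$; moreover both base and total space are rational curves, and the cover is classically equivariantly birational to the icosahedral $A_5$-cover $\Pb^1 \to \Pb^1/A_5$ (see \cite{KleinIcos}). It thus suffices to show that the icosahedral cover is $\E$-versal for $A_5$.

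Next, I would reduce to a standard versal $A_5$-cover. By Remark~\ref{remark:M0n:versal}, the cover $\M_{0,5} \to \M_{0,5}/S_5$ is versal for $S_5$, and by Lemma~\ref{l:evers}(2) applied to $A_5 \subset S_5$, the cover $\M_{0,5} \to \M_{0,5}/A_5$ is versal for $A_5$. Given any $A_5$-cover $\tilde Y \to Y$, versality provides a rational map $g\colon Y \dashrightarrow \M_{0,5}/A_5$ exhibiting $\tilde Y \to Y$ as a pullback; combined with the functoriality of accessory irrationalities (Axiom (2) of Definition~\ref{definition:accesory}), it is therefore enough to produce a quadratic accessory $E \to \M_{0,5}/A_5$ together with a rational map $E \dashrightarrow \Pb^1/A_5$ pulling the icosahedral cover back to $E \times_{\M_{0,5}/A_5} \M_{0,5}$.

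The heart of the argument will then be Klein's icosahedral substitution. Thinking of $\M_{0,5}/A_5$ as parametrizing quintic polynomials (up to affine changes of coordinate) together with an orientation on their roots, Klein exhibits an explicit rational construction that, after extraction of a single square root of an explicit polynomial in the coefficients, produces a configuration of twelve points on $\Pb^1$ forming the vertices of an icosahedron, canonical up to the icosahedral $A_5$-action; this yields the desired map to $\Pb^1/A_5$ and the required identification of covers (cf.~\cite{KleinIcos}).

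The ``in particular'' assertion follows from Lemma~\ref{l:versmax}: $\E$-versality transfers the upper bound $\ed(M_{\SL(\Z,5)}/M_{\SL_2(\Z)};\E) \le \dim M_{\SL_2(\Z)} = 1$ to any $A_5$-cover, and simplicity of $A_5$ gives the analogous bound $\RD(\tilde X/X) \le 1$; both quantities are at least $1$ since nontrivial covers have positive essential dimension. The main obstacle will be formalizing Klein's icosahedral substitution as a morphism in the modern framework: while the geometric picture is classical, one must verify carefully that the icosahedron depends rationally on the input data and that the resulting rational map pulls back the icosahedral cover to the $A_5$-cover on $\M_{0,5}$ in an $A_5$-equivariant way.
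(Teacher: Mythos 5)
The paper itself gives no proof of this theorem --- it is stated with a citation to Klein \cite{KleinIcos} and treated as classical, so there is no in-paper argument to compare against. Your scaffolding is sound and matches the spirit of what the paper leaves implicit: the level-5 modular cover is $A_5$-equivariantly birational to the icosahedral cover $\Pb^1\to\Pb^1/A_5$ (as the paper notes in its Introduction), $\M_{0,5}\to\M_{0,5}/A_5$ is versal for $A_5$ by Remark~\ref{remark:M0n:versal} and Lemma~\ref{l:evers}, and by Axiom (2) of Definition~\ref{definition:accesory} it then suffices to exhibit one quadratic accessory $E_0\to\M_{0,5}/A_5$ and a rational map $E_0\dashrightarrow\Pb^1/A_5$ pulling the icosahedral cover back to $E_0\times_{\M_{0,5}/A_5}\M_{0,5}$. (A minor order-of-quantifiers point: fix the domain of the Klein substitution first, as a dense open in $\M_{0,5}/A_5$, and then invoke versality to produce $g\colon Y\dashrightarrow\M_{0,5}/A_5$ landing in it; otherwise the composite may not be defined.)

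The actual content of the theorem --- Klein's icosahedral substitution, i.e.\ the rational construction of an icosahedral configuration on $\Pb^1$ from a generic quintic with $A_5$-monodromy after a single square-root extraction --- is asserted, not proved, and you flag it yourself as the ``main obstacle.'' Since the paper likewise defers precisely this to Klein, your proposal is not deficient relative to the paper's treatment, but it is not a self-contained proof. To close the gap one would either carry out the classical Tschirnhaus-plus-icosahedral-resolvent construction and verify the $A_5$-equivariance and rationality, or cite a precise source where this is done (Klein, Part II, or a modern account such as Serre's on icosahedral extensions), rather than a general pointer to \cite{KleinIcos}.
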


This is in contrast to Klein's theorem that $\ed(A_5)=2$.  We can add another example for $A_5$, which was studied in detail by Hirzebruch \cite{Hirzebruch5}, and was likely known to Kronecker, Klein and Hilbert.
\begin{prop}\label{p:Hz}
	The level 2 cover of the Hilbert modular surface 
	\[
		M_{\SL_2(\Z[\frac{1+\sqrt{5}}{2}],2)}\to M_{\SL_2(\Z[\frac{1+\sqrt{5}}{2}])}
	\]
	is versal for $A_5$.
\end{prop}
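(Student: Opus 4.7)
The plan is to identify the Galois group of the cover as $A_5$, apply Hirzebruch's classical identification to reduce to the Clebsch cubic, and then construct a dominant $A_5$-equivariant rational map from a faithful linear representation of $A_5$ to this cubic.

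First, since $2$ is inert in $\O_L = \Z[\tfrac{1+\sqrt 5}{2}]$, we have $\O_L/2 \cong \F_4$. The reduction map $\SL_2(\O_L) \to \SL_2(\F_4)$ is surjective by strong approximation, with kernel $\SL_2(\O_L, 2)$, and since $\SL_2(\F_4) = \PSL_2(\F_4) \cong A_5$, the cover is Galois with group $A_5$.

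Second, by Hirzebruch's theorem \cite{Hirzebruch5}, there is an $A_5$-equivariant birational equivalence between $M_{\SL_2(\O_L, 2)}$ and the Clebsch diagonal cubic surface
\[
	S = \{(x_0:\cdots:x_4) \in \Pb^4 : \textstyle\sum_i x_i = 0 = \sum_i x_i^3\},
\]
where $A_5 \subset S_5$ acts by permutation of coordinates. Crucially, $S$ lies in the hyperplane $\{\sum_i x_i = 0\} \cong \Pb(V_4)$, where $V_4$ denotes the standard $4$-dimensional representation of $S_5$ (restricted to $A_5$), so the $A_5$-action on $S$ is the restriction of a linear $A_5$-action on $V_4$.

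By Lemma~\ref{l:evers}(1), versality of the cover $M_{\SL_2(\O_L,2)} \to M_{\SL_2(\O_L)}$ then reduces to constructing a dominant $A_5$-equivariant rational map from a known versal $A_5$-variety to $S$. Since $V_4 \to V_4/A_5$ is versal by Hilbert~90 (Example~\ref{example:H90}(1)), it suffices to produce a dominant $A_5$-equivariant rational map $V_4 \dashrightarrow S$, or equivalently $\Pb(V_4) \dashrightarrow S$. This is the main obstacle. One approach is to exploit the classical $A_5$-invariant geometry of the Clebsch cubic and its 27 lines---e.g., the single $A_5$-orbit of its 10 Eckardt points, or the Schl\"afli double-six configurations stable under $A_5$---to exhibit an $A_5$-equivariant birational equivalence between $S$ and an equivariant blow-up of $\Pb(V_4)$ along an $A_5$-invariant subscheme. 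Alternatively, one could invoke the Dolgachev--Iskovskikh classification of minimal $A_5$-rational surfaces to pin down the $A_5$-conjugacy class of $S$ in the plane Cremona group and deduce the required $A_5$-equivariant unirationality, thereby completing the proof.
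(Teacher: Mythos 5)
Your setup is fine---identifying the deck group as $\SL_2(\F_4)\cong A_5$, invoking Hirzebruch, and reducing via Lemma~\ref{l:evers}(1) and Example~\ref{example:H90}(1) to the construction of a dominant $A_5$-equivariant rational map from a versal source---but the proof is not complete: the step you yourself call ``the main obstacle'' is precisely where all the content lies, and you leave it as two unexecuted sketches. Neither works as stated. The first is dimensionally impossible: $\Pb(V_4)\cong\Pb^3$ is a threefold, so no blow-up of it can be birational to the surface $S$; and the fact that $S$ sits $A_5$-invariantly inside $\Pb(V_4)$ does not produce a dominant equivariant map $\Pb(V_4)\dashrightarrow S$, since there is no equivariant retraction onto an invariant hypersurface. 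The second sketch (appealing to a classification of $A_5$-actions on rational surfaces) would still leave you to exhibit a dominant equivariant map from a versal variety. If you insist on the cubic-surface model, the classical fact you need is that the Clebsch cubic is the blow-up of $\Pb(W)\cong\Pb^2$ at the $A_5$-orbit of six points given by the icosahedral configuration, where $W$ is a three-dimensional irreducible \emph{linear} representation of $A_5$; then $W\dashrightarrow\Pb(W)\dashrightarrow S$ is dominant and equivariant, and versality follows from Hilbert~90 together with Lemma~\ref{l:evers}(1). The relevant linear model is three-dimensional; $V_4$ is a red herring.

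The paper avoids the difficulty entirely by using Hirzebruch's identification in the form of an $A_5$-equivariant birational equivalence $\M_{0,5}\simeq M_{\SL_2(\Z[\frac{1+\sqrt{5}}{2}],2)}$. In that model the required dominant map from a versal linear representation is immediate: the permutation representation $\AA^5$ maps $S_5$-equivariantly and dominantly onto $\M_{0,5}$ by regarding $(\lambda_1,\dots,\lambda_5)$ as points of $\Pb^1$ and passing to the quotient by the diagonal $\PGL_2$-action, and $\AA^5$ is versal by Hilbert~90. The moral is that the choice of birational model matters: with $\M_{0,5}$ versality is a one-line consequence of Lemma~\ref{l:evers}, whereas with the Clebsch cubic you must still carry out genuine (if classical) geometric work, which your write-up does not do.
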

\begin{proof}
	By \cite{Hirzebruch5}, there is an $A_5$-equivariant birational equivalence
	\[
		\M_{0,5}\simeq M_{\SL_2(\Z[\frac{1+\sqrt{5}}{2}],2)}.
	\]
	There is an $A_5$-equvariant dominant map
	\[
		\AA^5\to \M_{0,5},
	\]
	where the source is the permutation representation and the map is the quotient by the diagonal action of $\Aut(\AA^1)$. By Hilbert's Theorem 90, $\AA^5$ is versal. The proposition now follows from Lemma~\ref{l:evers}. 
\end{proof}

Similar to, but less well-known than, Klein's normalformsatz  is the following (see \cites{Klein8,Gordan}, \cite[p. 318-319]{Hirzebruch77} and \cite[Vol. II, Part 2, Chapters 1-2]{Fricke}).  Denote by $\PSL(2,7)$ 
the image of $\SL_2(\F_7) \rightarrow \PGL_2(\F_7)$; this is a simple group.

\begin{prop}[{\bf Normal forms for $\PSL(2,7)$}]\mbox{}
		\begin{enumerate}
			\item Let $\PGL_2^+(\Z[\sqrt{7}]) \subset \PGL_2(\Z[\sqrt{7}])$ denote the subgroup of elements which 
			lift to an element of $\GL_2(\Z[\sqrt{7}])$ with totally positive determinant.  
						The cover 
				\[
					M_{\PGL_2(\Z[\sqrt{7}],\sqrt{7})}\to M_{\PGL_2^+(\Z[\sqrt{7}])}
				\]
				of Hilbert modular surfaces is versal for the simple group
								 $\PSL(2,7)$.
								 
			\item Let $\E$ be any class of accessory irrationalities containing all $S_4$-covers. Let 	$\widetilde{\SL_2(\Z,7)}$ denote the kernel of the surjection $\SL_2(\Z)\to \PSL(2,7)$. Then the level 7 modular curve
			\[
				M_{\widetilde{\SL_2(\Z,7)}}\to M_{\SL_2(\Z)}
			\]
			is an $\E$-versal $\PSL(2,7)$-cover. In particular, for any branched cover $\tilde{X}\to X$ with monodromy $\PSL(2,7),$ 
			\[
			\ed(\tilde{X}/X;\E)=\RD(\tilde{X}/X)=1.
			\]
	\end{enumerate}
\end{prop}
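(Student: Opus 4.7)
Both claims follow the template of Klein's \emph{Normalformsatz} and Proposition~\ref{p:Hz}: exhibit a $\PSL(2,7)$-equivariant dominant rational map from a (solvably or $\E$-) versal $\PSL(2,7)$-variety to the modular cover in question, then apply Lemma~\ref{l:evers}(1) to transfer $\E$-versality. With versality in hand, the first equality of Lemma~\ref{l:versmax} gives the final $\ed$ and $\RD$ conclusions.

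\textbf{Part (1).} I would use the classical fact that the Hilbert modular surface $M_{\PGL_2^+(\Z[\sqrt{7}])}$ is birational, as a $\PSL(2,7)$-variety, to a rational surface: by Hirzebruch-style uniformization calculations in the spirit of \cite{Hirzebruch77}, the level-$\sqrt{7}$ cover is equivariantly birational to a suitable blow-up of $\Pb^2$ on which $\PSL(2,7)$ acts via its Klein embedding $\PSL(2,7) \hookrightarrow \PGL_3(\CC)$. Since $\Pb^2$ (and hence this blow-up) is dominated by a faithful linear representation $\AA^n$ of $\PSL(2,7)$, and since $\AA^n \to \AA^n/\PSL(2,7)$ is versal by Example~\ref{example:H90}(1), Lemma~\ref{l:evers}(1) yields versality of the Hilbert modular surface cover.

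\textbf{Part (2).} Here I would use the classical Klein--Gordan uniformization of the septic \cites{Klein8,Gordan}. The cover $M_{\widetilde{\SL_2(\Z,7)}} \to M_{\SL_2(\Z)}$ is the Klein quartic $X(7)$ mapping to the $j$-line, parameterizing elliptic curves equipped with a projective level-$7$ structure, with monodromy $\SL_2(\F_7)/\{\pm I\} = \PSL(2,7)$. Given a $\PSL(2,7)$-cover $\tilde{Y} \to Y$, the goal is to construct an $S_4$-accessory irrationality $E \to Y$ together with an elliptic fibration $\mathcal{E} \to E$ whose projective $7$-torsion torsor coincides with $\tilde{Y}|_E$; the classifying map $E \to M_{\SL_2(\Z)}$ then realizes the pullback property. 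The $S_4$-cover arises naturally from Klein's work via the index-$7$ subgroup $S_4 \subset \PSL(2,7)$ (stabilizer of a point in one of the two $7$-point orbits, coming from $\PSL(2,7) = \GL_3(\F_2)$ acting on $\Pb^2(\F_2)$): the corresponding degree-$7$ resolvent of $\tilde Y \to Y$ carries a classical degree-$8$ companion resolvent with an $S_4$-structure, and adjoining this $S_4$-cover rigidifies enough of the data to construct the required elliptic fibration.

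\textbf{Main obstacle.} The serious work lies in part (2): translating the Klein--Gordan elliptic resolvent construction into a precise statement about moduli of elliptic curves with projective level-$7$ structure, and verifying that \emph{only} an $S_4$-accessory irrationality (no further auxiliaries) is required to realize an arbitrary $\PSL(2,7)$-torsor as the projective $7$-torsion of an elliptic curve. This amounts to a rationality statement for the relevant twist of $X(7)$ after base change by an $S_4$-cover, and is the essential content of Klein and Gordan's theorem that had to be supplied in modern language.
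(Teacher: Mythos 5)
Your Part (1) is essentially the paper's argument; the only nuance is that the paper identifies the Hilbert modular surface with $\Pb^2$ itself (not a blow-up), citing Hirzebruch, and obtains the linear $\PSL(2,7)$-action on $\AA^3$ from the 1-forms on the genus-3 level-7 modular curve, then dominates $\Pb^2$ by that $\AA^3$ and invokes Lemma~\ref{l:evers}(1). Same skeleton.

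Your Part (2) diverges, and the divergence is where the proof actually lives. You propose to realize an arbitrary $\PSL(2,7)$-torsor as the projective $7$-torsion of an elliptic fibration after an $S_4$-base change, using the degree-$7$ and degree-$8$ companion resolvents — and you correctly flag that making this precise (and in particular checking that \emph{only} an $S_4$-cover is required) is the crux. That gap is genuine: as written, your argument does not construct the elliptic fibration, nor does it explain where the $S_4$-cover enters. The paper's route sidesteps elliptic curves entirely and is both more concrete and easier to verify. The key observation is that once Part (1) establishes that $\Pb^2 \to \Pb^2/\PSL(2,7)$ is versal, it suffices to produce an $S_4$-accessory irrationality $E \to \Pb^2/\PSL(2,7)$ together with a $\PSL(2,7)$-equivariant dominant rational map $\Pb^2|_E \to M_{\widetilde{\SL_2(\Z,7)}}$. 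One then uses the canonical embedding of the genus-3 modular curve as the Klein quartic $C \subset \Pb^2$; fixing a $\PSL(2,7)$-invariant pairing, the map $x \mapsto L_x \cap C$ (dual line meets quartic) sends $\Pb^2 \dashrightarrow \M_{0,4}/S_4$, and the $S_4$-irrationality $E = \M_{0,4}|_{\Pb^2}$ is precisely the cover that labels the four intersection points; the labeling supplies the dominant rational map to $C = M_{\widetilde{\SL_2(\Z,7)}}$. The $S_4$ thus arises from the degree of the Klein quartic, not from a subgroup $S_4 \subset \PSL(2,7)$ as in your sketch. If you want to pursue your elliptic-fibration route, you would still need a self-contained modern argument; the paper's dual-line trick is the clean way to supply what Klein and Gordan left implicit.
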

\begin{proof} We remark that $\Z[\sqrt 7]^\times = \{\pm \epsilon^n \}$ where $\epsilon = 8+3\sqrt 7$ is the fundamental unit. 
Hence $\PGL_2(\Z[\sqrt{7}],\sqrt{7}) \subset \PGL_2^+(\Z[\sqrt{7}]),$ and in particular, the latter group is a congruence subgroup.

	Consider the modular curve $M_{\widetilde{\SL_2(\Z,7)}}$.  This has genus 3, and so the action of $\PSL(2,7)$ on 1-forms gives a linear action of $\PSL(2,7)$ on $\AA^3$, and an equivariant dominant rational map $\AA^3\to \Pb^2$. Lemma~\ref{l:evers} then implies that $\Pb^2$ is versal for $\PSL(2,7).$ As noted on \cite[p. 318-319]{Hirzebruch77}, there is a 
	$\PSL(2,7)$-equivariant birational isomorphism 
		\[
			\Pb^2\cong M_{\PGL_2(\Z[\sqrt{7}],\sqrt{7})}.
		\]
		This proves the first statement of the proposition. 
		
		The second statement follows from \cites{Klein8, Gordan}.  In modern language, it suffices to construct an accessory irrationality $E\to \Pb^2/\PSL(2,7)$ and a $\PSL(2,7)$-equivariant dominant rational map 
		\[
			\Pb^2|_E\to M_{\widetilde{\SL_2(\Z,7)}}.
		\]
		For this, the canonical embedding of the modular curve $M_{\widetilde{\SL_2(\Z,7)}}$ gives a $\PSL(2,7)$-equivariant map
		\[
			M_{\widetilde{\SL_2(\Z,7)}}\to \Pb^2.
		\]
		As Klein discovered, the image of this map is a quartic curve, the so-called ``Klein quartic''. Fixing any $\PSL(2,7)$-invariant pairing on $\Pb^2$, there is a rational map
		\[
			\Pb^2\to \M_{0,4}/S_4.
		\]
		which sends a point $x\in\Pb^2$ to the intersection of the dual line $L_x$ with the Klein quartic. Let 
		\[
			E=\M_{0,4}|_{\Pb^2}.
		\]
		Then there is a $\PSL(2,7)$-equivariant dominant map 
		\[
			\Pb^2|_E\to M_{\widetilde{\SL_2(\Z,7)}}
		\]
		as claimed.
\end{proof}

We can add the following result to the above. 

\begin{prop}[{\bf Normal forms for the sextic}]\label{p:sext}\mbox{}
	\begin{enumerate}
		\item The congruence cover
			\begin{align*}
				\A_{2,2}&\to \A_2\intertext{and the Picard modular 3-fold}
				\M_{\PU(3,1)(\Z,\sqrt{-3})}&\to M_{\PU(3,1)(\Z)}
			\end{align*}
			are versal for $A_6$.\footnote{Recall that there are exceptional isomorphisms $\Sp_4(\F_2)\cong O_4^+(\F_3)\cong  S_6$.}
		\item For $\E$ any class of accessory irrationalities containing all quadratic covers and composites thereof, the congruence cover
			\begin{align*}
				\A_{2,3}/\F_3^\times&\to \A_{2,A_6}\intertext{and the Picard modular 3-fold}
				M_{\PU(3,1)(\Z,2)}&\to M_{\PU(3,1)(\Z,A_6)}
			\end{align*}
			are $\E$-versal for $A_6$.
		\item For $\E$ any class of accessory irrationalities containing all quadratic and cubic covers and composites thereof, the Hilbert modular surface
			\[
				M_{\widetilde{\SL_2(\Z[\frac{1+\sqrt{5}}{2}],3)}}\to M_{\SL_2(\Z[\frac{1+\sqrt{5}}{2}])}
			\]
			is $\E$-versal for $A_6$, where $\widetilde{\SL_2(\Z[\frac{1+\sqrt{5}}{2}],3)}$ denotes the kernel of the map $\SL_2(\Z[\frac{1+\sqrt{5}}{2}])\to\PGL_2(\F_9)=A_6$.
	\end{enumerate}
	
	In particular (cf.\ Remark~\ref{remark:M0n:versal} and Lemma~\ref{l:versmax}) Hilbert's Sextic Conjecture is equivalent to the statement that the resolvent degree of any (and thus each) of the above covers is $\dim(M_{\SL_2(\Z[\frac{1+\sqrt{5}}{2}])})=2$.
\end{prop}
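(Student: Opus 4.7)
The plan is to deduce each versality statement from the versality of $\M_{0,6}\to\M_{0,6}/S_6$ (Remark~\ref{remark:M0n:versal}) via $G$-equivariant birational identifications, combined with Lemma~\ref{l:evers} (to pass to the $A_6$-subcover and to adjust by dominant rational maps) and Lemma~\ref{l:versmax} (for the final ``in particular'' claim about resolvent degree).

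For statement (1), the cover $\A_{2,2}\to\A_2$ is handled by the Torelli map together with the hyperelliptic description of genus $2$ curves: every principally polarized abelian surface is birationally the Jacobian of a smooth genus $2$ curve, which is in turn determined by its $6$ Weierstrass points in $\Pb^1$. Since the $2$-torsion of the Jacobian is generated by differences of Weierstrass points (modulo the relation that their sum is principal), one obtains an $\Sp_4(\F_2)\cong S_6$-equivariant birational isomorphism between $\A_{2,2}\to\A_2$ and $\M_{0,6}\to\M_{0,6}/S_6$. For the Picard modular $3$-fold, apply the Deligne-Mostow/Terada period map associated to the $\mu_3$-cyclic triple cover of $\Pb^1$ branched at $6$ marked points with weights $(\tfrac{1}{3},\dots,\tfrac{1}{3})$: its primitive cohomology is a Hermitian $\Z[\omega]$-lattice of rank $4$ and signature $(3,1)$, yielding an $S_6$-equivariant birational equivalence of $\M_{0,6}$ with $M_{\PU(3,1)(\Z,\sqrt{-3})}$ compatible with the Galois quotient by $S_6\cong\PU(3,1)(\Z)/\PU(3,1)(\Z,\sqrt{-3})$. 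In each case, $A_6$-versality follows by Lemma~\ref{l:evers}(2) applied to the inclusion $A_6\subset S_6$.

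For statement (2), each cover is an $A_6$-Galois cover obtained from the corresponding object of (1) by passing to the $A_6$-subcover and twisting by a quadratic accessory irrationality (which lies in $\E$); this quadratic irrationality records the ``discriminant'' of the $A_6\subset S_6$ double cover, and the remaining structure is recovered from the exceptional isomorphisms identifying $A_6$ as a natural subquotient of $\PSp_4(\F_3)$ (respectively of the level-$2$ Picard lattice). For statement (3), exploit the exceptional isomorphism $\PSL_2(\F_9)\cong A_6$: the Hilbert modular surface $M_{\SL_2(\Z[\frac{1+\sqrt{5}}{2}])}$ of discriminant $5$ has dimension $2$, and by classical uniformization results (Hirzebruch), it is birational to an explicit $A_6$-equivariant rational surface closely related to the quintic del Pezzo. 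The $A_6$-cover from (1) maps $A_6$-equivariantly to this model after passing to a cubic accessory irrationality (essentially the cubic resolvent of the sextic, which ``absorbs'' the dimension difference between $\M_{0,6}$ and the Hilbert modular surface) and a quadratic one; $\E$-versality then follows from Lemma~\ref{l:evers}.

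The main obstacle is rigorously verifying the $G$-equivariance of each birational identification, especially the Deligne-Mostow period map for the Picard $3$-fold (which requires careful tracking of the $S_6$-action on the Hermitian period lattice and the identification of its level structure with $\PU(3,1)(\Z,\sqrt{-3})$) and the classical dimension reduction in (3); the latter is the most delicate point, since one must arrange that the cubic and quadratic accessory irrationalities combine precisely to realize an $A_6$-equivariant dominant map from a cover of $\M_{0,6}/A_6$ down to the $2$-dimensional Hilbert modular surface, rather than to some higher-dimensional intermediary.
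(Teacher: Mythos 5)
Your treatment of part (1) is essentially sound and close to the paper's: the paper also uses the period map sending six points of $\Pb^1$ to the Jacobian of the branched hyperelliptic curve for $\A_{2,2}$, and reaches the Picard modular $3$-fold via Kondo's identification of the Segre cubic with the Baily--Borel compactification of $M_{\PU(3,1)(\Z,\sqrt{-3})}$ together with Hunt's projective duality between the Segre cubic and the Igusa quartic (the Satake compactification of $\A_{2,2}$); your Deligne--Mostow route is a legitimate alternative input for the same identification. Parts (2) and (3), however, have genuine gaps.

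For (2), the covers are \emph{level-$3$} congruence covers, and they are not obtained from the level-$2$ objects of (1) by passing to the $A_6$-subcover and adjoining a quadratic irrationality recording the discriminant of $A_6\subset S_6$; no quadratic twist converts a $2$-torsion level structure into a $3$-torsion one. The actual mechanism is entirely different: $\Sp_4(\F_3)$ acts linearly on $\AA^4$, so $\AA^4$ is versal by Hilbert 90; projectivizing, $\Pb^3$ becomes $\E$-versal for $\PSp(4,3)$ once $\E$ contains composites of quadratic covers (this absorbs the degree-$2$ central extension $\Sp_4(\F_3)\to\PSp(4,3)$, cf.\ Lemma~\ref{l:psp43}); there is a $6{:}1$ dominant $\PSp(4,3)$-equivariant rational map from $\Pb^3$ onto the Burkhardt quartic $\B\cong\A_{2,3}/\F_3^\times$, and Hunt identifies $\B$ with the Baily--Borel compactification of $M_{\PU(3,1)(\Z,2)}$. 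Versality then restricts to $A_6\subset\PSp(4,3)$ by Lemma~\ref{l:evers}. Your sketch contains none of this machinery. For (3), the correct model of the level-$3$ Hilbert modular surface is van der Geer's $V_{1,2,4}\subset\Pb^5$, the common zero locus of the elementary symmetric polynomials $e_1,e_2,e_4$ with the $A_6$ permutation action; this surface is \emph{not} rational (the paper notes it has arithmetic genus $5$), so your appeal to a rational surface near the quintic del Pezzo is incorrect, and the ``cubic resolvent of the sextic'' is not the source of the cubic irrationalities. The dimension drop is achieved by a Tschirnhaus-transformation argument: inside the parameter space of Tschirnhaus transformations the preimage of the cone over $V_{1,2,4}$ is cut out by a hyperplane family, a quadric cone, and a quartic cone; one adjoins a $2$-group cover to place a family of $2$-planes in the quadric, and then an $S_4$-cover to select one of the four points where such a plane meets the quartic, producing a section and hence the desired $A_6$-equivariant dominant map. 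Both covers lie in $\E$ because $S_4$ is built from quadratic and cubic covers. Without this construction the claimed dominant map to the $2$-dimensional target is unsubstantiated, which is exactly the point you flag as ``the most delicate'' but do not resolve.
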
 

 Let $\PSp(4,3)$ denote the image of $\Sp_4(\F_3) \rightarrow \PSp_4(\F_3)$; this is a simple group. 
 To prove the proposition, we make use of the following lemma. 
\begin{lemma}\label{l:psp43}
	 Let $\PSp(4,3)$ act linearly on $\Pb^3$ and let $G\subset\PSp(4,3)$ be any subgroup. Let $\E$ be any class of accessory irrationalities containing all composites of quadratic covers. Then $\Pb^3$ is an $\E$-versal $G$-variety.
\end{lemma}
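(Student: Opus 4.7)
The plan is to combine Hilbert's Theorem 90 for $\GL_4$ with Merkurjev's theorem on the $2$-torsion of the Brauer group. The faithful projective representation $G \subset \PSp(4,3) \subset \PGL_4(K)$ lifts through $\Sp_4(\F_3) \subset \GL_4(K)$ to a faithful linear representation of a central extension
\[
    1 \to Z \to \widetilde{G} \to G \to 1,
\]
where $Z$ is a subgroup of $\mu_2 = \ker(\Sp_4(\F_3) \to \PSp(4,3))$. Given any Galois $G$-cover $\tilde{Y} \to Y$, classified by $\alpha \in H^1(K(Y), G)$, the obstruction to lifting $\alpha$ to a $\widetilde{G}$-torsor defines a class $\partial\alpha \in H^2(K(Y), Z) \subset H^2(K(Y), \mu_2) = {}_2\mathrm{Br}(K(Y))$.

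By Merkurjev's theorem, $H^2(K(Y),\mu_2)$ is generated by symbols $(a,b)$, each of which is split by adjoining $\sqrt{a}$. Writing $\partial\alpha$ as a finite sum of symbols and adjoining all the corresponding square roots to $K(Y)$ produces a tower $E \to Y$ of quadratic covers, which lies in $\E(Y)$ by hypothesis together with Axiom (4) of Definition~\ref{definition:accesory}. Over $E$ the obstruction vanishes, so $\alpha|_E$ lifts to a class $\tilde{\alpha} \in H^1(K(E), \widetilde{G})$. Hilbert's Theorem 90 applied to the faithful inclusion $\widetilde{G} \subset \GL_4(K)$, following Example~\ref{example:H90}(1) and Lemma~\ref{l:evers}(2), yields a rational map $E \dashrightarrow \AA^4/\widetilde{G}$ realizing $\tilde{\alpha}$. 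Composing with the natural quotient $(\AA^4 \setminus \{0\})/\widetilde{G} \to \Pb^3/G$, which is well defined because $Z$ acts through the scalars $\{\pm 1\}$, produces the required rational map $f\colon E \dashrightarrow \Pb^3/G$. By construction, $f^*(\Pb^3 \to \Pb^3/G)$ is the contracted product of the $\widetilde{G}$-torsor $\tilde{\alpha}$ along $\widetilde{G} \to G$, which recovers $\tilde{Y}|_E$. Shrinking $E$ (or enlarging it by a trivial accessory irrationality) one arranges $f$ to factor through any prescribed dense open $U/G \subset \Pb^3/G$.

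The main obstacle, and the only place where the specific shape of $\E$ matters, is the passage from an arbitrary $2$-torsion Brauer class to an explicit splitting by quadratic extensions; this is precisely Merkurjev's theorem, and it is what refines the solvable versality of Example~\ref{example:H90}(2) to $\E$-versality for $\E$ as small as composites of quadratic covers. The one external input is the classical computation that the Schur multiplier of $\PSp(4,3)$ equals $\mu_2$, which ensures $Z \subset \mu_2$ for every subgroup $G \subset \PSp(4,3)$ and hence that the obstruction truly lies in $H^2(K(Y),\mu_2)$ rather than in a Brauer group of higher period.
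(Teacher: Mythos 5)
Your argument is correct, and it is essentially the argument the paper defers to \cite[Theorem 4.3]{FW}: pass to the double cover $\Sp_4(\F_3)\to\PSp(4,3)$, where Hilbert's Theorem 90 gives versality of $\AA^4$, and use Merkurjev's theorem (the $p=2$ case of Merkurjev--Suslin) to split the obstruction class in ${}_2\mathrm{Br}(K(Y))$ by a composite of quadratic extensions. The paper's proof simply records the $\Sp_4(\F_3)$-equivariant dominant map $\AA^4\to\Pb^3$, invokes Lemma~\ref{l:evers}(1) to get versality for $\Sp_4(\F_3)$, and cites \cite[Theorem 4.3]{FW} for the passage to $\E$-versality for $\PSp(4,3)$; your writeup fills in what that citation is doing, and correctly identifies Merkurjev's theorem as the point where the shape of $\E$ enters (refining the solvable versality of Example~\ref{example:H90}(2) to quadratic covers).

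One small overstatement: you do not actually need the computation of the Schur multiplier of $\PSp(4,3)$ as an ``external input.'' You defined $\widetilde{G}$ as the preimage of $G$ under the fixed surjection $\Sp_4(\F_3)\to\PSp(4,3)$, so $Z=\widetilde{G}\cap\ker(\Sp_4(\F_3)\to\PSp(4,3))\subset\mu_2$ is automatic from the choice of lift; all that matters is that this particular central extension has kernel $\mu_2$ acting by scalars in the $4$-dimensional representation, not that it is universal. (Knowing the Schur multiplier would be relevant only if one insisted on starting from an arbitrary projective faithful representation and wanted an a priori bound on the period of the obstruction; you sidestep that by choosing the explicit lift.) Also, the closure of $\E$ under the relevant composite already follows from the hypothesis that $\E$ contains all composites of quadratic covers; invoking Axiom (4) is harmless but redundant.

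Finally, a point worth making explicit even if you clearly understand it: the map $(\AA^4\setminus\{0\})/\widetilde{G}\to\Pb^3/G$ exists because the $\widetilde{G}$-action on $\AA^4$ commutes with the $\GG_m$ of scalars and $Z$ maps into $\GG_m$, so the induced action on $\Pb^3$ factors through $G$; and the ``dense open'' clause of Definition~\ref{d:evers} is absorbed into the versality of $\AA^4$ for $\widetilde{G}$ applied to the preimage of $U$ in $\AA^4\setminus\{0\}$, as you indicate. No gap.
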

\begin{proof}
	There is (see e.g. \cite{Atlas}) an $\Sp_4(\F_3)$-equivariant dominant rational map
	\[
		\AA^4\to\Pb^3.
	\]
	Lemma~\ref{l:evers} then implies that $\Pb^3$ is versal for $\Sp_4(\F_3)$  As observed in the proof of \cite[Theorem 4.3]{FW}, this implies that $\Pb^3$ is $\E$-versal for $\PSp(4,3)$ and thus, by Lemma~\ref{l:evers} for any $G\subset\PSp(4,3)$ as well.
\end{proof}

\begin{proof}[Proof of Proposition~\ref{p:sext}]
		For versality, as in the proof of Proposition~\ref{p:Hz}, it suffices to prove that there are $A_6$-equivariant birational isomorphisms
		\begin{equation}
		\label{eq:birat6}
			\M_{0,6}\cong \A_{2,2}\cong M_{\PU(3,1)(\Z,\sqrt{-3})}
\end{equation}
		where $\M_{0,6}$ is the moduli of $6$ distinct points in $\Pb^1$. The first isomorphism of \eqref{eq:birat6} is the classical period map which sends 6 points in $\Pb^1$ to the Jacobian of the hyperelliptic curve branched at those points. For the second, consider the {\em Segre cubic threefold} $X_3$ in $\Pb^5$ given by
		\[X_3:=\{[x_0:\cdots :x_5]\in\Pb^5: \sum_{i=0}^5x_i=0=\sum_{i=0}^5x_i^3\}.\]
		The permutation action of $S_6$ on $\Pb^5$ leaves invariant $X_3$, permuting its $10$ nodes. Kondo \cite{Ko} proved that $X_3$ is isomorphic to the Satake-Bailey-Borel compactification of the Picard modular 3-fold $M_{\PU(3,1)(\Z,\sqrt{-3})}$.   One can check that the birational map $M_{\PU(3,1)(\Z,\sqrt{-3})}\to X_3$ is $S_6$-equivariant (cf. e.g. \cite[p. 6, Lemma 2.1]{SBT}).
		
		Hunt proves in \cite[Theorem 3.3.11]{Hu}  that the dual variety to $X_3$ is the so-called {\em Igusa quartic} ${\mathcal I}_4$, which is the moduli space of $6$ points on a conic in $\Pb^2$. The two varieties $X_3$ and ${\mathcal I}_4$ are $S_6$-equivariantly birational. The Igusa quartic ${\mathcal I}_4$ is the Satake compactification of $\A_{2,2}$. The second birational isomorphism in \eqref{eq:birat6} is the composition of these.
		
		Now let $\E$ be any class of accessory irrationalities containing all quadratic covers and composites thereof. As explained in Hunt \cite[Chapter 5.3]{Hu}, there is a $6\colon1$ (in particular, dominant) $\PSp(4,3)$-equivariant rational map
		\[
			\Pb^3\to \B
		\]
		where the action of $\PSp(4,3)$ on $\Pb^3$ is linear and where $\B$ denotes the ``Burkhardt quartic''. There is also a $\PSp(4,3)$-equivariant birational isomorphism
		\[
			\B\cong \A_{2,3}/\F_3^\times. 
		\]		
		Lemma~\ref{l:psp43} implies that $\A_{2,3}/\F_3^\times$ is $\E$-versal for $G=A_6\subset \PSp(4,3)$.
		
		Thus $\A_{2,3}/\F_3^\times$ is $\E$-versal for any subgroup of $\PSp(4,3)$, in particular $A_6$. Finally, Hunt \cite[Theorem 5.6.1]{Hu} proved that $\B$ is $\PSp(4,3)$-equivariantly biregularly isomorphic  to the Baily-Borel compactification of $M_{\PU(3,1)(\Z,2)}$.
		
		For the last statement, let $\E$ be any class of accessory irrationalities containing all composites of quadratic and  cubic covers. By \cite[Chapter VIII, Theorem 2.6]{vdG}, there exists an $A_6$-equivariant birational isomorphism
		\[
			M_{\SL_2(\Z[\frac{1+\sqrt{5}}{2}],3)}\cong V_{1,2,4}\subset\Pb^5
		\]
		where $V_{1,2,4}$ is the common vanishing locus of the 1st, 2nd and 4th elementary symmetric polynomials and $A_6$ acts on $\Pb^5$ via the permutation representation. As above, it suffices to construct an accessory irrationality $E\to\AA^6/A_6$ in $\E$ and an equivariant dominant rational map 
		\[
			\AA^6|_E\to V_{1,2,4}.
		\]
		This follows from the classical theory of Tschirnhaus transformations (see e.g. \cite{W} for a contemporary treatment). Recall that a {\em Tschirnhaus transformation} $T_{\bf b}$, for some ${\bf b}:=(b_0,\ldots,b_5)\in\AA^6$, is the assignment which sends a root $z$ of the generic sextic to
		\[
			\sum_{i=0}^5 b_iz^i.
		\]
		This defines an $S_6$-equivariant rational map
		\[
			T_{\bf b}\colon \AA^6\to\AA^6
		\]
		Letting ${\bf b}$ vary, we have an $\AA^6_{\bf b}$ parameter space of Tschirnhaus tranformations for each sextic, which we view as a trivial bundle
		\[
			\pi_1\colon \AA^6\times\AA^6_{\bf b}\to \AA^6.
		\]
		We also have an evaluation map 
		\[
			\ev\colon \AA^6\times\AA^6_{\bf b}\to\AA^6
		\]
		By direct computation (see e.g. \cite[Definition 3.5 and Lemma 3.6]{W}), $\ev^{-1}(\widetilde{V_{1,2,4}})$, i.e. the preimage under the map $\ev$ of the affine cone over $V_{1,2,4}$,  is the intersection of a (trivial) family of hyperplanes $\widetilde{T_1}$, a cone over a generically smooth quadric $\widetilde{T_2}$ (for smoothness, see e.g. \cite[Lemma 2.6]{W}), and a quartic cone $\widetilde{T_4}$. By the classical theory of quadrics (e.g. \cite[Lemma 5.10]{W}), there exists a finite, generically \'etale map
		\[
			E_0\to \AA^6/A_6
		\]
		with monodromy a 2-group such that the quadric cone $\widetilde{T_2}|_{E_0}$ contains a (trivial) family $\L\to E_0$ of 2-planes over $E_0$. The intersection
		\[
			\L\times_{E_0}\widetilde{T_4} 
		\]		
		is thus the affine cone over a family of 4 distinct points in $\Pb^1$. There thus exists an $S_4$-cover
		\[
			E\to E_0	
		\]
		and a section $\sigma\colon E\to \ev^{-1}(\widetilde{V_{1,2,4}})|_E$. The map
		\[
			\ev\circ \sigma\colon \AA^6|_E\to V_{1,2,4}
		\]
		gives the dominant map we seek. By construction $E\to \AA^6/A_6$ is in the class $\E$, and thus $V_{1,2,4}$ is indeed $\E$-versal. 
\end{proof}

\begin{prop}[{\bf Normal forms for the 27 lines}]\mbox{}
	\begin{enumerate}
		\item The congruence cover
			\begin{align*}
				\M_{\PU(4,1)(\Z,\sqrt{-3})}&\to M_{\PU(4,1)(\Z)}
			\end{align*}
			is versal for $O_5^+(\F_3)\cong W(E_6).$ \footnote{Recall  that there is an exceptional isomorphism of $O_5^+(\F_3)$ with the Weyl group of $E_6$.}
			
		\item For $\E$ any class of accessory irrationalities containing all quadratic covers and 	composites thereof, the congruence cover
		\begin{align*}
			\A_{2,3}/\F_3^\times&\to \A_2\intertext{and the Picard modular 3-fold}
			M_{\PU(3,1)(\Z,2)}&\to M_{\PU(3,1)(\Z)}
		\end{align*}
		are $\E$-versal for the simple group $\PSp(4,3)\cong W(E_6)^+$ (the normal index 2-subgroup of $W(E_6)$).
	\end{enumerate}
	
	In particular, \cite[Conjecture 1.8]{FW} implies and is implied by the resolvent degree of any (and thus each) of the above covers equaling $\dim(\A_2)=3$. 
\end{prop}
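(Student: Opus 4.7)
The plan is to combine the ball-quotient uniformization of cubic surfaces due to Allcock--Carlson--Toledo with classical rationality constructions for the moduli of marked cubic surfaces, and for (2) to directly adapt the argument of Proposition~\ref{p:sext}(2) from the subgroup $A_6$ to the full group $\PSp(4,3)$.

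For part (1), the plan proceeds in two steps. First, I would invoke the Allcock--Carlson--Toledo uniformization of the moduli of smooth cubic surfaces as a ball quotient to produce a $W(E_6)$-equivariant birational equivalence
$$
M_{\PU(4,1)(\Z,\sqrt{-3})} \cong \M^{\mathrm{mark}}_{\mathrm{cub}},
$$
where $\M^{\mathrm{mark}}_{\mathrm{cub}}$ is the moduli of smooth cubic surfaces equipped with a marking of the 27 lines (so that $W(E_6)$ acts by permuting markings). Equivariance follows from the functoriality of their period map, which is constructed from the triple cover of $\Pb^3$ branched along the cubic surface and hence depends intrinsically on the surface. Second, I would produce a $W(E_6)$-equivariant dominant rational map from a faithful linear representation of $W(E_6)$ to $\M^{\mathrm{mark}}_{\mathrm{cub}}$; any such linear representation is versal by Hilbert's Theorem 90 (Example~\ref{example:H90}(1)), and Lemma~\ref{l:evers}(1) then yields versality of $\M^{\mathrm{mark}}_{\mathrm{cub}}$ (and thus of the ball-quotient cover). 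For the map, the natural candidate uses the Cayley--Salmon picture: a smooth marked cubic surface is determined by $6$ labeled points of $\Pb^2$ in general position, with the full $W(E_6)$-action arising from the combined action of $S_6$ on the labels and Cremona transformations, and the resulting moduli space is $W(E_6)$-equivariantly rational (Dolgachev--Ortland).

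For part (2), the plan is to adapt the Burkhardt quartic construction used for $A_6$ in Proposition~\ref{p:sext}(2), now applied to the full group $\PSp(4,3)$. Lemma~\ref{l:psp43} already gives that any $\Pb^3$ carrying a faithful linear $\PSp(4,3)$-action is $\E$-versal for $\PSp(4,3)$, for $\E$ any class containing composites of quadratic covers. Hunt's analysis in \cite[Chapter 5.3]{Hu} supplies a $\PSp(4,3)$-equivariant $6\!:\!1$ dominant rational map $\Pb^3 \to \B$ to the Burkhardt quartic, a $\PSp(4,3)$-equivariant birational equivalence $\B \cong \A_{2,3}/\F_3^\times$, and by \cite[Theorem 5.6.1]{Hu} a $\PSp(4,3)$-equivariant biregular isomorphism between $\B$ and the Baily--Borel compactification of $M_{\PU(3,1)(\Z,2)}$. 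Composing these and applying Lemma~\ref{l:evers}(1) gives the desired $\E$-versality for both $\A_{2,3}/\F_3^\times \to \A_2$ and $M_{\PU(3,1)(\Z,2)} \to M_{\PU(3,1)(\Z)}$. The concluding equivalence with \cite[Conjecture 1.8]{FW} is then immediate from Lemma~\ref{l:versmax}: versality forces all three covers to share a common resolvent degree with the universal 27-lines problem, and this common value is bounded above by $3 = \dim \A_2 = \dim M_{\PU(3,1)(\Z)}$.

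The main obstacle I anticipate is in part (1), specifically verifying the $W(E_6)$-equivariance of the Allcock--Carlson--Toledo period map at a level sufficient for birational identification with $\M^{\mathrm{mark}}_{\mathrm{cub}}$, and then turning the classical Cayley--Salmon picture into a precise $W(E_6)$-equivariant dominant rational map from a linear model. Part (2) is mostly bookkeeping once one observes that Hunt's constructions are equivariant for the full $\PSp(4,3)$, not just $A_6$.
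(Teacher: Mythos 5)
Your proposal follows the same two-step structure as the paper for both parts, and the only real divergence is in part (1): where the paper simply cites Duncan--Reichstein \cite[Lemma 6.1]{DR2} for the versality of the moduli of marked cubic surfaces $\H_{3,3}(27)$, you propose to reconstruct that versality directly from the Cayley--Salmon picture (six labeled points in $\Pb^2$ plus Cremona transformations, via Dolgachev--Ortland). This is in fact the same underlying geometry used in the proof of \cite[Lemma 6.1]{DR2}, so your route is sound in principle, but as written it is only a sketch: you would still need to exhibit a concrete faithful linear $W(E_6)$-representation together with a $W(E_6)$-equivariant dominant rational map to $\H_{3,3}(27)$ (the natural source is the reflection representation, following Coble's parametrization), rather than just appealing to ``$W(E_6)$-equivariant rationality,'' which by itself does not give versality. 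Part (2) is carried out exactly as in the paper, by reading off from the proof of Proposition~\ref{p:sext} that the Burkhardt-quartic argument establishes $\E$-versality for every subgroup of $\PSp(4,3)$, including $\PSp(4,3)$ itself, and then transporting across Hunt's equivariant identifications. Your handling of the final RD-equivalence via Lemma~\ref{l:versmax} is also consistent with the paper's implicit use of it; the key observation, which you correctly identify, is that the $\PSp(4,3)$-covers in (2) are $3$-dimensional, which gives the upper bound $\RD\le 3$ for the common resolvent degree shared with the $27$-lines problem.
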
 
\begin{proof}
		By Allcock-Carlson-Toledo \cite{ACT}, there exists an $O_5^+(\F_3)\cong W(E_6)$-equivariant birational isomorphism
		\[
			\H_{3,3}(27)\simeq M_{\PU(4,1)(\Z,\sqrt{-3})}
		\]
		from the moduli $\H_{3,3}(27)$ of smooth cubic surfaces with a full marking of the intersection of their 27 lines to the Picard modular 4-fold. 
		
		By \cite[Lemma 6.1]{DR2} $\H_{3,3}(27)$ is versal for $W(E_6)$. By Lemma~\ref{l:evers}, both varieties are therefore versal for any subgroup of $W(E_6)$.  
		
		The remaining statements follow from the proof of Proposition~\ref{p:sext} above. Concretely, there we showed that $\A_{2,3}/\F_3^\times$ was $\E$-versal for any subgroup of $\PSp(4,3)$, in particular for $\PSp(4,3)$ itself. Together with the $\PSp(4,3)$-equivariant birational isomorphism
		\[
			\A_{2,3}/\F_3^\times\simeq M_{\PU(3,1)(\Z,2)}
		\]
		recalled in the proof of Proposition~\ref{p:sext}, this implies the result.
\end{proof}

\begin{prop}[{\bf Normal forms for the septic, the octic, and 28 bitangents}]\label{p:8}\mbox{}
	Let $G\subset \Sp_6(\F_2)$ be any subgroup. Then the cover
	\[
		\A_{3,2}\to \A_{3,G}
	\]
	is versal for $G$. In particular (cf.\ Remark~\ref{remark:M0n:versal} and Lemma~\ref{l:versmax}) : 
	
	\begin{enumerate}
		\item Hilbert's 13th Problem is equivalent to 
		\[
			\RD(\A_{3,2}/\A_{3,A_7})=3.
		\]
		\item Hilbert's Octic Conjecture \cite[p. 248]{Hi2} is equivalent to 
			\[
				\RD(\A_{3,2}/\A_{3,A_8})=4.
			\]
		\item \cite[Problem 5.5(2)]{FW}, which asks for the resolvent degree of finding a bitangent on a planar quartic, is equivalent to asking for $\RD(\A_{3,2}\to \A_3)$.
	\end{enumerate}
\end{prop}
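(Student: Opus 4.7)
The plan is to prove versality of $\A_{3,2}\to\A_{3,G}$ by reducing via Lemma~\ref{l:evers}(2) to the single case $G = \Sp_6(\F_2)$: once $\A_{3,2}\to\A_3$ is established as versal for $\Sp_6(\F_2)$, versality for each subgroup $G\subset \Sp_6(\F_2)$ follows automatically. For this single case, I would follow the template of Propositions~\ref{p:Hz} and \ref{p:sext} and construct an $\Sp_6(\F_2)$-equivariant dominant rational map $V\dashrightarrow\A_{3,2}$, where $V$ is affine space equipped with a faithful linear $\Sp_6(\F_2)$-action; versality then follows from Example~\ref{example:H90}(1) (Hilbert's Theorem 90) together with Lemma~\ref{l:evers}(1).

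The construction I propose exploits the Torelli map to identify $\A_{3,2}$ birationally, on its open non-hyperelliptic locus, with the moduli space of smooth plane quartics equipped with a full ordering of their $28$ bitangents (equivalently, of their odd theta characteristics), on which $\Sp_6(\F_2)$ acts through the exceptional isomorphism $\Sp_6(\F_2)\cong W(E_7)/\{\pm 1\}$. The classical Aronhold reconstruction recovers a generic marked quartic from an ordered \emph{Aronhold heptad} of $7$ of its $28$ bitangents, while the space of such heptads is birational to an open subset of $(\Pb^2)^7$ (seven general points determine the bitangents as the base locus of an associated net of cubics, via the Geiser correspondence). Lifting to $(\AA^3)^7$ yields the required linear model. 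The main technical obstacle is verifying that this parametrization is equivariant for the \emph{entire} group $\Sp_6(\F_2)$, and not merely for the stabilizer of an Aronhold heptad; this amounts to a combinatorial check matching the action on ordered bitangents with the action on seven-point configurations via the $E_7$ picture, a compatibility implicit in the classical work of Aronhold, Coble, and Dolgachev but which would need to be spelled out carefully.

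Granted versality of $\A_{3,2}\to\A_3$ for $\Sp_6(\F_2)$, the three consequences are formal. For (1) and (2), the Dickson embeddings of Proposition~\ref{p:di} give $A_n\subset S_n\subset \Sp_6(\F_2)$ for $n=7,8$; then both $\A_{3,2}\to\A_{3,A_n}$ and $\M_{0,n}\to\M_{0,n}/A_n$ are versal for the simple group $A_n$ (the second by Remark~\ref{remark:M0n:versal} and Lemma~\ref{l:evers}(2)), so Lemma~\ref{l:versmax} yields $\RD(\A_{3,2}/\A_{3,A_n}) = \RD(\M_{0,n}/(\M_{0,n}/A_n)) = \RD(n)$, identifying the first equality with Hilbert's 13th Problem for $n=7$ and with the Octic Conjecture for $n=8$. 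For (3), the stabilizer in $\Sp_6(\F_2)$ of a single odd theta characteristic has index $28$, so the degree-$28$ bitangent cover of $\A_3$ has Galois closure $\A_{3,2}\to\A_3$; simplicity of $\Sp_6(\F_2)$ combined with Lemma~\ref{l:versmax} then equates the resolvent degree of the bitangent cover with $\RD(\A_{3,2}/\A_3)$, giving the stated equivalence.
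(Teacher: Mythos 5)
Your high-level strategy (reduce to $G=\Sp_6(\F_2)$ via Lemma~\ref{l:evers}(2), then build an $\Sp_6(\F_2)$-equivariant dominant rational map from a linear $\Sp_6(\F_2)$-variety to $\A_{3,2}$ and invoke Hilbert~90 plus Lemma~\ref{l:evers}(1)) matches the paper's template exactly, and your treatment of items (1)--(3) as formal consequences agrees with the paper. The gap is in the specific linear model you propose.

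You parametrize marked quartics by ordered Aronhold heptads in $(\Pb^2)^7$ via the Geiser/Aronhold reconstruction and then lift to $(\AA^3)^7$. The obstacle you flag as ``a combinatorial check'' is in fact fatal for this model: the subgroup of $\Sp_6(\F_2)\cong W(E_7)/\{\pm 1\}$ that acts \emph{linearly} on $(\Pb^2)^7$ (permuting the seven points) is only $S_7$, the stabilizer of the chosen Aronhold heptad. The remaining generators of $\Sp_6(\F_2)$ --- corresponding to passing to a different Aronhold heptad, i.e.\ to the reflections in $W(E_7)$ not lying in the $A_6$ root subsystem --- act on the configuration space $(\Pb^2)^7/\!/\PGL_3$ by \emph{Cremona transformations} (standard quadratic transformations based at triples among the seven points). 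These do not lift to linear, or even projective-linear, maps on $(\Pb^2)^7$, so there is no $\Sp_6(\F_2)$-equivariant structure on $(\AA^3)^7$ making your map equivariant, and Hilbert~90 cannot be applied to this source. This is precisely the phenomenon that the Coble/Dolgachev--Ortland theory is designed to handle: the paper's proof cites \cite[Proposition 5.7]{FW}, which uses Coble's theta-constant construction from \cite{DO} to produce a genuine \emph{linear} $\Sp_6(\F_2)$-representation on $\AA^7$ together with an $\Sp_6(\F_2)$-equivariant dominant rational map $\AA^7\dashrightarrow \H_{4,2}(28)$. That is a different linear model, not a more careful version of yours; without it (or some substitute honest linearization), the argument does not close.

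Two smaller remarks. First, the dimension of your source ($21$) versus the paper's ($7$) is irrelevant to versality, so that is not a defect per se --- it is the equivariance that fails, not the efficiency. Second, for item (3) you invoke Lemma~\ref{l:versmax}, but the real content there is the elementary fact that resolvent degree is insensitive to passing to the Galois closure; versality is not needed for that step, only for (1) and (2).
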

\begin{proof}
	This follows as in the proof of \cite[Proposition 5.7]{FW}, which in turn draws on \cite{DO} and ideas of Coble. As explained there, there exists an $\Sp_6(\F_2)$-equivariant dominant rational map
	\[
		\AA^7\to\H_{4,2}(28)\simeq  \mathcal{M}_3[2]
	\]
	where $\H_{4,2}(28)$ denotes the moduli of smooth planar quartics with a marking of their 28 bitangents, and $\mathcal{M}_3[2]$ denotes the moduli of genus 3 curves with full level 2 structure. The period mapping gives an $Sp_6(\F_2)$-equivariantly birational isomorphism 
	\[
		\mathcal{M}_3[2]\simeq \A_{3,2},
	\]
	and thus $\A_{3,2}$ is a versal $G$-variety for any $G\subset\Sp_6(\F_2)$ as claimed.
\end{proof}

\begin{para} By Klein \cite{Klein87}, the action $A_7\circlearrowleft \Pb^3$ is solvably versal. As a result, Hilbert's 13th problem is equivalent to the assertion that the cover $\Pb^3\to\Pb^3/A_7$ is a normal form of minimal dimension.
\begin{question}
	Is there a congruence cover $X_{\Gamma'}\to X_\Gamma$ with Galois group $A_7$ and $\dim~ X_\Gamma=3$ which is also $\E$-versal for one of the classes of accessory irrationalities considered in Example~\ref{ex:AI}? 
\end{question}
Finding such a congruence cover would give the transcendental part of Klein's 3-variable solution of the degree 7, as in \cite[Chapter 5.9]{KleinIcos}. Note that Prokhorov's classification \cite[Theorem 1.5]{Pr} of finite simple groups acting birationally on rationally connected 3-folds gives strong constraints on any possible congruence cover.

\begin{question}
	Is there a congruence cover $X_{\Gamma'}\to X_\Gamma$ with Galois group $A_8$ and $\dim~ X_\Gamma=4$ which is also $\E$-versal for one of the classes of accessory irrationalities considered in Example~\ref{ex:AI}? 
\end{question}
\end{para}

\begin{para} As Propositions~\ref{p:sext} and \ref{p:8} show, for $g=2,3$ the $\Sp_{2g}(\F_2)$-variety $\A_{g,2}$ is $G$-versal for any subgroup $G \subset \Sp_{2g}(\F_2).$  
	Hence for $n=6,7,8$ the resolvent degree of the cover 
	$\A_{d_n,2} \rightarrow \A_{d_n,A_n}$ is equal to $\RD(n),$ as defined in the introduction. Interestingly, Hilbert's conjectured value for resolvent degree, and the value of the essential dimension at $2$ for these covers, almost agree : 
	\begin{equation*}
	\begin{array}{lcccc}
	n & 6 & 7 & 8 & 9 \\ \hline
	\text{Hilbert:~} \RD(n) & 2 & 3 & 4 & 4\\ \hline
	 \ed(A_n;2) & 2 & 2 & 4 & 4
	\end{array}
	\end{equation*}
	Note that in these cases the value of $\ed(\A_{d_n,2} \rightarrow \A_{d_n,A_n};2) = \ed(A_n;2)$ is already given by
	Proposition~\ref{p:di}, except when $n=8$ and $g = 3,$ in which case Proposition~\ref{p:di} gives the lower bound $3.$ The actual value
	$\ed(\A_{3,2}\to\A_{3,A_8};2)=4$ follows from versality (e.g. from Lemma~\ref{l:versmax} applied to the modular cover $\A_{4,2}\to \A_{4,A_8}$ arising from the diagonal representation of $A_8=\SL_4(\F_2)$; the $\ed$ at 2 of this cover follows from Corollary~\ref{c:gen}).  
\end{para}

\appendix
	\section{On quadratic representations of $S_n$}\label{appendix}
	\begin{center}
			By Nate Harman
	\end{center}
	\subsection{Statement of Results}
	
	Recall that any linear representation of a $p$-group $G$ over a field $k$ of characteristic $p$ contains a non-zero invariant vector, in particular this implies that the only irreducible representation of $G$ over $k$ is the trivial representation. This does not mean that all representations are trivial though, there are non-split extensions of trivial representations and understanding their structure is a central part of modular representation theory.
	
	In a non-semisimple setting, one basic invariant of a representation is its \emph{Lowey length}.  For representations of $p$-groups in characteristic $p$ it can be defined as follows:  Start with a representation $V$ and then quotient it by its space of invariants to obtain a new representation $V' = V / V^G$, then repeat this process until the quotient is zero.  The Lowey length is the number of steps this takes.
	
	In the above work Farb, Kisin, and Wolfson analyze certain special representations of symmetric groups in characteristic $2$, the so-called Dickson embeddings. Typically denoted $D^{(n-1,1)}$ in the representation theory literature,  these representations have the following key property: Let $n = 2m$ or $2m+1$, these representations have Lowey length 2 when restricted to the rank $m$ (which is the maximum possible) elementary abelian 2-subgroup $H_n$ generated by $(1,2), (3,4), \dots, \text{ and } (2m-1,2m)$.  
	
	This motivates the following definition: We say that an irreducible representation of a $S_n$ in characteristic $p$ is \emph{quadratic} with respect to a maximal rank elementary abelian $p$-subgroup $H$ if it has Lowey length 2 upon restriction to $H$.  The purpose of this note is to prove first that this is only a characteristic $2$ phenomenon, and second that these representations $D^{(n-1,1)}$ are the only representations which are quadratic with respect to some maximal rank elementary abelian $p$-subgroup for $n$ sufficiently large ($n\ge 9$).
	
	\medskip
	
	In characteristic $p > 2$, the maximal rank elementary abelian $p$-subgroups in $S_n$ are just those generated by a maximal collection of disjoint $p$-cycles.  Our first main theorem tells us that there are no quadratic representations in characteristic $p >2$, and in fact we can detect the failure to be quadratic here by restricting to a single $p$-cycle.

	\begin{athm} \label{charnot2}
		Any irreducible representation of $S_n$ with $n \ge p$ in characteristic $p>2$ which is not a character has Lowey length at least $3$ upon restriction to the copy of $C_p$ generated by $(1,2,\dots, p)$, and therefore is not quadratic with respect to any maximal rank elementary abelian $p$-subgroup.
	\end{athm}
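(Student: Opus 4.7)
My plan is to first translate the theorem into a clean algebraic statement. Every finite-dimensional $\F_p[C_p]$-module decomposes as a direct sum of Jordan blocks of sizes at most $p$, and its Loewy length equals the maximum block size. Writing $\sigma = (1, 2, \ldots, p)$, the restriction $V|_{C_p}$ has Loewy length $\geq 3$ if and only if $(\sigma - 1)^{2}$ acts nontrivially on $V$. Thus it suffices to show that if $V$ is a simple $\F_p[S_n]$-module on which $(\sigma - 1)^{2}$ vanishes, then $V$ is one-dimensional.

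I would argue this by contradiction. Suppose $V$ is simple with $(\sigma - 1)^{2} V = 0$. Because $V$ is simple and the $p$-cycles form a single conjugacy class in $S_n$, $(\tau - 1)^{2} V = 0$ for every $p$-cycle $\tau \in S_n$. Parameterize the simple $\F_p[S_n]$-modules by $p$-regular partitions $\lambda \vdash n$: $V = D^\lambda$ is the unique simple quotient of the Specht module $S^\lambda$. The one-dimensional representations correspond to $\lambda = (n)$ (trivial) and to its Mullineux image (sign), and the task reduces to ruling out every other $\lambda$.

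For each such $\lambda$, the core step will be to exhibit an explicit element $v \in D^\lambda$ with $(\sigma - 1)^{2} v \neq 0$. I would choose a standard Young tableau $T$ of shape $\lambda$ in which the entries $1, 2, \ldots, p$ occupy positions spanning at least two rows and at least two columns of $\lambda$; this is always possible since $\lambda$ is neither a single row nor of the shape whose Mullineux image is a single row. Using the identity $\sigma \cdot e_T = e_{\sigma \cdot T}$ for polytabloids and expanding $\sigma \cdot T$ (a non-standard tableau) via the Garnir relations into the basis of standard polytabloids, I would compute $(\sigma - 1)^{2} e_T = \sigma^{2} e_T - 2 \sigma e_T + e_T$ and isolate a nonzero coefficient on a specific standard polytabloid. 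The factor $2 = \binom{2}{1}$ here must be nonzero in $\F_p$, which is precisely where the hypothesis $p > 2$ enters.

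The main obstacle will be verifying that the nonvanishing of $(\sigma - 1)^{2} e_T$ in $S^\lambda$ survives passage to the simple quotient $D^\lambda = S^\lambda / \operatorname{rad}(S^\lambda)$. I would attack this using James' theory of the invariant bilinear form on $S^\lambda$, computing the pairing of $(\sigma - 1)^{2} e_T$ against a carefully chosen polytabloid and showing it is nonzero modulo $p$; alternatively one can track the image via Kleshchev's modular branching rules. The collapse of this argument in characteristic $2$, where the coefficient $\binom{2}{1} = 0$, is consistent with the existence of Dickson's quadratic representations classified in Theorem~\ref{char2}.
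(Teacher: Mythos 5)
Your translation of the problem is correct: Loewy length of $V|_{C_p}$ is at least $3$ iff $(\sigma-1)^2$ does not annihilate $V$, and by conjugation it suffices to test a single $p$-cycle. But the proposal is an outline, not a proof, and the hardest step is exactly the one you flag and leave undone. Exhibiting an explicit polytabloid $e_T$ with $(\sigma-1)^2 e_T\neq 0$ inside the Specht module $S^\lambda$ would already require a careful Garnir computation, and even granting that, showing the nonvanishing survives passage to $D^\lambda = S^\lambda/\mathrm{rad}(S^\lambda)$ is where all the difficulty of the problem is concentrated. You propose pairing against a suitable polytabloid with the James form or invoking Kleshchev branching, but neither is carried out, and there is no reason offered why the relevant pairing should be computable uniformly in $\lambda$. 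As stated, this is a plan with the central lemma missing. The aside that ``the factor $2=\binom{2}{1}$ must be nonzero, which is precisely where $p>2$ enters'' is also not quite right: what actually happens for $p=2$ is that $\sigma$ has order $2$, so $(\sigma-1)^2=0$ identically on any module and the criterion trivializes; the obstruction is structural, not a stray binomial coefficient in an expansion you haven't performed.

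The paper's proof takes a fundamentally different and much more economical route. It first reduces to $S_p$ (or $S_4$ when $p=3$) via a short normal-subgroup argument: if all composition factors of $V|_{S_p}$ were characters then the semisimplicity of $A_{p-1}$ in characteristic $p$ would force $A_n$, hence all of $A_n$, to act trivially, making $V$ a character. After this reduction only two families of simples remain: $p$-core partitions, where $D^\lambda=S^\lambda$ is projective by Nakayama and hence restricts to a projective $C_p$-module of Loewy length $p$; and hooks $(p-k,1^k)$, where Peel's decomposition matrix identifies $D^{(p-k,1^k)}$ as $\Lambda^k D^{(p-1,1)}$, and Green's theorem on tensor products of $C_p$-modules forces all indecomposable summands to have odd length, ruling out length $2$. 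You would do well to study this reduction: it converts a problem about all of $S_n$ into a finite check, which is exactly what your approach lacks. If you want to pursue your direct polytabloid strategy, you should at minimum carry out the claimed Garnir/bilinear-form computation in a small nontrivial case (say $S^{(p-1,1)}$ for $S_p$) and compare with what the paper gets for free from projectivity and Peel's tables.
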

	
	Note that in any characteristic $p >2$ the characters of $S_n$ are just the trivial and sign representations.
	
	\medskip
	
	In characteristic $2$ things are a bit more complicated. While the subgroup $H_n$ of $S_{n}$ is a maximal rank elementary 2-subgroup, it is no longer the unique such subgroup up to conjugation.  Recall that in $S_4$ there is the Klein four subgroup $K = \{e, (12)(34), (13)(24), (14)(23) \}$, which is a copy of $C_2^2$ not conjugate to $H_4$.
	
	We can construct other maximal rank elementary 2-subgroups of $S_{n}$ by taking products
	
	$$\underbrace{K \times K \times \dots \times K}_{m \text{ times}} \times H_{n-4m} \ \subset \  \underbrace{S_4 \times S_4 \times \dots \times S_4}_{m \text{ times}} \times S_{n-4m} \ \subset \ S_{n}$$
	and up to conjugacy though these are all the maximal rank elementary abelian 2-subgroups inside $S_{n}$.
	
	$S_8$ has a special irreducible representation $D^{(5,3)}$ of dimension $8$ which upon restriction $A_8$ decomposes as a direct sum $D^{(5,3)+} \oplus D^{(5,3)-}$ of two representations of dimension $4$. These representations realize the ``exceptional" isomorphism $A_8 \cong GL_4(\mathbb{F}_2)$, or rather they realize two different isomorphisms differing by either by conjugating $A_8$ by a transposition in $S_8$ or by the inverse-transpose automorphism of $GL_4(\mathbb{F}_2)$.  Under this isomorphism the subgroup $K\times K \subset A_8$ gets identified with the subgroup of matrices of the form
	$$\begin{bmatrix}
	1 & 0 & a & b \\
	0 & 1 & c & d \\
	0 & 0 & 1 & 0 \\
	0 & 0 & 0& 1
	\end{bmatrix}$$
	which is manifestly quadratic. 
	Our second main theorem will be to show that there are no other quadratic representations other than the Dickson embedding once $n$ is at least $9$.
	
	\begin{athm} \label{char2}
		Suppose $V$ is a non-trivial irreducible representation of $S_n$ with $n \ge 9$ over a field of characteristic $2$ which is quadratic with respect to a maximal rank elementary abelian $2$-subgroup $H$. Then $V \cong D^{(n-1,1)}$, and $H$ is conjugate to $H_n$.
	\end{athm}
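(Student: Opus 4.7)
The plan is to combine the explicit classification of maximal rank elementary abelian $2$-subgroups of $S_n$ recalled above with an inductive branching analysis. Such subgroups have the form $H = K^m \times H_{n-4m}$ for some $0 \le m \le \lfloor n/4 \rfloor$, up to $S_n$-conjugacy. The condition that $V$ is quadratic with respect to $H$ is equivalent to $(g-1)(h-1)\cdot V = 0$ for all $g,h \in H$, i.e.\ the square of the augmentation ideal of $kH$ annihilates $V$. This property is inherited by every composition factor of $V|_L$ for any subgroup $L \le H$, which is what enables an inductive argument.

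The first step is to handle the case $H = H_n$ by induction on $n$, with base cases $n = 7, 8$ treated by direct computation using known $2$-modular decomposition matrices. Write $H_n = \langle (n-1,n)\rangle \times H_{n-2}$, where the second factor fixes $\{n-1,n\}$ pointwise, and restrict $V$ to $S_{n-2}$; each composition factor is then quadratic with respect to $H_{n-2}$, so by induction is either trivial or isomorphic to $D^{(n-3,1)}$. Combining this with Kleshchev's modular branching rule and the Dickson--Wagner bound cited in the footnote above, according to which $D^{(n-1,1)}$ is the unique minimal-dimension faithful $2$-modular irreducible $S_n$-representation for $n \ge 9$, one concludes that $V \cong D^{(n-1,1)}$.

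The second step is to rule out $H$ containing a Klein factor, i.e.\ $m \ge 1$. The key observation is that both irreducible $2$-modular $S_4$-representations (the trivial module and the two-dimensional $D^{(3,1)}$) are trivial on $K$; hence any composition factor of $V|_{S_4}$ carries a trivial $K$-action. Passing to the fixed subspace $V^K$ and iterating gives that $V^{K^m}$ is a quadratic $H_{n-4m}$-representation of the relevant centralizer, so by the first step is either trivial or isomorphic to $D^{(n-4m-1,1)}$. A dimension count combined with Frobenius reciprocity then shows that no irreducible $V$ can simultaneously satisfy this and be quadratic on the full $H$ once $n \ge 9$: any such candidate would have to arise as an $S_n$-extension of the exceptional $8$-dimensional $D^{(5,3)}$ of $S_8$, which one rules out by comparing dimensions in the finitely many remaining small cases.

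The main obstacle is the second step, and specifically verifying that the exceptional quadratic behavior at $n = 8$ --- witnessed by $D^{(5,3)}$ and the isomorphism $A_8 \cong \GL_4(\F_2)$ --- does not propagate into any irreducible $S_n$-representation for the smallest cases $n = 9, 10, 11$ upon restriction to $K \times K \times H_{n-8}$ or $K \times H_{n-4}$. This is where the hypothesis $n \ge 9$ is tight, and resolving it requires either explicit character- or Jordan-type computations in each case, or a structural argument bounding the rank of $(g-1)$ on $V$ by $\dim V^H$ combined with hook-length estimates on $\dim D^\lambda$. A secondary subtlety is ensuring in the first step that the induction step does not produce spurious quadratic modules built out of multiple copies of $D^{(n-3,1)}$ with nontrivial extension structure; this is controlled by the branching rule together with the fact that $D^{(n-1,1)}$ is the unique module whose restriction has the required socle filtration.
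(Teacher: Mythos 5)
Your outline has genuine gaps at both of its load-bearing steps, and the second one you acknowledge yourself is unresolved. In Step 1, the deduction from ``every composition factor of $V|_{S_{n-2}}$ is trivial or $D^{(n-3,1)}$'' to ``$V\cong D^{(n-1,1)}$'' does not follow from the tools you cite. The Dickson--Wagner bound is a \emph{lower} bound on the degree of a faithful irreducible; it gives no upper bound on $\dim V$ when the restriction has small composition factors, because the multiplicities are not controlled. What you actually need is a classification of those $\lambda$ for which $D^\lambda|_{S_{n-2}}$ has only trivial and natural factors (equivalently, James-type low-dimension classification or a careful normal-node analysis), and that is essentially the hard content of the theorem. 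The paper avoids this entirely: for partitions with at least three parts it exhibits a projective summand of $D^\lambda|_{S_6}$ via the James--Peel Specht filtration of $\mathrm{Ind}_{S_6\times S_{n-6}}^{S_n}(S^{(3,2,1)}\otimes S^\mu)$, and for two-row partitions it shows $D^{(n-k,k)}|_{H_{2k}}$ contains the regular representation, using Benson's identification of $D^{(k+1,k)}$ with the reduced basic spin module and the Nagai--Uno tensor-product recursion for $M(2k)$. A projective summand over a rank-$\ge 2$ elementary abelian $2$-group has Loewy length $3$, which kills quadraticity outright; no branching induction or base-case computation is needed.

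Step 2 is where the real difficulty sits and your proposal does not contain an argument. The observation that both $2$-modular irreducibles of $S_4$ are $K$-trivial is vacuous: any $2$-group acts unipotently in characteristic $2$, so all composition factors of $V|_K$ are always trivial; the question is the Loewy length of $V|_K$, not its composition factors. Your ``dimension count combined with Frobenius reciprocity'' is not specified, and your closing paragraph concedes that excluding the propagation of the $D^{(5,3)}$ phenomenon into $n=9,10,11$ is an open obstacle in your approach. The paper resolves this concretely: $D^{(n-1,1)}|_K$ contains the regular representation of $K$ (reduce by branching to $D^{(4,1)}$, whose restriction to $S_4$ is the permutation module on $4$ points, on which $K$ acts simply transitively); $D^{(n-2,2)}$ and $D^{(n-3,3)}$ reduce via decomposition matrices for $S_7,S_9$ and Specht branching to the same computation; and $D^{(n-k,k)}$ for $k\ge 4$ is projective on $H_{2k}$, whose intersection with $K^m\times H_{n-4m}$ has rank $\ge 2$. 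Without these inputs (or equivalent explicit projectivity statements), your sketch does not establish the theorem.
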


	\subsection{Proofs of Main Theorems}
		
		We will be assuming a familiarity with the modular representation theory of symmetric groups. A standard reference for this material the book \cite{JamesBook} of James, which we will be adopting the notation from and referring to for all the basic results we need.  The irreducible representations of $S_n$ in characteristic $p$ are denoted by $D^\lambda$, for $p$-regular partitions $\lambda$ of $n$.  These arise as quotients of the corresponding Specht modules $S^\lambda$, which are well behaved reductions of the ordinary irreducible representations in characteristic zero.
		
		\subsubsection{Proof of Theorem \ref{charnot2}}
		
		First we will reduce the problem to just looking at representations of $S_p$. For that we have the following lemma:
		
		\begin{alemma}\label{reduction} \mbox{}
			\begin{enumerate}
				
				\item Every irreducible representation $V$ of $S_n$ with $n \ge p$ in characteristic $p > 3$ which is not a character has a composition factor when restricted to $S_p$ which is not a character.
				
				\item Every irreducible representation $V$ of $S_n$ with $n \ge 4 $ in characteristic $3$ which is not a character has a composition factor when restricted to $S_4$ which is not a character.
				
			\end{enumerate}
		\end{alemma}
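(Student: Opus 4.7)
My plan is to prove both parts of Lemma~\ref{reduction} simultaneously, by induction on $n$. The base cases ($n=p$ in part (1) and $n=4$ in part (2)) are immediate: the restriction $V\downarrow^{S_n}_{S_n}$ equals $V$ itself, which by hypothesis is an irreducible non-character and hence is its own non-character composition factor.

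\textbf{Inductive reduction.} For the inductive step, it suffices to show that whenever $V$ is an irreducible non-character representation of $S_n$ with $n>p$ (resp.\ $n>4$), the restriction $V\downarrow^{S_n}_{S_{n-1}}$ admits at least one non-character composition factor $W$. Granting this, the inductive hypothesis applied to $W$ produces a non-character composition factor of $W\downarrow_{S_p}$ (resp.\ $W\downarrow_{S_4}$), which, by transitivity of restriction, is also a composition factor of $V\downarrow_{S_p}$ (resp.\ $V\downarrow_{S_4}$).

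\textbf{Key step via Frobenius reciprocity.} I would argue by contradiction: assume that every composition factor of $V\downarrow_{S_{n-1}}$ is a character. Then the socle of $V\downarrow_{S_{n-1}}$ is a (nonzero) sum of copies of the trivial and sign modules, so Frobenius reciprocity yields a nonzero $S_n$-map from $\mathrm{Ind}^{S_n}_{S_{n-1}}(\mathbf{1})$ or $\mathrm{Ind}^{S_n}_{S_{n-1}}(\mathrm{sgn})$ to $V$, forcing $V$ to be a composition factor of one of these induced modules. The first is the $n$-dimensional permutation module $M_n$, whose composition factors are known (see \cite{JamesBook}) to consist of copies of the trivial module together with a single copy of $D^{(n-1,1)}$; the second is obtained by twisting with $\mathrm{sgn}$. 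Therefore $V\cong D^{(n-1,1)}$ or $V\cong \mathrm{sgn}\otimes D^{(n-1,1)} = D^{m(n-1,1)}$, where $m$ is the Mullineux involution.

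\textbf{Closing the contradiction.} The final step is to verify directly that, for these particular $V$, the restriction $V\downarrow_{S_{n-1}}$ does contain a non-character composition factor. Applying Kleshchev's modular branching rule to the diagram $(n-1,1)$ (or, equivalently, extracting simple quotients from the classical branching $S^{(n-1,1)}\downarrow_{S_{n-1}} = S^{(n-1)}\oplus S^{(n-2,1)}$) shows that $D^{(n-2,1)}$ appears as a composition factor of $D^{(n-1,1)}\downarrow_{S_{n-1}}$. Since $n\geq p+1\geq 6$ in part (1) and $n\geq 5$ in part (2), the dimension of $D^{(n-2,1)}$ is at least $n-3\geq 2$, so it is not a character. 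The case $V=\mathrm{sgn}\otimes D^{(n-1,1)}$ reduces to the previous one by tensoring the composition series with $\mathrm{sgn}$.

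\textbf{Main obstacle.} The delicate point is the modular branching step: because $D^{(n-1,1)}\neq S^{(n-1,1)}$ when $p\mid n$, one cannot simply read composition factors off the Specht branching formula. The cleanest resolution is to appeal directly to Kleshchev's rule describing the socle of $D^\lambda\downarrow_{S_{n-1}}$ in terms of good/normal nodes of $\lambda$; applied to $\lambda=(n-1,1)$, one of the two removable nodes is normal, and its removal produces $D^{(n-2,1)}$ in the socle. A fallback for any small residual cases is a direct character-table computation.
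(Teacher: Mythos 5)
Your argument is correct, but it takes a substantially more machinery-heavy route than the paper's. The paper's proof is a short, direct normal-subgroup argument: if every composition factor of $V\downarrow_{S_p}$ (resp.\ $V\downarrow_{S_4}$) is a character, then restricting further to $A_{p-1}$ (resp.\ to the Klein four group $K$) yields a module with only trivial composition factors; since $p\nmid |A_{p-1}|$ (resp.\ $3\nmid |K|$), Maschke's theorem forces that restriction to be the trivial module, hence $V$ is trivial on the normal closure of $A_{p-1}$ (resp.\ $K$) in $S_n$, which is $A_n$, so $V$ is a character. No induction, no branching, no analysis of $M_n$. Your proof instead runs an induction on $n$ via restriction to $S_{n-1}$, invokes Frobenius reciprocity to pin down $V$ as a quotient of $\mathrm{Ind}^{S_n}_{S_{n-1}}(\mathbf{1})$ or its sign twist, reads off composition factors of the natural permutation module, and then verifies $D^{(n-2,1)}$ appears in $D^{(n-1,1)}\downarrow_{S_{n-1}}$ — the step you rightly flag as the delicate one, since $D^{(n-1,1)}\ne S^{(n-1,1)}$ when $p\mid n$. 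Your resolution (Kleshchev's modular branching rule, or the observation that when $p\mid n$ one has $D^{(n-1,1)}\downarrow_{S_{n-1}}\cong D^{(n-2,1)}$, while when $p\nmid n$ the Specht filtration of $S^{(n-1,1)}\downarrow_{S_{n-1}}$ suffices) does close the gap, so the argument goes through. What the paper's approach buys is the elimination of this entire branching analysis and of the induction; what yours buys is a more "constructive" picture of exactly which composition factor survives at each step, at the cost of invoking the structure of $M_n$ and modular branching. Both are valid; the paper's is the cleaner and more elementary of the two, and in particular avoids needing to single out $D^{(n-1,1)}$ and its Mullineux twist as special cases.
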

		
		\noindent \textbf{Proof:} For part (a) suppose $V$ only has composition factors which are characters when restricted to $S_p$.  If we restrict this to the alternating group $A_p$ all the composition factors must be trivial, as $A_p$ only has the trivial character. If we further restrict to $A_{p-1}$ the whole action must be trivial because representations of $A_{p-1}$ are semisimple in characteristic $p$. However if the action of $A_{p-1}$ is trivial on $V$ then so is the action of the entire normal subgroup generated by $A_{p-1}$ inside $S_n$, which we know is all of $A_n$ if $n>3$. So $V$ must be the trivial as a representation of $A_n$, and is therefore a character of $S_n$.
		
		For part (b), let's again suppose $V$ only has composition factors that are characters when restricted to $S_4$, which implies it only has trivial composition factors when restricted to $A_4$.  If we further restrict to the Klein four subgroup $K$ the whole action must be trivial because representations of $K$ are semisimple in characteristic $p \ne 2$.  As before we see $V$ must be trivial for the normal subgroup of $S_n$ generated by $K$, which we know is all of $A_n$ for $n > 4$. Therefore $V$ is a character.  $\square$
		
		\medskip
		
		\noindent \textbf{Remark:} The modification for characteristic $3$ is necessary because in characteristic $3$ the only irreducible representations of $S_3$ are the trivial and sign representations. Theorem \ref{charnot2} holds vacuously in this case.
		
		\medskip

		It is now enough to prove Theorem \ref{charnot2} for $S_p$ in characteristic $p >3$, and for $S_4$ in characteristic $3$. Let's first focus on the case where $p>3$. If $\lambda$ is a $p$-core, then Nakayama's conjecture (which is actually a theorem, see \cite{JamesBook} Theorem 21.11) tells us $D^\lambda = S^\lambda$ is projective, and hence remains projective when restricted to $C_p$ and therefore has Lowey length $p$.  This leaves those irreducible representations corresponding to hook partitions $ \lambda = (p-k, 1^k)$.
		
		In the simplest case where $\lambda = (p-1,1)$ then $D^\lambda$ is the $(p-2)$-dimensional quotient of the standard $(p-1)$-dimensional representation $S^{(p-1,1)}$ by its one dimensional space of invariants, and one can easily verify this forms a single $(p-2)$-dimensional indecomposable representation of $C_p$. Peel explicitly computed the decomposition matrices for $S_p$ in characteristic $p$ (see \cite{JamesBook} Theorem 24.1), and it follows from his calculation that the remaining irreducible representations $D^\lambda$ with $\lambda = (p-k, 1^k)$ for $1 < k \le p-2$ are just exterior powers $\Lambda^k D^{(p-1,1)}$ of this $(p-2)$-dimensional representation.
		
		Since $k < p$ we know that $\Lambda^k D^{(p-1,1)}$ is a direct summand of $(D^{(p-1,1)})^{\otimes k}$, which as a representation of $C_p$ is just the unique $(p-2)$-dimensional indecomposable representation tensored with itself $k$ times.  Tensor product decompositions for representations of cyclic groups are known explicitly (\cite{Green} Theorem 3), and in particular it is known that a tensor product of two odd dimensional indecomposable representations of $C_p$ always decomposes as a direct sum of odd dimensional indecomposable representations.  So we see $(D^{(p-1,1)})^{\otimes k}$ and $\Lambda^k D^{(p-1,1)} = D^{(p-k, 1^k)}$ only have odd length indecomposable factors when restricted to $C_p$.  If it had Lowey length 1 when restricted to $C_p$ that means the action is trivial, which implies the action of $A_p$ must also be trivial as $A_p$ is simple, but that would imply the original representation of $S_n$ was a character.
		
		In the characteristic $3$ case there are only two irreducible representations of $S_4$, they are the standard 3-dimensional representation $S^{(3,1)} = D^{(3,1)}$ and its sign twisted version $S^{(2,1,1)} = D^{(2,1,1)}$.  These are $3$-core partitions so again by Nakayama's conjecture they are both projective and therefore remain projective when restricted to $C_3$ and have Lowey length 3.
		$\square$
		
		\subsubsection{Proof of Theorem \ref{char2}}
		
		The overall structure of the proof will be to successively rule different classes of representations and maximal rank elementary abelian $2$-subgroups through a sequence of lemmas.  The first such lemma will let us rule out those irreducible representations $D^\lambda$ where $\lambda$ is a $2$-regular partition with at least 3 parts.
		
		\begin{alemma} 
			
			If $\lambda$ is a $2$-regular partition with at least $3$ parts, then the irreducible representation $D^\lambda$ of $S_n$ contains a projective summand when restricted to $S_6$.
		\end{alemma}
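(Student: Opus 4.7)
\medskip
\noindent
\textbf{Proof plan.} The strategy is to exhibit $D^{(3,2,1)}$ as a direct summand of $D^\lambda|_{S_6}$. Because $(3,2,1)$ is the unique $2$-core of $6$, it has $2$-weight zero, so the simple module $D^{(3,2,1)} = S^{(3,2,1)}$ lies in a $2$-block of $kS_6$ of defect zero; equivalently, $D^{(3,2,1)}$ is a projective-injective simple and the block algebra is semisimple (Morita equivalent to $k$). Consequently, for any $kS_6$-module $V$, the multiplicity of $D^{(3,2,1)}$ as a direct summand of $V$ equals its multiplicity as a composition factor. It therefore suffices to show that $D^{(3,2,1)}$ appears as a composition factor of $D^\lambda|_{S_6}$.

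I would proceed by induction on $n = |\lambda|$. The base case $n = 6$ is immediate, since the only $2$-regular partition of $6$ with at least three parts is $\lambda = (3,2,1)$, and $D^{(3,2,1)}$ is itself projective. For the inductive step, the strict inequalities $\lambda_1 > \lambda_2 > \lambda_3 > 0$ force $(3,2,1) \subseteq \lambda$. I would then locate a removable box of $\lambda$ whose removal yields a $2$-regular partition $\mu \vdash n-1$ still having at least three parts: if $\lambda$ has more than three parts, remove the last box; if $\lambda = (a,b,c)$ with $c \geq 2$, remove from the third row; and when $c = 1$, one of the first two rows always admits a removable box preserving $2$-regularity (otherwise $\lambda = (3,2,1)$, contradicting $n \geq 7$). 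The Kleshchev modular branching theorem (or a direct analysis of the Specht filtration of $S^\lambda|_{S_{n-1}}$) then guarantees that $D^\mu$ appears as a composition factor of $D^\lambda|_{S_{n-1}}$, provided the removable box is a normal node for its residue mod $2$.

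By induction $D^\mu|_{S_6}$ contains $D^{(3,2,1)}$ as a composition factor; since composition factors are preserved under subquotients and $D^\lambda|_{S_6} = (D^\lambda|_{S_{n-1}})|_{S_6}$, the same holds for $D^\lambda|_{S_6}$, and the defect-zero argument from the first paragraph then produces the desired projective summand.

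The main obstacle is the verification of the residue condition for the removable box at each inductive step, so as to ensure it is at least a normal node in Kleshchev's sense and thus genuinely produces a composition factor $D^\mu$ rather than being cancelled by competing contributions. A careful bookkeeping of residues along the rim of $\lambda$ is required; alternatively, as a backup, one can argue via the ordinary Littlewood-Richardson decomposition of $S^\lambda|_{S_6 \times S_{n-6}}$, which contains $S^{(3,2,1)} \boxtimes S^\nu$ as a Specht subquotient whenever $(3,2,1)\subseteq\lambda$, and then use the defect-zero splitting to transfer the summand through the reduction mod $2$ and the surjection $S^\lambda \twoheadrightarrow D^\lambda$.
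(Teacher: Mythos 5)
Your opening reduction is correct and is essentially the same observation the paper exploits: $(3,2,1)$ is a $2$-core, so $S^{(3,2,1)}=D^{(3,2,1)}$ lies in a block of defect zero, and any occurrence of it as a composition factor of $D^\lambda|_{S_6}$ splits off as a summand. The problem is that neither of your two routes to producing that composition factor is complete. The inductive route via Kleshchev's branching rule leaves open exactly the step that carries all the content: you must exhibit, for every $2$-regular $\lambda$ with at least three parts and $|\lambda|\ge 7$, a \emph{normal} node $A$ such that $\lambda-A$ is again $2$-regular with at least three parts. This is genuinely delicate at $p=2$: for $\lambda=(4,3,2)$, for instance, removing the first- or second-row node gives $(3,3,2)$ or $(4,2,2)$, neither $2$-regular, so the induction hinges on the third-row node $(3,2)$ being normal, which requires an actual signature computation (all three removable nodes have the same residue). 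You have flagged this as "the main obstacle," but without it there is no proof.

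The backup route has a genuine logical error rather than a gap. The Littlewood--Richardson/Specht-filtration argument applied to $\operatorname{Res}_{S_6\times S_{n-6}}S^\lambda$ shows that $S^{(3,2,1)}$ is a composition factor (hence summand) of $S^\lambda|_{S_6}$, but this does not "transfer through the surjection $S^\lambda\twoheadrightarrow D^\lambda$": the defect-zero summand of $S^\lambda|_{S_6}$ could lie entirely inside $\operatorname{rad}S^\lambda$ and be killed in the quotient. Projective summands do not pass to quotients. The paper's proof is designed precisely to avoid this: it uses the James--Peel Specht filtration of the \emph{induced} module $\operatorname{Ind}_{S_6\times S_{n-6}}^{S_n}(S^{(3,2,1)}\otimes S^\mu)$, in which $S^\lambda$ sits as the \emph{top} filtered quotient, so that $D^\lambda$ is a quotient of the induced module; Frobenius reciprocity then produces a nonzero map \emph{from} $S^{(3,2,1)}\otimes S^\mu$ \emph{into} $\operatorname{Res}D^\lambda$, and the image of such a map is automatically a nonzero direct sum of copies of the projective irreducible $S^{(3,2,1)}$. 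The direction of the homomorphism is the whole point; if you want to salvage your argument, replace restriction of $S^\lambda$ by induction from $S_6\times S_{n-6}$ and invoke the "top of the filtration" statement.
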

		
		\noindent \textbf{Proof:}  Note that any $2$-regular partition $\lambda$ with at least $3$ parts can be written as $(3,2,1) + \mu = (\mu_1 +3, \mu_2 + 2 , \mu_3 + 1, \mu_4, \dots, \mu_\ell)$ for some partition $\mu = (\mu_1, \mu_2, \dots, \mu_\ell)$.  James and Peel \cite{JP} constructed explicit Specht filtrations for $Ind_{S_6 \times S_{n-6}}^{S_n}(S^{(3,2,1)} \otimes S^\mu)$, which have $S^\lambda$ as the top filtered quotient.  In particular this implies $Ind_{S_6 \times S_{n-6}}^{S_n}(S^{(3,2,1)} \otimes S^\mu)$  has $D^\lambda$ as a quotient. However by Frobenius reciprocity we know that
		$$\text{Hom}_{S_n}(Ind_{S_6 \times S_{n-6}}^{S_n}(S^{(3,2,1)} \otimes S^\mu), D^\lambda) \cong \text{Hom}_{S_6 \times  S_{n-6}}(S^{(3,2,1)} \otimes S^\mu, Res_{S_6 \times S_{n-6}}^{S_n}(D^\lambda)).$$
		So since the left hand side is nonzero, the right hand side is as well. 
		
		Now if we look at $S^{(3,2,1)} \otimes S^\mu$ as a representation of $S_6$ it is just a direct sum of $\text{dim} (S^\mu)$ copies of $S^{(3,2,1)}$, which we know is irreducible and projective by Nakayama's conjecture.  In particular the image under any nonzero homomorphism is also just a direct sum of copies of $S^{(3,2,1)}$, so $D^\lambda$ must contain at least one copy of $S^{(3,2,1)}$ as a direct summand. $\square$
		
		\begin{acor} \label{length2}
			If $\lambda$ is a $2$-regular partition with at least $3$ parts, then $D^\lambda$ is not quadratic with respect to any maximal rank elementary abelian $2$-subgroup of $S_n$.
		\end{acor}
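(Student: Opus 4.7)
The plan is to deduce Corollary A.5 directly from Lemma A.4 by combining three facts: $S^{(3,2,1)}$ is projective in characteristic $2$; projective modules over a finite group restrict to projective modules over any subgroup; and any maximal rank elementary abelian $2$-subgroup $H$ of $S_n$ contains a rank $\geq 2$ subgroup supported on some six letters.

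First I would observe that $(3,2,1)$ is a $2$-core (its hook lengths $5,3,3,1,1,1$ are all odd), so by Nakayama's conjecture $S^{(3,2,1)} = D^{(3,2,1)}$ is both irreducible and projective as a $kS_6$-module. Lemma A.4 shows that $D^\lambda|_{\Sym(T)}$ contains $S^{(3,2,1)}$ as a direct summand for some six-element subset $T \subset \{1,\dots,n\}$; by conjugating in $S_n$ we are free to choose $T$ at will.

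Next, let $H \subset S_n$ be any maximal rank elementary abelian $2$-subgroup. By the classification recalled in the appendix, up to conjugacy $H = K^m \times H_{n-4m}$ for some $m \geq 0$. I would choose $T$ so that $E := H \cap \Sym(T)$ has rank at least $2$: if $m \geq 1$, take $T$ to contain the four letters on which one Klein four factor acts, yielding $K \subset E$; if $m = 0$, take $T$ to contain three transposition pairs of $H_n$ (possible since $n \geq 9 > 6$), yielding $E$ of rank $3$. Because $kS_6$ is free as a $kE$-module, restriction preserves projectivity, so the direct summand $S^{(3,2,1)}$ of $D^\lambda|_{\Sym(T)}$ restricts to a nonzero projective $kE$-summand of $D^\lambda|_E$.

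Finally, for $E$ elementary abelian of rank $r \geq 2$, the algebra $kE \cong k[x_1,\dots,x_r]/(x_1^2,\dots,x_r^2)$ is local with unique indecomposable projective $kE$ itself, which has Lowey length $r+1 \geq 3$; hence $D^\lambda|_E$ has Lowey length at least $3$. Since $E \subset H$ yields $I_E \subset I_H$ inside $kH$, powers of augmentation ideals compare accordingly, and Lowey length is monotone under restriction to a smaller group, so $D^\lambda|_H$ has Lowey length at least $3$, proving that $D^\lambda$ is not quadratic with respect to $H$. The only point requiring care is the case-by-case choice of $T$; everything else is formal modular representation theory of elementary abelian $p$-groups.
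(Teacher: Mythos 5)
Your proposal is correct and follows essentially the same route as the paper: conjugate so that the maximal rank elementary abelian $2$-subgroup meets a copy of $S_6$ in a subgroup of rank at least $2$, use Lemma A.4 to extract a projective summand of $D^\lambda|_{S_6}$, restrict to preserve projectivity, and note that projective modules over an elementary abelian $2$-group of rank $\geq 2$ have Loewy length at least $3$. Your version merely spells out the case analysis (Klein factors versus $H_n$) and the monotonicity of Loewy length that the paper leaves implicit.
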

		
		\noindent \textbf{Proof:} After conjugating we may assume that our maximal rank elementary abelian subgroup intersects $S_6$ in an elementary abelian 2-group of rank at least $2$.  The previous lemma says any such irreducible representation must contain projective summand when restricted to $S_6$, and then this summand remains projective upon restriction to the intersection of $S_6$ with our maximal rank elementary $2$-subgroup. Projective representations of $C_2^2$ have Lowey length 3, so the Lowey length for the entire maximal rank elementary abelian $2$-subgroup of $S_n$ must be at least that big. $\square$

		\medskip
		
		This reduces the problem to understanding what happens for two-part partitions $\lambda = (a,b)$.  These representations are much better understood then the general case.  For one thing, the branching rules for restriction are completely known in this case, although we'll just need the following simplified version:
		
		\begin{alemma} \label{branching} (See \cite{kleshchev} Theorem 3.6, following \cite{Sheth})  If $(a,b)$ is a two-part partition of $n$ with $a-b > 1$ then $D^{(a-1,b)}$ appears as a subquotient with multiplicity one inside the restriction of $D^{(a,b)}$ to $S_{n-1}$, and the other composition factors are all of the form $D^{(a-1+r,b-r)}$ with $r>0$.
			
		\end{alemma}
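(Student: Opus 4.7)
The plan is to combine the standard Specht-module branching short exact sequence with a dominance argument, and then invoke Kleshchev's socle theorem for the nonvanishing of the multiplicity. The classical branching rule for Specht modules gives a short exact sequence
\begin{equation*}
0 \to S^{(a,b-1)} \to \mathrm{Res}^{S_n}_{S_{n-1}} S^{(a,b)} \to S^{(a-1,b)} \to 0,
\end{equation*}
and since $D^{(a,b)}$ is a quotient of $S^{(a,b)}$, every composition factor of $\mathrm{Res}^{S_n}_{S_{n-1}} D^{(a,b)}$ is a composition factor of $S^{(a,b-1)}$ or $S^{(a-1,b)}$.

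For the shape of the composition factors I would apply James's submodule theorem: every $D^\nu$ occurring in $S^\mu$ satisfies $\nu \trianglerighteq \mu$ in dominance order, which for two-part partitions amounts to comparing the first parts. Applied to $S^{(a-1,b)}$ and $S^{(a,b-1)}$, this forces every composition factor of $\mathrm{Res}^{S_n}_{S_{n-1}} D^{(a,b)}$ to have first part at least $a-1$; equivalently, of the form $D^{(a-1+r, b-r)}$ with $r \geq 0$, establishing the shape claim.

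For the multiplicity-one statement, dominance excludes $D^{(a-1,b)}$ from the composition factors of $S^{(a,b-1)}$, while the partition $(a-1,b)$ is $2$-regular (using $a-b > 1$), so $D^{(a-1,b)}$ appears as the unique simple head of $S^{(a-1,b)}$ with multiplicity exactly one. The Specht filtration then yields $[\mathrm{Res}^{S_n}_{S_{n-1}}\, D^{(a,b)} : D^{(a-1,b)}] \leq 1$. To upgrade this to an equality I would invoke Kleshchev's modular branching theorem: when $a-b > 1$, the removable node at position $(1,a)$ is a good node of $(a,b)$ in the mod-$2$ residue sense, so $D^{(a-1,b)}$ lies in the socle of $\mathrm{Res}^{S_n}_{S_{n-1}} D^{(a,b)}$ and hence is a genuine (not merely potential) composition factor.

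The main obstacle is the verification of the Kleshchev good-node condition at the last step, which requires a short combinatorial check of the $2$-residues along the top row. Alternatively, one could bypass Kleshchev's theorem entirely by an induction on $a+b$: start from the characteristic-zero branching rule, in which $S^{(a,b)}$ restricts with $S^{(a-1,b)}$ as a summand having multiplicity one, and track this statement through modular reduction using only the dominance bound from Step 2 to rule out cancellations from the other Specht factor.
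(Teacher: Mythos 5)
The paper does not actually prove this lemma; it simply cites Kleshchev's survey (Theorem 3.6, following Sheth), so any self-contained argument is necessarily a different route. Your framework is the right one, and the first two steps are correct and complete: restriction is exact, so the composition factors of $\mathrm{Res}\,D^{(a,b)}$ form a sub-multiset of those of $S^{(a-1,b)}$ and $S^{(a,b-1)}$, and James's dominance theorem (every $D^\nu$ in $S^\mu$ has $\nu\trianglerighteq\mu$, with $D^\mu$ occurring exactly once) forces all factors to be $D^{(a-1+r,b-r)}$ with $r\ge 0$ and gives $[\mathrm{Res}\,D^{(a,b)}:D^{(a-1,b)}]\le 1$.

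The gap is in your lower bound. The claim that $(1,a)$ is a \emph{good} node and hence $D^{(a-1,b)}$ lies in the socle of the restriction is false in general. Concretely, take $(a,b)=(4,1)$ in characteristic $2$: then $\mathrm{Res}_{S_4}D^{(4,1)}\cong M^{(3,1)}=\F_2^4$, whose $K$-fixed space is the line spanned by the all-ones vector, so the socle is the single copy of $D^{(4)}$ and $D^{(3,1)}$ occurs as a composition factor but \emph{not} as a submodule; the good node here is the removable node in row $2$, not row $1$. This failure is systematic whenever the two removable nodes of $(a,b)$ share a $2$-residue (i.e.\ $n$ odd), so the good-node verification you defer cannot succeed. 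Fortunately the repair needs nothing beyond your Step 2, and it is essentially your ``alternative'' made precise — though the danger is not ``cancellation from the other Specht factor'' but rather that the unique copy of $D^{(a-1,b)}$ in $\mathrm{Res}\,S^{(a,b)}$ might live in the kernel of $\mathrm{Res}\,S^{(a,b)}\twoheadrightarrow\mathrm{Res}\,D^{(a,b)}$. That kernel is $\mathrm{Res}(\mathrm{rad}\,S^{(a,b)})$, whose composition factors are restrictions of modules $D^{(a+s,b-s)}$ with $s\ge 1$; by the same dominance argument applied to $S^{(a+s-1,b-s)}$ and $S^{(a+s,b-s-1)}$, every composition factor of $\mathrm{Res}\,D^{(a+s,b-s)}$ has first part at least $a+s-1\ge a$, so $D^{(a-1,b)}$ cannot occur there. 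Hence
\begin{equation*}
[\mathrm{Res}\,D^{(a,b)}:D^{(a-1,b)}]=[\mathrm{Res}\,S^{(a,b)}:D^{(a-1,b)}]=[S^{(a-1,b)}:D^{(a-1,b)}]=1,
\end{equation*}
which completes the proof without any appeal to Kleshchev's branching theorems.
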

		\medskip
		
		Recall that we defined $H_{2k} \subset S_{2k}$ to be the elementary abelian 2-subgroup of $S_{2k}$ generated by the odd position adjacent transpositions $(2i-1, 2i)$ for $1 \le i \le k$,  we will also consider $H_{2k}$ as a subgroup of $S_n$ for $n>2k$ via the standard inclusions of $S_{2k}$ into $S_n$.  The next lemma will be to settle for us exactly which representations are have Lowey length $2$ when restricted to the standard maximal rank elementary abelian subgroup $H_n$.
		
		\medskip
		
		\begin{alemma} \label{H2kproj}
			$D^{(n-k,k)}$ contains a projective summand when restricted to $H_{2k}$.
		\end{alemma}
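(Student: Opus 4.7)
The plan is to show that the $H_{2k}$-transfer $\mathrm{Tr}=\sum_{\sigma\in H_{2k}}\sigma$ has nonzero image on $D^{(n-k,k)}|_{H_{2k}}$. Because $\F_2 H_{2k}$ is a local self-injective algebra whose simple socle is the line spanned by $\mathrm{Tr}$, any element $v$ in a module $V$ over $\F_2 H_{2k}$ satisfying $\mathrm{Tr}(v)\ne 0$ generates a free cyclic $\F_2 H_{2k}$-submodule which splits off as a (projective) direct summand. So it suffices to exhibit $\bar v\in D^{(n-k,k)}$ with $\mathrm{Tr}(\bar v)\ne 0$.

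To produce such an element, I would consider two standard $(n-k,k)$-tableaux: $T$ with second row $(2,4,\ldots,2k)$, whose column stabilizer equals exactly $H_{2k}$, and $U$ with second row $(3,5,7,\ldots,2k+1)$, whose column stabilizer is the rank-$k$ elementary abelian $2$-group $C_U=\langle (1,3),(2,5),(4,7),\ldots,(2k-2,2k+1)\rangle$. Since $C_T=H_{2k}$, the polytabloid $e_T$ is already the $H_{2k}$-transfer of the tabloid $\{T\}$ inside $M^{(n-k,k)}$, so it equals the sum of the $2^k$ tabloids whose second rows pick exactly one element from each pair $\{2i-1,2i\}$, i.e.\ the $H_{2k}$-orbit of $\{2,4,\ldots,2k\}$, which is a free orbit.

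The heart of the argument is a combinatorial computation yielding two identities: $\mathrm{Tr}(e_U)=e_T$ in $S^{(n-k,k)}$, and $\langle e_T,e_U\rangle=1$ in the standard $\F_2$-valued bilinear form on $M^{(n-k,k)}$ (with tabloids orthonormal). Both reduce to parametrizing $\pi\in C_U$ by indicator vectors $(x_1,\ldots,x_k)\in\F_2^k$ (with $x_i=1$ iff the $i$-th generator is used) and asking when $\pi(\{3,5,\ldots,2k+1\})$ lies in the $H_{2k}$-orbit of $\{2,4,\ldots,2k\}$. Tracking, for each $j$, which pair $\{2j-1,2j\}$ the entries $r_i$, $s_i$ fall into, gives the XOR constraints $x_{j-1}=x_{j+1}$ for $2\le j\le k-1$, together with the boundary relations $x_k=1$ (to swap out the stray entry $2k+1$, which is fixed by $H_{2k}$) and $x_{k-1}=0$; this system has a unique solution depending only on the parity of $k$. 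The main obstacle will be carefully handling the boundary behavior at $i=1$ and $i=k$, since the generic recurrence needs modification there; once that is done, the uniqueness of the solution gives both identities at once, since the fibres of the map $(\sigma,\pi)\mapsto \sigma\pi(\{3,5,\ldots,2k+1\})$ have even cardinality except for targets in the free $H_{2k}$-orbit.

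Granted both identities, the lemma follows. Nonvanishing of $\langle e_T,e_U\rangle$ shows $e_T\notin S^{(n-k,k)}\cap (S^{(n-k,k)})^\perp$, hence its image $\bar e_T$ in $D^{(n-k,k)}$ is nonzero. Descending the transfer identity through the surjection $S^{(n-k,k)}\twoheadrightarrow D^{(n-k,k)}$ gives $\mathrm{Tr}(\bar e_U)=\bar e_T\ne 0$ in $D^{(n-k,k)}|_{H_{2k}}$, so by the first paragraph $\bar e_U$ generates a free $\F_2 H_{2k}$-summand of $D^{(n-k,k)}|_{H_{2k}}$, as required.
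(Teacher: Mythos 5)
Your approach is genuinely different from the paper's, and it works. The paper first uses the branching rule to reduce to the ``basic'' case $D^{(k+1,k)}$ of $S_{2k+1}$, then invokes Benson's identification of $D^{(k+1,k)}$ with a reduction of the basic spin representation and the Nagai--Uno recursion $\Res^{S_{2m}}_{S_{2i}\times S_{2m-2i}}M(2m)\cong M(2i)\otimes M(2m-2i)$ to conclude by induction that $M(2k)$ is a single copy of $\F_2 H_{2k}$. You instead work directly in $M^{(n-k,k)}$ with polytabloids, using the fact that $\F_2 H_{2k}$ is local self-injective with socle spanned by the transfer element, so exhibiting a class with nonzero transfer image produces a free (hence projective) summand. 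Your method is more elementary and self-contained: it avoids the literature on spin modules entirely, and it handles all $n\geq 2k+1$ at once rather than reducing to $n=2k+1$. The tradeoff is that it requires a concrete combinatorial verification where the paper can cite known structure theorems.

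Your sketch of that verification is correct, but it is left as an outline rather than a proof, and a couple of assertions deserve to be pinned down. You should make explicit that the generators $(1,3),(2,5),(4,7),\dots,(2k-2,2k+1)$ of $C_U$ are pairwise disjoint transpositions, so each element $2i+1$ of the second row of $U$ moves under $\pi=\prod c_i^{x_i}$ only via $c_i$, and the image set is $\{a_1,\dots,a_k\}$ with $a_i\in\{2i+1,\,2i-2\}$ (reading $2\cdot 1-2$ as $1$). The free $H_{2k}$-orbit of $\{2,4,\dots,2k\}$ consists exactly of those $k$-sets meeting each pair $\{2j-1,2j\}$ in one point and avoiding $\{2k+1,\dots,n\}$; since $a_i$ lands in pair $i+1$ when $x_i=0$ (or escapes to $2k+1$ when $i=k$) and in pair $i-1$ (pair $1$ for $i=1$) when $x_i=1$, the bijectivity requirement forces $x_k=1$, $x_{k-1}=0$, and $x_{j-1}=x_{j+1}$ for $2\le j\le k-1$, with the pair-$1$ condition $x_1\ne x_2$ then holding automatically; this gives the unique $\pi_0$. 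From uniqueness both identities drop out: for $\pi\ne\pi_0$ the set $\pi\{U\}$ has nontrivial $H_{2k}$-stabilizer so $\mathrm{Tr}(\pi\{U\})=0$ in characteristic $2$, hence $\mathrm{Tr}(e_U)=\mathrm{Tr}(\pi_0\{U\})=e_T$; and $\langle e_T,e_U\rangle$ counts pairs $(\sigma,\pi)$ with $\sigma\{T\}=\pi\{U\}$, which forces $\pi=\pi_0$ and then a unique $\sigma$, so the pairing is $1$. Once this is written out your argument is complete and correct. (Minor points: the symbols $r_i,s_i$ in your sketch are never introduced; and it is worth stating that $2$-regularity of $(n-k,k)$ is exactly the condition $n\ge 2k+1$ needed for $U$ to exist.)
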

		
		\noindent \textbf{Proof:} We know from the branching rules (Lemma \ref{branching}) that $D^{(n-k,k)}$ contains a copy of $D^{(k+1,k)}$ as a subquotient when restricted to $S_{2k+1}$, so it is enough to verify it for $D^{(k+1,k)}$.  Moreover $H_{2k} \subset S_{2k}$ so really this calculation is taking place inside $M(2k) := \text{Res}^{S_{2k+1}}_{S_{2k}}D^{(k+1,k)}$. 
		
		These representations $D^{(k+1,k)}$ and $M(2k)$ are well studied. Benson proved $D^{(k+1,k)}$ is a reduction modulo $2$ of the so-called basic spin representation of $S_{2k+1}$ in characteristic zero (\cite{BensonSpin} Theorem 5.1).  Nagai and Uno (\cite{Uno} Theorem 2, or see \cite{Okuyama} Proposition 3.1 for an account in English), gave explicit matrix presentations for $M(2k)$ and showed that they have the following recursive structure:
		
		$$ \text{Res}^{S_{2m}}_{S_{2i} \times S_{2m-2i}} M(2m) \cong M(2i)\otimes M(2m-2i) $$
		
		In particular since $M(2)$ can easily be seen to be the regular representation of $S_2 = H_2$, it follows by induction that $M(2k)$ is projective (and just a single copy of the regular representation) for $H_{2k}$.  $\square$

		\begin{acor}  The only nontrivial irreducible representation of $S_n$ which is quadratic with respect to $H_n$ is $D^{(n-1,1)}$.
			
		\end{acor}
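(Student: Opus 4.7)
The plan is to reduce to two-part partitions and then exclude all but one case by a Lowey-length estimate on a well-chosen subgroup. First I would invoke Corollary \ref{length2} to dispense with $2$-regular partitions having at least three parts, so that any nontrivial irreducible $V$ quadratic with respect to $H_n$ must be of the form $V = D^{(n-k,k)}$ for some $k \geq 1$. The goal is then to exclude $k \geq 2$, leaving only the Dickson embedding $D^{(n-1,1)}$.

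For $k \geq 2$, I would pass to the subgroup $H_{2k} \subset H_n$ coming from the standard inclusion on the first $2k$ letters. The key general fact is that for a $p$-group in characteristic $p$ the Jacobson radical coincides with the augmentation ideal, and the augmentation ideal of $\F_2[H_{2k}]$ is contained in that of $\F_2[H_n]$; hence Lowey length can only decrease under restriction from $H_n$ to $H_{2k}$, so it suffices to show $D^{(n-k,k)}|_{H_{2k}}$ has Lowey length at least $3$.

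This is where Lemma \ref{H2kproj} enters: it supplies a projective direct summand in $D^{(n-k,k)}|_{H_{2k}}$, i.e. a copy of the regular representation of $H_{2k}$. A direct computation then pins down its Lowey length: identifying $\F_2[H_{2k}] \cong \F_2[y_1,\dots,y_k]/(y_i^2)$, the Jacobson radical is $(y_1,\dots,y_k)$, whose $k$-th power is spanned by $y_1 y_2 \cdots y_k \neq 0$ and whose $(k+1)$-st power vanishes. Thus the regular representation of $H_{2k}$ has Lowey length exactly $k+1$, and in particular at least $3$ when $k \geq 2$. This forces $D^{(n-k,k)}|_{H_n}$ to have Lowey length at least $3$ and so to fail to be quadratic.

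Only the case $k=1$ survives, which is the Dickson embedding $D^{(n-1,1)}$, already known to be quadratic with respect to $H_n$. I do not anticipate a substantive obstacle: both main inputs (Corollary \ref{length2} and Lemma \ref{H2kproj}) have been established immediately above, and the remaining ingredient—the Lowey length of the regular representation of an elementary abelian $2$-group—is an elementary calculation in its group algebra.
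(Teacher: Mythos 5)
Your argument is correct and is essentially the same as the paper's: both use Corollary~\ref{length2} to reduce to two-part partitions $D^{(n-k,k)}$ and then Lemma~\ref{H2kproj} to obtain a projective $H_{2k}$-summand whose Lowey length $k+1$ forces $k=1$. You merely make explicit two steps the paper leaves implicit — the Lowey length of the regular representation of $\F_2[H_{2k}] \cong \F_2[y_1,\dots,y_k]/(y_i^2)$, and the monotonicity of Lowey length under restriction to the subgroup $H_{2k}\subset H_n$ via inclusion of augmentation ideals — both of which are correct and are exactly what the paper's phrase ``has Lowey length at least $k+1$ as an $H_n$ representation'' is invoking.
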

		
		\noindent \textbf{Proof:} Corollary \ref{length2} tells us that if $\lambda$ has at least $3$ parts, $D^\lambda$ has Lowey length at least 3 when restricted to $H_n$.  Then Lemma \ref{H2kproj} tells us that $D^{(n-k,k)}$ has Lowey length at least $k+1$ as a $H_n$ representation and is therefore not quadratic for $k>1$.  $\square$
		
		\medskip
		
		To finish the proof of Theorem \ref{char2} we need to show that for $n$ at least $9$ that there are no representations which are quadratic with respect to to any of these other maximal rank elementary abelian $2$-subgroups $K^m \times H_{n-4k}$ with $m \ge 1$. Lemma \ref{length2} rules out $D^\lambda$ for $\lambda$ of length at least $3$, so again we will just need to address the case when $\lambda$ is a length 2 partition.
		
		We do this through a series of lemmas ruling out different cases, but first will state the following well-known fact from the modular representation theory of symmetric groups:
		
		\begin{alemma} \label{spechtbranch} (\cite{JamesBook} Theorem 9.3) If $\lambda$ is a partition of $n$, then $S^\lambda$ restricted to $S_{n-1}$ admits a filtration $$0=M_0 \subset M_1 \subset \dots \subset M_N \cong S^\lambda$$ such that the successive quotients $M_i / M_{i-1}$ are isomorphic to Specht modules $S_\mu$, and $S^\mu$ appears if and only if $\mu$ is obtained from $\lambda$ by removing a single box, in which case it appears with multiplicity one.
		\end{alemma}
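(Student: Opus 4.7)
The plan is to construct the filtration explicitly via the standard basis of polytabloids. Recall that $S^\lambda$ has a $k$-basis $\{e_t\}$ indexed by standard $\lambda$-tableaux, and that in any standard $\lambda$-tableau the largest entry $n$ must occupy a removable corner of $\lambda$.

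List the removable corners of $\lambda$ as $c_1, c_2, \dots, c_N$, ordered by row (say from bottom to top), and let $\lambda^{(i)}$ denote the partition obtained by removing $c_i$. Define
\[
M_i \;=\; \mathrm{span}_k\bigl\{\,e_t \;:\; t \text{ standard of shape } \lambda,\ n\in c_j \text{ for some } j\le i\,\bigr\} \;\subset\; S^\lambda.
\]
This gives an increasing chain $0 = M_0 \subset M_1 \subset \cdots \subset M_N = S^\lambda$ of $k$-subspaces, and the task is to check that these are $S_{n-1}$-submodules whose successive quotients are the claimed Specht modules.

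The first and main step is to verify that each $M_i$ is $S_{n-1}$-stable. Any $\sigma \in S_{n-1}$ fixes the label $n$, so $\sigma\cdot e_t = e_{\sigma t}$ has $n$ in the same box as in $t$; however $\sigma t$ need not be standard, and one must apply the Garnir relations to rewrite $e_{\sigma t}$ as a $k$-linear combination of standard polytabloids. The content of the stability claim is that, with our chosen ordering of corners, Garnir straightening never produces a standard polytabloid in which $n$ occupies a corner of larger index than in $t$. This is a combinatorial property of the Garnir relations: straightening exchanges entries across adjacent columns in a way that can only move the maximal entry $n$ to a row at or below its original one, which under our ordering corresponds to a non-increasing corner index.

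Granting stability, for each $i$ one defines an $S_{n-1}$-linear map
\[
\phi_i \colon M_i \longrightarrow S^{\lambda^{(i)}}, \qquad e_t \longmapsto \begin{cases} e_{t\setminus c_i} & \text{if } n\in c_i, \\ 0 & \text{if } n\in c_j \text{ for some } j<i, \end{cases}
\]
where $t\setminus c_i$ is the standard $\lambda^{(i)}$-tableau obtained by deleting the box $c_i$ from $t$. One checks that $\phi_i$ is well-defined, surjective, $S_{n-1}$-equivariant (since $S_{n-1}$ acts identically on the entries $1,\dots,n-1$ of $t$ and of $t\setminus c_i$), and has kernel exactly $M_{i-1}$, yielding $M_i/M_{i-1} \cong S^{\lambda^{(i)}}$.

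The main obstacle is the stability of $M_i$ under $S_{n-1}$, which reduces to the combinatorial statement that Garnir straightening respects the row of the entry $n$ in the direction prescribed by our ordering of corners. Once this is established, the remaining verifications—well-definedness, surjectivity, equivariance, and computation of the kernel of $\phi_i$—are routine consequences of the universal properties of polytabloids and the standard basis theorem for Specht modules, and each removable $\mu$ appears exactly once because each standard $\lambda$-tableau has a unique corner containing $n$.
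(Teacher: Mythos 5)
This lemma is cited in the paper from James's book (Theorem 9.3), not proved internally, so there is no in-paper proof to compare against. Your high-level approach --- filtering $S^\lambda$ by the removable corner occupied by $n$ in a standard tableau --- is the standard one. However, the direction of your chosen ordering, and hence the combinatorial claim that carries all the weight, is reversed, and as written the $M_i$ are not $S_{n-1}$-submodules. Take $\lambda=(2,1)$, $n=3$: the standard tableaux are $t_1$ with rows $\{1,2\}$, $\{3\}$ (so $n$ in the bottom corner) and $t_2$ with rows $\{1,3\}$, $\{2\}$ ($n$ in the top corner), and a direct polytabloid computation gives $(1\,2)\cdot e_{t_1}=e_{t_1}-e_{t_2}$ and $(1\,2)\cdot e_{t_2}=-e_{t_2}$. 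So the span of $e_{t_2}$ --- $n$ in the \emph{top} corner --- is the $S_2$-submodule (isomorphic to $S^{(1,1)}$), while the span of $e_{t_1}$, your $M_1$, is not $S_2$-stable. The identity $(1\,2)e_{t_1}=e_{t_1}-e_{t_2}$ shows straightening moving $n$ from row $2$ to row $1$, i.e.\ upward --- the opposite of your claim that Garnir straightening ``can only move the maximal entry $n$ to a row at or below its original one.''

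The fix is to order the corners from top to bottom and prove the reversed inequality: for $\sigma\in S_{n-1}$ and $t$ standard, every standard polytabloid appearing in $e_{\sigma t}$ has $n$ in a row $\leq$ the row of $n$ in $t$. This is true, and since you flagged it as the main gap, here is the reason. Because $n$ is the largest entry it always sits at the bottom of its column, so column-sorting never moves it. In a Garnir step at a row descent in position $(i,j)$, the set $A$ (entries of column $j$ in rows $\geq i$) dominates the set $B$ (entries of column $j+1$ in rows $\leq i$) entrywise; so if $n$ is involved at all it lies in $A$, and any transversal element of $S_{A\cup B}/(S_A\times S_B)$ either fixes $n$ or moves it into a box of $B$, hence into column $j+1$ at a row $\leq i$, after which re-sorting that column places $n$ at its bottom, whose row index is at most that of column $j$. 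Iterating, the row of $n$ is non-increasing throughout straightening. With the ordering and this inequality corrected, the maps $\phi_i$ and the identifications $M_i/M_{i-1}\cong S^{\lambda^{(i)}}$ go through as you describe.
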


		\begin{alemma}
			$D^{(n-1,1)}$, for $n \ge 5$, contains a projective summand when restricted to $K$, and is therefore not quadratic with respect to any group containing $K$.
			
		\end{alemma}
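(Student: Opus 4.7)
The strategy is to exhibit an explicit copy of the regular representation $\F_2[K]$ as a direct summand of $D^{(n-1,1)}$ restricted to $K$, using the hypothesis $n\geq 5$ to find a coordinate fixed by $K$.

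First, I embed $K = \{e,(12)(34),(13)(24),(14)(23)\} \subset S_4 \hookrightarrow S_n$ in the usual way so that $K$ acts regularly on $\{1,2,3,4\}$ and fixes $\{5,\dots,n\}$. The permutation module decomposes as $\F_2^n \cong \F_2[K]\oplus \F_2^{\,n-4}$, with $K$ acting as the regular representation on $\langle e_1,e_2,e_3,e_4\rangle$ and trivially on $\langle e_5,\dots,e_n\rangle$.

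Second, since $n\geq 5$ the vector $v := e_1+e_5$ is defined; let $W\subset \F_2^n$ be the $\F_2$-span of its $K$-orbit $\{e_i+e_5 : i=1,2,3,4\}$. A short check shows $\sum_{i=1}^4 a_i(e_i+e_5)=\sum a_i e_i+(\sum a_i)e_5=0$ forces all $a_i=0$, so these four elements are linearly independent; as $K$ acts freely on the orbit, $W\cong \F_2[K]$ as $K$-modules. Each generator $e_i+e_5$ has zero augmentation, hence $W$ lies in the hyperplane $H=\{\sum x_j=0\}\subset\F_2^n$.

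Third, I place $W$ inside $D^{(n-1,1)}$. When $n$ is odd, $D^{(n-1,1)}=H$ and $W\subset D^{(n-1,1)}$ directly. When $n$ is even, $D^{(n-1,1)}=H/\Delta$ where $\Delta=\F_2\cdot(1,\dots,1)$; since any element of $W$ has $x_j=0$ for $j\geq 6$, we have $W\cap \Delta=0$, so the image $\bar W\subset H/\Delta$ remains isomorphic to $\F_2[K]$. Being projective over the Frobenius algebra $\F_2[K]$, this submodule is also injective and therefore splits off as a direct summand of $D^{(n-1,1)}\!\restriction_K$.

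Finally, the regular representation $\F_2[K]\cong \F_2[a,b]/(a^2,b^2)$ has radical series $\F_2[K]\supset (a,b)\supset (ab)\supset 0$ of length $3$, so Lowey length $3$. Hence $D^{(n-1,1)}\!\restriction_K$ has Lowey length $\geq 3$, and the same holds upon restriction to any subgroup of $S_n$ containing $K$, showing $D^{(n-1,1)}$ is not quadratic with respect to any such group. The only genuine content is step two (the explicit free generator), where the use of the fixed coordinate $e_5$ — and hence the hypothesis $n\geq 5$ — is essential.
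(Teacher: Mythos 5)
Your proof is correct, and it takes a different route from the paper's. The paper reduces to the single case $D^{(4,1)}$ by invoking the branching rule (its Lemma on two-row restrictions) to find $D^{(4,1)}$ as a composition factor of $D^{(n-1,1)}|_{S_5}$, and then observes that $D^{(4,1)}|_{S_4}$ is the permutation module $\F_2^4$, on which $K$ acts simply transitively, i.e.\ regularly. You instead work uniformly in all degrees $n\geq 5$: the orbit $\{e_i+e_5\}_{i=1}^4$ of the augmentation-zero vector $e_1+e_5$ spans a free rank-one $\F_2[K]$-submodule of the sum-zero hyperplane, and your check that it meets $\Delta$ trivially correctly handles the $n$ even case $D^{(n-1,1)}=H/\Delta$ (note $n$ even forces $n\geq 6$, so coordinate $6$ is available to separate your span from the all-ones vector). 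Your appeal to self-injectivity of $\F_2[K]$ to split off the summand is the same mechanism the paper's reduction implicitly relies on (a projective subquotient of a module over a Frobenius algebra splits off as a summand), so the two arguments share their endgame; what you gain is independence from the branching lemma and a completely explicit generator, at the cost of a slightly longer case analysis. The monotonicity of Loewy length under enlarging the subgroup, which you use in the last step, is the same observation the paper makes in its Corollary on partitions with at least three parts.
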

		
		\noindent \textbf{Proof:} It suffices to prove it for $D^{(4,1)}$ as every $D^{(n-1,1)}$ for $n >5$ contains it as a composition factor upon restriction to $S_5$ by Lemma \ref{branching}.   This representation $D^{(4,1)}$ is just the $4$ dimensional subspace of $\mathbb{F}_2^5$ where the sum of the coordinates is zero.  If we restrict this representation to $S_4$ this can be identified with the standard $4$-dimensional permutation representation via the map $(a,b,c,d) \to (a,b,c,d,-a-b-c-d)$.  The restriction of the standard action of $S_4$ on a $4$-element set to $K$ is simply transitive, so this representation is just a copy of the regular representation. $\square$
		
		\medskip 
		
		\begin{alemma}
			$D^{(n-2,2)}$ for $n \ge 7$ and $D^{(n-3,3)}$ for $n \ge 9$ each contain a projective summand when restricted to $K$, and are therefore not quadratic with respect to any group containing $K$.
			
		\end{alemma}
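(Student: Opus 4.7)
My plan is to mirror the proof of the preceding lemma: first reduce to base cases via the branching rule, then verify those base cases by an explicit computation. Iterating Lemma~\ref{branching} shows that $D^{(5,2)}$ appears as a composition factor of $\Res^{S_n}_{S_7} D^{(n-2,2)}$ for every $n \geq 7,$ and likewise that $D^{(6,3)}$ appears as a composition factor of $\Res^{S_n}_{S_9} D^{(n-3,3)}$ for every $n \geq 9.$ Since $\F_2[K]$ is a Frobenius algebra, its unique projective indecomposable (the regular representation) is also injective; consequently, if an $\F_2[K]$-module $V$ admits $\F_2[K]$ as a subquotient, then $\F_2[K]$ must in fact split off as a direct summand of $V.$ Indeed, take a minimal submodule $N \subset V$ surjecting onto $\F_2[K],$ split the surjection $N \twoheadrightarrow \F_2[K]$ using projectivity, then split the resulting inclusion $\F_2[K] \hookrightarrow N \hookrightarrow V$ using injectivity. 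Thus it suffices to prove the base cases: that $D^{(5,2)}|_K$ and $D^{(6,3)}|_K$ each contain a copy of $\F_2[K]$ as a direct summand, with $K = \langle (12)(34),(13)(24)\rangle$ sitting inside $S_4 \subset S_n.$

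For the base cases I would use the standard criterion that a representation $V$ of $K$ over $\F_2$ has a projective summand if and only if the norm element $N_K := \sum_{g \in K} g$ acts nontrivially on $V.$ Concretely, I would exhibit a polytabloid $e_t \in S^\lambda$ whose four $K$-translates are linearly independent in the permutation module $M^\lambda,$ and then verify that $N_K \cdot e_t$ remains nonzero after passage to the irreducible quotient $D^\lambda = S^\lambda / \mathrm{rad}\,S^\lambda.$ For $\lambda = (5,2)$ a natural candidate is the tableau placing $\{1,2\}$ in the first column and $\{3,4,5,6,7\}$ across the top row; for $\lambda = (6,3)$ one arranges $\{1,2,3,4\}$ across the first column and the top entries of the second and third columns, so that each element of $K$ carries $e_t$ to a distinct polytabloid.

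The main obstacle will be verifying the nonvanishing of $N_K \cdot e_t$ in the irreducible quotient $D^\lambda,$ as opposed to merely in the Specht module $S^\lambda.$ I would address this either by a direct analysis of $\mathrm{rad}\,S^\lambda$ (which, for two-part partitions in characteristic $2,$ admits a concrete description via Garnir relations, cf.~\cite{JamesBook}) or, more efficiently, by invoking the known $2$-modular decomposition matrices of $S_7$ and $S_9$ to pin down $\dim D^{(5,2)}$ and $\dim D^{(6,3)},$ and then comparing these with the dimensions of the fixed-point subspaces $(D^\lambda)^K$ and of the image of $N_K,$ whence a dimension count forces the existence of a projective summand.
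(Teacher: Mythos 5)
Your reduction to the base cases $D^{(5,2)}$ and $D^{(6,3)}$ via Lemma~\ref{branching} is exactly what the paper does, and your observation that a projective subquotient of an $\F_2[K]$-module automatically splits off as a direct summand (projective $=$ injective for a group algebra) is correct and is implicitly used by the paper as well. The genuine gap is that the base cases are never actually verified: you identify "the nonvanishing of $N_K\cdot e_t$ in the irreducible quotient $D^\lambda$" as the main obstacle and then offer two strategies for overcoming it without carrying either one out, so the heart of the lemma remains a plan rather than a proof. Moreover, the second strategy as stated is not quite a proof scheme: "comparing with the dimension of the image of $N_K$" presupposes the very computation you are trying to avoid.

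The obstacle you flag in fact dissolves once you push the decomposition-matrix input one step further, which is how the paper proceeds. From the appendix of \cite{JamesBook} one reads off that $D^{(5,2)}=S^{(5,2)}$ and $D^{(6,3)}=S^{(6,3)}$, so there is no radical to quotient by and any computation may be done directly in the Specht module. Better still, no fresh polytabloid computation is needed at all: by the Specht branching rule (Lemma~\ref{spechtbranch}), both $S^{(5,2)}$ and $S^{(6,3)}$ contain $S^{(4,1)}$ as a subquotient upon restriction to $S_5$, and the preceding lemma already established that $S^{(4,1)}=D^{(4,1)}$ restricts to $K$ as the regular representation (since $K$ acts simply transitively on $\{1,2,3,4\}$). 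Your own subquotient-implies-summand principle then finishes the argument. I would encourage you to either adopt this route or, if you prefer the norm-element computation, to actually exhibit the tableau and compute $N_K\cdot e_t$ in $S^{(5,2)}$ and $S^{(6,3)}$; as written, the proposal is incomplete precisely at the step that carries the content of the lemma.
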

		
		\noindent \textbf{Proof:} It suffices to prove it for $D^{(5,2)}$ and $D^{(6,3)}$ as every $D^{(n-2,2)}$ for $n >7$ contains $D^{(5,2)}$ as a composition factor upon restriction to $S_7$, and similarly every $D^{(n-2,2)}$ for $n >7$ contains $D^{(6,3)}$ as a composition factor upon restriction to $S_9$ by Lemma \ref{branching}.
		
		For $S_7$ and $S_9$ the decomposition matrices are known explicitly and we have that $D^{(5,2)} = S^{(5,2)}$ and $D^{(6,3)} = S^{(6,3)}$ (see the appendix of \cite{JamesBook}). For Specht modules the branching rules are given by Lemma \ref{spechtbranch}  and $S^{(5,2)}$ and $S^{(6,3)}$ both contain $S^{(4,1)}$ as a subquotient upon restriction to $S_5$. The result then follows from the previous lemma. $\square$
		
		\medskip
		
		\begin{alemma}
			$D^{(n-k,k)}$ for $k \ge 4$ and $n \ge 2k+1$ is not quadratic when restricted to $K^m \times H_{n-4m}$ for any $m \ge 1$.
			
		\end{alemma}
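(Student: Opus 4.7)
The plan is to find, for each $m \geq 1$, a sufficiently large elementary abelian $2$-subgroup $G''$ contained in both $K^m \times H_{n-4m}$ and $S_{2k}$, and to show that $\ell(D^{(n-k,k)}|_{G''}) \geq 3$. The containment $G'' \subset S_{2k}$ lets us exploit the Nagai-Uno tensor decomposition after branching: by iterated application of Lemma~\ref{branching}, $D^{(n-k,k)}|_{S_{2k+1}}$ contains $D^{(k+1,k)}$ as a composition factor, so $M(2k) := \Res^{S_{2k+1}}_{S_{2k}} D^{(k+1,k)}$ is a subquotient of $D^{(n-k,k)}|_{S_{2k}}$. Since Lowey length does not increase under passing to subquotients or restricting to a subgroup, it suffices to bound $\ell(M(2k)|_{G''}) \geq 3$.

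The crucial input is $\ell(M(4)|_K) = 2$. A short Brauer character calculation (or the decomposition $S^{(3,2)} = D^{(3,2)} + D^{(5)}$ combined with Specht module branching and the identification $S^{(2,2)} = D^{(3,1)}$) shows both $S_4$-composition factors of $M(4) = D^{(3,2)}|_{S_4}$ equal $D^{(3,1)}$. Since $K = V_4$ is the kernel of the quotient $S_4 \twoheadrightarrow S_3$ and $D^{(3,1)}$ lifts the $2$-dimensional irreducible representation of $S_3$, $K$ acts trivially on $D^{(3,1)}$, forcing $\ell(M(4)|_K) \leq 2$. Conversely, $D^{(3,2)}$ is a faithful irrep of $S_5$ (its kernel is a normal subgroup of $S_5$, hence is $\{e\}$, $A_5$, or $S_5$, and the latter two force $\dim \leq 1$), so $K$ acts nontrivially on $M(4)$, giving $\ell(M(4)|_K) \geq 2$. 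Combined with the iterated Nagai-Uno isomorphism $M(2k)|_{S_4^i \times S_{2k-4i}} \cong M(4)^{\otimes i} \otimes M(2k-4i)$ and the identity $\ell(V_1 \otimes \cdots \otimes V_r) = \sum_i \ell(V_i) - (r-1)$ for tensor products of representations of distinct group factors, this lets us estimate $\ell(M(2k)|_{G''})$ for our candidate subgroups.

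We split into cases. If $m \leq \lfloor k/2 \rfloor$, take $G'' = K^m \times H_{2k-4m}$: this lies in $K^m \times H_{n-4m}$ (since $n \geq 2k+1 > 2k$) and in $S_{2k}$ by construction. Then
\[
M(2k)|_{G''} \cong (M(4)|_K)^{\otimes m} \otimes \mathrm{reg}(H_{2k-4m}),
\]
and $\ell(\mathrm{reg}(H_{2k-4m})) = k - 2m + 1$, giving Lowey length $2m + (k-2m+1) - m = k - m + 1 \geq \lceil k/2 \rceil + 1 \geq 3$ for $k \geq 4$. If $m > \lfloor k/2 \rfloor$, take $G'' = K^{\lfloor k/2 \rfloor}$ (the first $\lfloor k/2 \rfloor$ copies of $K$ inside $K^m$); this fits in $S_{4\lfloor k/2 \rfloor} \subset S_{2k}$. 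By Nagai-Uno (for $k$ odd, the extra factor $M(2)|_{\{e\}}$ is trivial and does not affect the Lowey length), $M(2k)|_{G''}$ contains $(M(4)|_K)^{\otimes \lfloor k/2 \rfloor}$ as a tensor factor, giving Lowey length at least $\lfloor k/2 \rfloor + 1 \geq 3$ for $k \geq 4$.

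The main obstacle is arranging both containments $G'' \subset K^m \times H_{n-4m}$ and $G'' \subset S_{2k}$ uniformly across all $m$: for large $m$, $K^m$ alone occupies more than $2k$ points of $\{1,\dots,n\}$, forcing us to drop the $H$-component of $G''$ and rely entirely on the tensor-additive contribution of $\ell(M(4)|_K) = 2$ summed over $\lfloor k/2 \rfloor$ copies of $K$. The hypotheses $k \geq 4$ and $n \geq 2k+1$ are calibrated precisely so that both cases yield Lowey length $\geq 3$.
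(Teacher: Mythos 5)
Your proof is correct, but it takes a genuinely different and considerably more elaborate route than the paper does. The paper's argument is a one-liner built directly on Lemma~\ref{H2kproj}: since $D^{(n-k,k)}|_{H_{2k}}$ has a projective summand, and projective modules over a $2$-group remain projective upon restriction to any subgroup, it suffices to observe that $H_{2k}\cap(K^m\times H_{n-4m})$ has rank $\geq 2$ when $k\geq 4$; a projective module over $(C_2)^r$ has Lowey length $r+1\geq 3$, and we are done. By intersecting with $H_{2k}$ the paper never loses projectivity, so no computation with the Nagai--Uno tensor decomposition is needed at this stage.

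You instead choose a subgroup $G''\subset K^m\times H_{n-4m}$ that is \emph{not} contained in $H_{2k}$, so you must re-run the Nagai--Uno decomposition, compute $\ell(M(4)|_K)=2$ (via the observation that $D^{(3,2)}|_{S_4}$ has both composition factors $D^{(3,1)}$, which is inflated from $S_3$), and then invoke the exact additivity $\ell(V_1\otimes\cdots\otimes V_r)=\sum_i\ell(V_i)-(r-1)$ for modules over a product of $p$-groups. All of these steps check out (and the tensor formula is correct: it follows from $J(k[G_1\times G_2])=J(kG_1)\otimes kG_2+kG_1\otimes J(kG_2)$), so your argument gives a valid, though longer, proof. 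What the paper's approach buys is brevity --- it bypasses the structure of $M(4)|_K$ entirely. What your approach buys is a sharper numerical estimate: you show the Lowey length over $G''$ is at least $k-m+1$ (or $\lfloor k/2\rfloor+1$ in the overflow case), rather than just $\geq 3$. Note also that since the additivity formula transmits lower bounds, the faithfulness argument giving $\ell(M(4)|_K)\geq 2$ would have sufficed; the exact equality is not needed.
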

		
		\noindent \textbf{Proof:} We know by Lemma \ref{H2kproj} these are projective upon restriction to $H_{2k}$, and are therefore projective when restricted to the intersection of $H_{2k}$ with $K^m \times H_{n-4m}$. This intersection has rank at least 2 since $k\ge 4$, and therefore projective objects have Lowey length at least 3.   $\square$
		
		\medskip
		
		\noindent This completes the proof of Theorem \ref{char2}. $\square$

	\subsection{Modifications for $A_n$}
	
	We will now briefly describe what changes if we work with alternating groups instead of symmetric groups, but we will omit some of the details of the calculations.  First a quick summary of the modular representation theory of alternating groups in terms of the theory for symmetric groups:
	
	Upon restriction from $S_n$ to $A_n$, the irreducible representations $D^\lambda$ either remain irreducible, or split as a direct sums $D^\lambda \cong D^{\lambda+} \oplus D^{\lambda-}$ of two irreducible non-isomorphic representations of the same dimension; all irreducible representations of $A_n$ are uniquely obtained this way.  We'll note that in characteristic $p > 2$ this is a standard application of Clifford theory, but in characteristic $2$ it is a difficult theorem of Benson (\cite{BensonSpin} Theorem 1.1).  Moreover it is known exactly which $D^\lambda$ split this way, but we won't go into the combinatorics here.
	
	When $p >2$ the maximum rank abelian $p$-groups in $S_n$ all lie in $A_n$, and the proof of Theorem \ref{charnot2} goes through without modification to give the following theorem.
	
	\begin{customthm}{\ref{charnot2}'}
		Any non-trivial irreducible representation of $A_n$ with $n \ge p$ in characteristic $p>2$ has Lowey length at least $3$ upon restriction to the copy of $C_p$ generated by $(1,2,\dots, p)$, and is therefore not quadratic with respect to any maximal rank elementary abelian $p$-subgroup.
	\end{customthm}
	
	When $p=2$, the difference is more dramatic. It is no longer true that every maximum rank abelian $2$-subgroup of $S_n$ lies in $A_n$, in particular $H_n$ is not a subgroup of $A_n$. Let $\tilde{H}_n$ denote the intersection of $H_n$ and $A_n$, this has rank one less than $H_n$.  The maximal rank elementary abelian 2-subgroups of $A_n$ are as follows:
	
	If $n = 4b$ or $4b+1$ then up to conjugacy the only maximal rank elementary abelian 2-subgroup inside $A_n$ is $K^b$, and it is of rank $2b$.  If $n = 4b+2$ or $4b+3$ then all maximal rank elementary abelian 2-subgroups in $S_n$ still have maximal rank when intersected with $A_n$, and up to conjugacy the maximal rank elementary abelian 2-subgroups inside $A_n$ are of the form:
	
	$$\underbrace{K \times K \times \dots \times K}_{m \text{ times}} \times \tilde{H}_{n-4m} \ \subset \  \underbrace{A_4 \times A_4 \times \dots \times A_4}_{m \text{ times}} \times A_{n-4m} \ \subset \ A_{n}$$
	and these have rank $2b-1$.
	
	The appropriate modification to Theorem \ref{char2} for alternating groups is the following:
	
	\begin{customthm}{\ref{char2}'}
		Suppose $V$ is a non-trivial irreducible representation of $A_n$ with $n \ge 9$ over a field of characteristic $2$ which is quadratic with respect to a maximal rank elementary abelian $2$-subgroup $H$. Then $n \equiv 2 \text{ or } 3$ modulo $4$, $V \cong D^{(n-1,1)}$, and $H$ is conjugate to $\tilde{H}_n$.
	\end{customthm}
	
	The proof of Theorem \ref{char2} mostly goes through in this case. Some additional care is needed to handle the representations $D^{\lambda+}$ and $D^{\lambda-}$ which are not restrictions of irreducible representations of $S_n$, however one simplifying observation is that since $D^{\lambda+}$ and $D^{\lambda-}$ just differ by conjugation by a transposition, they are actually isomorphic to one another upon restriction to a maximal rank elementary abelian $2$-subgroup. We will omit the remaining details though.

\begin{bibsection}

\begin{biblist}
\bib{Atlas}{book}{
	AUTHOR = {Abbot, Rachel},
	AUTHOR = {Bray, John},
	AUTHOR = {Linton, Steve},
	AUTHOR = {Nickerson, Simon},
	AUTHOR = {Norton, Simon},
	AUTHOR = {Parker, Richard},
	AUTHOR = {Suleiman, Ibrahim},
	AUTHOR = {Tripp, Jonathan},
	AUTHOR = {Walsh, Peter},
	AUTHOR = {Wilson, Robert},
	TITLE = {Atlas of finite group representations, v.3, http://brauer.maths.qmul.ac.uk/Atlas/v3/},
	YEAR = {2019}
}

\bib{ACT}{article}{
    AUTHOR = {Allcock, Daniel},
    AUTHOR = {Carlson, James},
    AUTHOR = {Toledo, Domingo},
    TITLE = {The Complex Hyperbolic Geometry of the Moduli Space of Cubic Surfaces},
    JOURNAL = {J. Alg. Geo.},
    VOLUME = {11},
    YEAR = {2002},
    PAGES = {659--754}
}

\bib{AS}{article}{
	AUTHOR = {Arnold, Vladimir},
	AUTHOR = {Shimura, Goro},
	TITLE = {Superpositions of algebraic functions},
	JOURNAL = {Proc. Symposia in Pure Math.},
	VOLUME = {28},
	YEAR = {1976},
	PAGES = {45--46},
}

\bib{Barry}{article}{
    AUTHOR = {Barry, Michael J. J.},
     TITLE = {Large abelian subgroups of {C}hevalley groups},
   JOURNAL = {J. Austral. Math. Soc. Ser. A},
     VOLUME = {27},
      YEAR = {1979},
    NUMBER = {1},
     PAGES = {59--87},
}


\bib{BensonSpin}{article}{
	AUTHOR = {Benson, Dave},
	TITLE = {Spin modules for symmetric groups},
	JOURNAL = {Journal of the London Mathematical Society},
	VOLUME = {38},
	NUMBER = {2},
	YEAR = {1988},
	PAGES = {250-262}
}

\bib{Bra}{article}{
	AUTHOR = {Brauer, Richard},
	TITLE = {On the resolvent problem},
	JOURNAL = {Ann. Mat. Pura Appl.},
	SERIES = {4},
	VOLUME = {102},
	YEAR = {1975},
	PAGES = {45--55},
}

\bib{BFR}{article}{
	AUTHOR = {Brosnan, Patrick},
	AUTHOR = {Fakrhuddin, Najmudden},
	AUTHOR = {Reichstein, Zinovy},
	TITLE = {Fixed points in toroidal compactifications of Shimura varieties and essential dimension of congruence covers},
	JOURNAL = {In preparation}
}

\bib{BR}{article} {
    AUTHOR = {Buhler, J.},
    AUTHOR = {Reichstein, Z.},
     TITLE = {On the essential dimension of a finite group},
   JOURNAL = {Compositio Math.},
    VOLUME = {106},
      YEAR = {1997},
    NUMBER = {2},
     PAGES = {159--179},
     }
\bib{BR2}{article}{
    AUTHOR = {Buhler, J.},
    AUTHOR = {Reichstein, Z.},
     TITLE = {On Tschirnhaus transformations},
   JOURNAL = {Topics in number theory (University Park, PA, 1997)},
   SERIES = {Math. Appl.},
   VOLUME = {467},
   PUBLISHER = {Kluwer Acad. Publ., Dordrecht},
      YEAR = {1999},
    NUMBER = {2},
     PAGES = {127--142},
}

\bib{Burkhardt1}{article}{
LABEL = {Bu1890},
    AUTHOR = {Burkhardt, Heinrich},
    TITLE = {Grundz\"uge einer allgemeinen Systematik der hyperelliptischen Functionen I. Ordnung},
    JOURNAL = {Math. Ann.},
    VOLUME = {35},
    PAGES = {198-296},
    YEAR = {1890}
}

\bib{Burkhardt2}{article}{
LABEL = {Bu1891},
    AUTHOR = {Burkhardt, Heinrich},
    TITLE = {Untersuchungen aus dem Gebiete der hyperelliptischen Modulfunctionen. Zweiter Teil.},
    JOURNAL = {Math. Ann.},
    VOLUME = {38},
    PAGES = {161-224},
    YEAR = {1891}
}

\bib{Burkhardt3}{article}{
LABEL = {Bu1893},
    AUTHOR = {Burkhardt, Heinrich},
    TITLE = {Untersuchungen aus dem Gebiete der hyperelliptischen Modulfunctionen. III.},
    JOURNAL = {Math. Ann.},
    VOLUME = {41},
    PAGES = {313-343},
    YEAR = {1893}
}

\bib{ChICM}{article}{
	AUTHOR = {Chebotarev, N.G.},
	TITLE = {Die Probleme der modernen Galoisschen Theorie},
	JOURNAL = {Proceedings of the ICM},
	YEAR = {1932}
}

\bib{Ch43}{article}{
	AUTHOR = {Chebotarev, N.G.},
	TITLE = {The problem of resolvents and critical manifolds},
	JOURNAL = {Izvestia Akad. Nauk SSSR},
	VOLUME = {7},
	PAGES = {123--146},
	YEAR = {1943}
}

\bib{Di}{article}{
    AUTHOR = {Dickson, L.E.},
    TITLE = {Representations of the General Symmetric Group as Linear Groups in Finite and Infinite Fields},
    JOURNAL = {Trans. AMS},
    VOLUME = {9},
    YEAR = {1908},
    NUMBER = {2},
    PAGES = {121--148}
}

\bib{DO}{book}{
    AUTHOR = {Dolgachev, Igor},
    AUTHOR = {Ortland, David},
    TITLE = {Point Sets in Projective Spaces and Theta Functions},
    JOURNAL = {Ast\'erisque},
    YEAR = {1988}
}

\bib{DR1}{article}{
	AUTHOR = {Duncan, Alexander},
	AUTHOR = {Reichstein, Zinovy},
	TITLE = {Versality of algebraic group actions and rational points on twisted varieties}
	JOURNAL = {J. Alg. Geom.},
	VOLUME = {24},
	YEAR = {2015},
	PAGES = {499--530}
}

\bib{DR2}{article}{
	AUTHOR = {Duncan, Alexander},
	AUTHOR = {Reichstein, Zinovy},
	TITLE = {Pseudo-reflection groups and essential dimension},
	JOURNAL = {J. Lond. Math. Soc.},
	SERIES = {2},
	VOLUME = {90},
	YEAR = {2014},
	NUMBER = {3},
	PAGES = {879--902},
}

\bib{DR}{article}{
    AUTHOR = {Duncan, Alexander},
    AUTHOR = {Reichstein, Zinovy},
    TITLE = {Versality of algebraic group actions and rational points on twisted varieties},
    JOURNAL = {J. Algebraic Geom.},
    VOLUME = {24},
    YEAR = {2015},
    NUMBER = {3},
    PAGES = {499--530},
}

\bib{FC}{book}{
	AUTHOR = {Faltings, Gerd},
	AUTHOR = {Chai, Ching-Li},
	TITLE = {Degeneration of abelian varieties},
	SERIES = {Ergebnisse der Mathematik und ihrer Grenzgebiete (3) [Results
		in Mathematics and Related Areas (3)]},
	VOLUME = {22},
	NOTE = {With an appendix by David Mumford},
	PUBLISHER = {Springer-Verlag, Berlin},
	YEAR = {1990},
	PAGES = {xii+316},
}

\bib{FKW}{article}{
    AUTHOR = {Farb, Benson},
    AUTHOR = {Kisin, Mark},
    AUTHOR = {Wolfson, Jesse},
    TITLE = {Essential dimension of congruence covers},
    JOURNAL = {arXiv:1901.09013},
    YEAR = {2019}
}

\bib{FW}{article}{
    AUTHOR = {Farb, Benson},
    AUTHOR = {Wolfson, Jesse},
    TITLE = {Resolvent degree, Hilbert's 13th Problem and Geometry},
    JOURNAL = {arXiv:1803.04063},
    YEAR = {2018}
}

\bib{Fricke}{book}{
	AUTHOR = {Fricke, Robert},
	TITLE = {Lehrbuch der Algebra},
	VOLUME = {2},
	PUBLISHER = {Viewig und Sohn, Braunschweig},
	YEAR = {1926}
}

\bib{FrickeKlein}{book}{
	AUTHOR = {Fricke, Robert},
	AUTHOR = {Klein, Felix},
	TITLE = {Vorlesungen \"uber die Theorie der automorphen Functionen},
	VOLUME = {1,2},
	PUBLISHER = {Teubner, Berlin},
	YEAR = {1912}
}

\bib{GMS}{book}{
	AUTHOR = {Garibaldi, Skip},
	AUTHOR = {Merkurjev, Alexander},
	AUTHOR = {Serre, J.P.},
	TITLE = {Cohomological invariants in Galois cohomology},
	SERIES = {University Lecture Series},
	VOLUME = {28},
	PUBLISHER = {American Mathematical Society, Providence, RI},
	YEAR = {2003}
}

\bib{vdG}{book}{
	AUTHOR = {van der Geer, Gerard},
	TITLE = {Hilbert Modular Surfaces},
	PUBLISHER = {Springer-Verlag},
	YEAR = {1988},
	PAGES = {ix+291}
}

 \bib{Gordan}{article}{
 	LABEL = {Go1882},
 	AUTHOR = {Gordan, Paul},
 	TITLE = {Ueber Gleichungen siebenten Grades mit einer Gruppe von 168 Substitutionen},
 	JOURNAL = {Math. Ann.},
 	VOLUME = {20},
 	YEAR = {1882},
 	PAGES = {515-530}
 }

\bib{Green}{article}{
	AUTHOR = {Green, J.A.},
	TITLE = {The modular representation algebra of a finite group},
	JOURNAL = {Illinois Journal of Mathematics},
	VOLUME = {6},
	NUMBER = {4},
	YEAR = {1962},
	PAGES = {607-619}
}

\bib{Gr}{book}{
    AUTHOR = {Grove, L.C.},
    TITLE = {Classical groups and Geometric Algebra},
    SERIES = {Graduate Studies in Math.},
    VOLUME = {39},
    PUBLISHER = {AMS},
    YEAR = {2000}
}


\bib{Hermite}{article}{
LABEL = {He1858},
    AUTHOR = {Hermite, C.},
     TITLE = {Sur la r\'esolution de l'equation du cinqui\`eme degr\'e},
   JOURNAL = {Comptes rendus de l'Acad\'emie des Sciences},
    VOLUME = {46},
    PAGES = {508--515},
      YEAR = {1858}
}

\bib{Hi1}{article}{
	LABEL = {Hi1900},
	AUTHOR = {Hilbert, David},
	TITLE = {Mathematical Problems},
	JOURNAL = {Proceedings of the 1900 ICM, English translation reprinted in {\em Bull.} AMS},
	VOLUME = {37},
	YEAR = {2000},
	NUMBER = {4},
	PAGES = {407--436}
}

\bib{Hi2}{article}{
	AUTHOR = {Hilbert, David},
	TITLE = {\"{U}ber die Gleichung neunten Grades},
	JOURNAL = {Math. Ann.},
	VOLUME = {97},
	YEAR = {1927},
	NUMBER = {1},
	PAGES = {243--250}
}

\bib{Hirzebruch5}{article}{
	AUTHOR = {Hirzebruch, Friedrich},
	TITLE = {Hilbert's Modular Group of the Field $\Q(\sqrt{5})$ and the Cubic Diagonal Surface of Clebsch and Klein},
	JOURNAL = {Russ. Math. Surv.},
	VOLUME = {31},
	YEAR = {1976},
	NUMBER = {5},
	PAGES = {153-166}
}

\bib{Hirzebruch77}{article}{
	AUTHOR = {Hirzebruch, Friedrich},
	TITLE = {The Ring of Hilbert Modular Forms for Real Quadratic Fields of Small Discriminant},
	JOURNAL = {Modular functions of one variable, VI (Proc. Second Internat. Conf., Univ. Bonn, Bonn, 1976) Lecture Notes in Math.,},
	VOLUME = {627},
	PUBLISHER = {Springer, Berlin},
	YEAR = {1977},
	PAGES = {287-323}
}

\bib{Hu}{book}{
    AUTHOR = {Hunt, Bruce},
    TITLE = {The geometry of some special arithmetic quotients},
    SERIES = {Springer Lect. Notes in Math.},
    VOLUME = {1637},
    YEAR = {1996}
}

\bib{KarMer}{article}{
    AUTHOR = {Karpenko, Nikita A.},
    AUTHOR = {Merkurjev, Alexander S.},
     TITLE = {Essential dimension of finite {$p$}-groups},
   JOURNAL = {Invent. Math.},
    VOLUME = {172},
      YEAR = {2008},
    NUMBER = {3},
     PAGES = {491--508},
      }

\bib{KleinFirst}{article}{
	LABEL = {Kl1871},
	AUTHOR = {Klein, Felix},
	TITLE = {Ueber eine geometrische Repr\"asentation der Resolventen algebraischer Gleichungen},
	JOURNAL = {Math. Ann.},
	VOLUME = {4},
	PAGES = {346-358},
	YEAR = {1871}
}

\bib{Klein8}{article}{
	LABEL = {Kl1879},
	AUTHOR = {Klein, Felix},
	TITLE = {Ueber die Aufl\"osung gewisser Gleichungen vom siebenten und achten Grade},
	JOURNAL = {Math. Ann.},
	VOLUME = {15},
	PAGES = {252-282},
	YEAR = {1879}
}

\bib{KleinIcos}{book}{
LABEL = {Kl1884},
   AUTHOR = {Klein, Felix},
   TITLE ={Vorlesungen \"uber das Ikosaeder und die Aufl\"osung der Gleichungen vom fünften Grade (Lectures on the Icosahedron and the Solution of the Equation of the Fifth Degree)},
   PUBLISHER = {Leipzig, T\"ubner},
   YEAR = {1884}
}

\bib{Klein87}{article}{
LABEL = {Kl1887},
    AUTHOR = {Klein, Felix},
    TITLE = {Zur Theorie der allgemeinen Gleichungen sechsten und siebenten Grades},
    JOURNAL = {Math. Ann.},
    VOLUME = {28},
    PAGES = {499-532},
    YEAR = {1887}
}

\bib{KleinLetter}{article}{
LABEL = {Kl1888},
    AUTHOR = {Klein, Felix},
    TITLE = {Sur la resolution, par les fonctions hyperelliptiques de l'equation du vingt-septieme degre, de laquelle depend la determination des vingt-sept droites d'une surface cubique},
    JOURNAL = {Journal de Math\'ematiques pures et appliqu\'ees},
    SERIES = {4},
    VOLUME = {4},
    PAGES = {169-176},
    YEAR = {1888}
}

\bib{KleinNU}{book}{
LABEL = {Kl1893},
    AUTHOR = {Klein, Felix},
    TITLE = {Lectures on Mathematics},
    PUBLISHER = {MacMillan and Co.},
    YEAR = {1894}
}

\bib{KleinLast}{article}{
    AUTHOR = {Klein, Felix},
    TITLE = {\"Uber die Aufl\"osung der allgemeinen Gleichungen f\"unften und sechsten Grades},
    JOURNAL = {Journal f\"ur die reine und angewandte Mathematik},
    VOLUME = {129},
    PAGES = {150-174},
    YEAR = {1905}
}

\bib{KleinHist}{book}{
	AUTHOR = {Klein, Felix},
	TITLE = {Vorlesungen uber die Entwicklung der Mathematik im 19. Jahrhundert},
	PUBLISHER = {Springer, Berlin},
	YEAR = {1926},
	PAGES = {xiii+385}
}

\bib{KleinCW}{book}{
    AUTHOR = {Klein, Felix},
    TITLE ={Gesammelte Mathematische Abhandlungen},
    VOLUME = {2},
     PAGES = {255--504},
 PUBLISHER = {Berlin},
      YEAR = {1922}
}

\bib{KleinCW3}{book}{
 	AUTHOR = {Klein, Felix},
	TITLE ={Gesammelte Mathematische Abhandlungen},
	VOLUME = {3},
	PAGES = {3-774},
	PUBLISHER = {Berlin},
	YEAR = {1922}
}

\bib{KleinFricke}{book}{
	LABEL = {KF1892},
 	AUTHOR = {Klein, Felix},
 	AUTHOR = {Fricke, Robert},
	TITLE = {Vorlesungen \"uber die Theorie der elliptischen Modulfunktionen},
	VOLUME = {1,2},
	PUBLISHER = {Teubner, Leipzig},
	YEAR = {1892}
}

\bib{kleshchev}{incollection}{
	AUTHOR = {Kleshchev, A.S.},
	TITLE = {Branching rules for symmetric groups and applications, {\em in Algebraic Groups and their
		Representations (R.W. Carter and J. Saxl eds.), Springer, Dordrecht}},
	YEAR = {1998},
	PAGES = {103-130}
}

\bib{Ko}{article}{
    AUTHOR = {Kondo, Shigeyuki},
    TITLE = {The Segre cubic and Borcherds products},
    JOURNAL =  {Arithmetic and geometry of K3 surfaces and Calabi-Yau threefolds},
    SERIES = {Fields Inst. Commun.},
    VOLUME = {67},
    YEAR = {2013},
    PAGES = {549--565}
}

\bib{Kronecker}{article}{
LABEL = {Kr1861},
    AUTHOR = {Kronecker, Leopold},
    TITLE = {Ueber die Gleichungen f\"unften Grades},
    JOURNAL = {Journal f\"ur die reine und angewandte Mathematik},
    VOLUME = {59},
    PAGES = {306--310},
    YEAR = {1861}
}

\bib{JamesBook}{book}{
	AUTHOR = {James, G.D.},
	TITLE = {The Representation Theory of Symmetric Groups},
	SERIES = {Lecture Notes in Mathematics},
	VOLUME = {682},
	PUBLISHER = {Springer},
	YEAR = {1978}
}

\bib{Lan}{article}{
	AUTHOR = {Landesman, Aaron},
	TITLE = {The Torelli map restricted to the hyperelliptic locus},
	JOURNAL = {arXiv:1911.02084},
	YEAR = {2019}
}

\bib{MerSurv}{article}{
    AUTHOR = {Merkurjev, Alexander},
     TITLE = {Essential dimension},
   JOURNAL = {Bull. AMS},
    VOLUME = {54},
      YEAR = {2017},
    NUMBER = {4},
     PAGES = {635--661},
      }
  
 \bib{MS}{article}{
 	AUTHOR = {Merkurjev, Alexander},
 	AUTHOR = {Suslin, Andrei},
	TITLE = {$K$-Cohomology of Severi-Brauer Varieties and the Norm Residue Homomorphism},
	JOURNAL = {Math. USSR Izv.},
	VOLUME = {21},
	YEAR = {1983},
	NUMBER = {2},
	PAGES = {307--340}
}

\bib{MR}{article}{
    AUTHOR = {Meyer, A.},
    AUTHOR = {Reichstein, Z.},
    TITLE = {The essential dimension of the normalizer of a maximal torus in the projective linear group},
    JOURNAL = {Algebra \& Number Theory},
    VOLUME = {3},
    YEAR = {2009},
    NUMBER = {4},
    PAGES = {467--487}
}

\bib{Okuyama}{article}{
	AUTHOR = {Okuyama, Tetsuro},
	TITLE = {On a certain simple module and cohomology of the symmetric group over $GF(2)$},
	JOURNAL = { Kyoto University Department Bulletin},
	VOLUME = {1581},
	NUMBER = {58-63},
	YEAR = {2008}
}

\bib{JP}{article}{
	AUTHOR = {Peel, M.H.},
	AUTHOR = {James, G.D.},
	TITLE = {Specht series for skew representations of symmetric groups},
	JOURNAL = {Journal of Algebra},
	VOLUME = {56}
	NUMBER = {2}
	PAGES = {343-364},
	YEAR = {1979}
}

\bib{Pr}{article}{
	AUTHOR = {Prokhorov, Yuri},
	TITLE = {Simple finite subgroups of the Cremona group of rank 3},
	JOURNAL = {J. Algebraic Geom.},
	VOLUME = {21},
	YEAR = {2012},
	NUMBER = {3},
	PAGES = {563-600}
}

\bib{ReICM}{article} {
    AUTHOR = {Reichstein, Zinovy},
     TITLE = {Essential Dimension},
    BOOKTITLE ={Proceedings of the International Congress of Mathematicians},
      YEAR = {2010}
}

\bib{ReiYo}{article} {
    AUTHOR = {Reichstein, Zinovy},
    AUTHOR = {Youssin, Boris},
     TITLE = {Essential dimensions of algebraic groups and a resolution
              theorem for {$G$}-varieties},
      NOTE = {With an appendix by J\'{a}nos Koll\'{a}r and Endre Szab\'{o}},
   JOURNAL = {Canad. J. Math.},
    VOLUME = {52},
      YEAR = {2000},
    NUMBER = {5},
     PAGES = {1018--1056},
      }

\bib{SBT}{article}{
    AUTHOR = {Shepherd-Barron, Nick},
    AUTHOR = {Taylor, Richard},
    TITLE = {Mod 2 and Mod 5 Icosahedral Representations},
    JOURNAL = {J. Amer. Math. Soc.},
    VOLUME = {10},
    NUMBER = {2},
    YEAR = {1997},
    PAGES = {283--298}
}

\bib{Sheth}{article}{
	AUTHOR = {Sheth, Jagat},
	TITLE = {Branching rules for two row partitions and applications to the inductive systems for symmetric groups}
	JOURNAL = {Communications in Algebra},
	VOLUME = {27},
	NUMBER = {9},
	PAGES = {3303-3316},
	YEAR = {1999}
}

\bib{Sh}{article}{
    AUTHOR = {Shimura, Goro},
    TITLE = {On purely transcendental fields of automorphic functions of several variables},
    JOURNAL = {Osaka Jour. Math.},
    VOLUME = {1},
    YEAR = {1964},
    PAGES = {1--14}
}

\bib{Ta}{book}{
    AUTHOR = {Taylor, D.E.},
    TITLE = {Geometry of the Classical Groups},
    SERIES = {Sigma Series in Pure Math.},
    VOLUME = {9},
    PUBLISHER = {Helderberg Verlag Berlin},
    YEAR = {1992}
}

\bib{Uno}{article}{
	AUTHOR = {Uno, K.},
	TITLE = {The rank variety of a simple module of a symmetric group}
	JOURNAL = {RIMS Kokyuroku},
	VOLUME = {1251},
	PAGES = {8-15},
	YEAR = {2002}
}
	
\bib{Wa1}{article}{
    AUTHOR = {Wagner, A},
    TITLE = {The Faithful Linear Representation of Least Degree of $S_n$ and $A_n$ over a Field of Characteristic 2},
    JOURNAL = {Math. Z.},
    VOLUME = {151},
     PAGES = {127--137},
      YEAR = {1976}
}

\bib{Wa2}{article}{
AUTHOR = {Wagner, A},
    TITLE = {The Faithful Linear Representation of Least Degree of $S_n$ and $A_n$ over a Field of Odd Characteristic},
    JOURNAL = {Math. Z.},
    VOLUME = {154},
     PAGES = {103--114},
      YEAR = {1977}
}

\bib{W}{article}{
	AUTHOR = {Wolfson, Jesse},
	TITLE = {Tschirnhaus transformations after Hilbert},
	JOURNAL = {Preprint available at https://jpwolfson.com/articles-and-pre-prints/},
}

\end{biblist}
\end{bibsection}
\end{document}